%
\documentclass[12pt, reqno]{amsart}
\usepackage{amsmath, amsthm, amscd, amsfonts, amssymb, graphicx, color}
\usepackage[bookmarksnumbered, colorlinks, plainpages]{hyperref}
\hypersetup{colorlinks=true,linkcolor=red, anchorcolor=green, citecolor=cyan, urlcolor=red, filecolor=magenta, pdftoolbar=true}

\textheight 22.5truecm \textwidth 14.5truecm
\setlength{\oddsidemargin}{0.35in}\setlength{\evensidemargin}{0.35in}

\setlength{\topmargin}{-.5cm}

\newtheorem{theorem}{Theorem}[section]
\newtheorem{lemma}[theorem]{Lemma}
\newtheorem{proposition}[theorem]{Proposition}
\newtheorem{corollary}[theorem]{Corollary}

\newtheorem*{TheoremA}{Theorem A}
\newtheorem*{TheoremB}{Theorem B}

\theoremstyle{definition}
\newtheorem{definition}[theorem]{Definition}

\theoremstyle{remark}

\numberwithin{equation}{section}

\newcommand\C{\mathbb C}
\newcommand\D{\mathbb D}

\newcommand\rtkmu{R^t(K, \mu)}

\renewcommand\i{\infty}
\newcommand\area{{\frak m}}
\newcommand\CT{{\mathcal C}}

\newcommand{\quotes}[1]{``#1''}

\begin{document}

\setcounter{page}{1}

\title[Mean Rational Approximation]{Mean Rational Approximation for Compact Subsets with Thin Boundaries}

\author[J. Conway \MakeLowercase{and} L. Yang]{John B. Conway$^1$ \MakeLowercase{and} Liming Yang$^2$}

\address{$^{1}$Department of Mathematics, The George Washington University, Washington, DC 20052}
\email{\textcolor[rgb]{0.00,0.00,0.84}{conway@gwu.edu}}

\address{$^2$Department of Mathematics, Virginia Polytechnic and State University, Blacksburg, VA 24061.}
\email{\textcolor[rgb]{0.00,0.00,0.84}{yliming@vt.edu}}

\subjclass[2010]{Primary 46E15; Secondary 30C85, 31A15, 47A15, 47B38}

\keywords{Analytic Capacity, Analytic Bounded Point Evaluations, Bounded Point Evaluations, and Mean Rational Approximation}


\begin{abstract} In 1991, J. Thomson \cite{thomson} obtained a celebrated decomposition theorem for $P^t(\mu),$ the closed subspace of $L^t(\mu)$ spanned by the analytic polynomials, when $1 \le t < \i.$ Later, J. Brennan \cite{b08} generalized Thomson's theorem to $R^t(K, \mu),$ the closed subspace of $L^t(\mu)$ spanned by the rational functions with poles off a compact subset $K$ containing the support of $\mu,$ when the diameters of the components of $\mathbb C\setminus K$ are bounded below. We extend the above decomposition theorems for $R^t(K, \mu)$ when the boundary of $K$ is not too \quotes{wild}. 
\end{abstract} \maketitle

\section{\textbf{Introduction}}

For a Borel subset $E$ of the complex plane $\C,$ let $M_0(E)$ denote the set of finite complex-valued Borel measures that are compactly supported in $E$ and let $M_0^+(E)$ be the set of positive measures in $M_0(E).$ The support of $\nu\in M_0(\C),$ $\text{spt}(\nu),$ is the smallest closed set that has full $|\nu|$ measure. 
For a compact subset $K\subset \C,$ $\mu\in M_0^+(K),$ and $1\leq t < \infty$, the analytic polynomials and functions in 
$\mbox{Rat}(K) := \{q:\mbox{$q$ is a rational function 
with poles off $K$}\}$
are members of $L^t(\mu)$. Let $P^t(\mu)$ denote the closure of the (analytic) 
polynomials in $L^t(\mu)$ and let $R^t(K, \mu)$ denote the closure of $\mbox{Rat}(K)$ in $L^t(\mu)$. Denote 
 \[
 \ R^{t,\i}(K, \mu) = R^t(K, \mu) \cap L^\infty(\mu).
 \] 
   
For $z_0\in \C$ and $r> 0,$ let $\D(z_0, r) := \{z:~|z - z_0| < r\}.$
A point $z_0$ in $\mathbb{C}$ is called a \textit{bounded point evaluation} for $P^t(\mu)$ (resp., $R^t(K, \mu)$)
if $f\mapsto f(z_0)$ defines a bounded linear functional, denoted $\phi_{z_0},$ for the analytic polynomials (resp., functions in $\mbox{Rat}(K)$)
with respect to the $L^t(\mu)$ norm. The collection of all such points is denoted $\mbox{bpe}(P^t(\mu))$ 
(resp., $\mbox{bpe}(R^t(K, \mu)$)).  If $z_0$ is in the interior of $\mbox{bpe}(P^t(\mu))$ (resp., $\mbox{bpe}(R^t(K, \mu)$)) 
and there exist positive constants $r$ and $M$ such that $|f(z)| \leq M\|f\|_{L^t(\mu)}$, whenever $z\in \D(z_0, r)$ 
and $f$ is an analytic polynomial (resp., $f\in \mbox{Rat}(K)$), then we say that $z_0$ is an 
\textit{analytic bounded point evaluation} for $P^t(\mu)$ (resp., $R^t(K, \mu)$). The collection of all such 
points is denoted $\mbox{abpe}(P^t(\mu))$ (resp., $\mbox{abpe}(R^t(K, \mu))$). Actually, it follows from Thomson's Theorem
\cite{thomson} (or see Theorem \ref{thmthom}, below) that $\mbox{abpe}(P^t(\mu))$ is the interior of $\mbox{bpe}(P^t(\mu))$. 
This also holds in the context of $R^t(K, \mu)$ as was shown by J. Conway and N. Elias in \cite{ce93}. Now, 
$\mbox{abpe}(P^t(\mu))$ is the largest open subset of $\mathbb{C}$ to which every function $f\in P^t(\mu)$ has an analytic 
continuation 
$\rho(f)(z) := \phi_z (f)$ for $z\in \mbox{abpe}(P^t(\mu)),$
that is, 
$f(z) = \rho(f)(z) ~ \mu|_{\mbox{abpe}(P^t(\mu))}-a.a.,$
and similarly in the context of $R^t(K, \mu)$. The map $\rho$ is called the evaluation map.

Let $S_\mu$ denote the multiplication by $z$ on $P^t(\mu)$ (resp., $R^t(K, \mu)$).
The operator $S_\mu$ is pure if $P^t(\mu)$ (resp., $R^t(K, \mu)$) does not have a direct $L^t$ summand,
  and is irreducible if $P^t(\mu)$ (resp., $R^t(K, \mu)$) contains no non-trivial characteristic functions.
  For an open subset $U \subset \C,$ we let $H^\infty(U)$ denote the bounded analytic functions on $U$.
This is a Banach algebra in the maximum modulus norm, but it is important to endow it also with a weak-star topology.

We shall use $\overline {E}$ to denote the closure of the set $E\subset \C$ and $\text{int}(E)$ to denote the interior of $E.$ Our story begins with celebrated results of J. Thomson, in \cite{thomson}. 

\begin{theorem} (Thomson 1991)
\label{thmthom}
Let $\mu\in M_0^+(\C)$ and suppose that $1\leq t < \infty$.
Then there is a Borel partition $\{\Delta_i\}_{i=0}^\infty$ of $\mbox{spt}(\mu)$ such that 
\[
 \ P^t(\mu ) = L^t(\mu |_{\Delta_0})\oplus \bigoplus _{i = 1}^\infty P^t(\mu |_{\Delta_i})
 \]
and the following statements are true:

(a) If $i \ge 1$, then $S_{\mu |_{\Delta_i}}$ on $P^t(\mu |_{\Delta_i})$ is irreducible.

(b) If $i \ge 1$ and $U_i :=abpe( P^t(\mu |_{\Delta_i}))$, then $U_i$ is a simply connected region and $\Delta_i\subset \overline{U_i}$.

(c) If $S_\mu$ is pure (that is, if $\Delta_0 = \emptyset$) and $U = \text{abpe}(P^t(\mu))$, then the evaluation map $\rho$ is an isometric isomorphism and a weak-star homeomorphism from $P^t(\mu) \cap L^\infty(\mu)$ onto $H^\infty(U)$. 
\end{theorem}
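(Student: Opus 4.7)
The plan is three-fold: construct the Borel partition via idempotents in $P^t(\mu)$, put analytic structure on each irreducible piece via Thomson's deep analytic capacity argument, and then identify the $L^\infty$ part of a pure $P^t(\mu)$ with $H^\infty$ of its analytic bounded point evaluation set.

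For the decomposition, I would consider the collection $\mathcal B$ of Borel sets $E\subseteq\mathrm{spt}(\mu)$ such that $\chi_E\in P^t(\mu)$; this forms a Boolean algebra under set operations (modulo $\mu$-null sets), whose atoms furnish a countable Borel partition $\{\Delta_i\}_{i\ge 0}$ of $\mathrm{spt}(\mu)$ with $P^t(\mu)=\bigoplus_i P^t(\mu|_{\Delta_i})$. Gathering into $\Delta_0$ those pieces on which $P^t(\mu|_{\Delta_i})=L^t(\mu|_{\Delta_i})$, each remaining $P^t(\mu|_{\Delta_i})$ is irreducible by construction, which is (a).

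For (b), the key is first to show that if $P^t(\nu)$ is irreducible, then $U:=\mathrm{abpe}(P^t(\nu))$ is non-empty. This is the main obstacle and is essentially Thomson's original construction: starting from a non-trivial $L^{t'}(\nu)$ annihilator of the polynomials supplied by Hahn--Banach, an intricate iterative scheme on dyadic squares, combined with semi-additivity of analytic capacity (available at the time via Lipschitz-graph and curvature techniques), manufactures a \emph{heavy} point $z_0$ around which a disc of uniform bounded point evaluations exists. Granted that $U_i$ is non-empty, simple connectedness follows by contradiction: a bounded component $V$ of $\C\setminus U_i$ would, via a Vitushkin-style localization of the Cauchy transform applied to a smooth cutoff supported near $V$, yield a non-trivial idempotent in $P^t(\mu|_{\Delta_i})$, contradicting irreducibility. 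The inclusion $\Delta_i\subseteq\overline{U_i}$ is analogous: otherwise a piece of $\mu|_{\Delta_i}$ lives at positive distance from $\overline{U_i}$ and can be separated off by an idempotent, again violating irreducibility.

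For (c), with $S_\mu$ pure and $U=\mathrm{abpe}(P^t(\mu))$, define $\rho(f)(z):=\phi_z(f)$; analyticity on $U$ follows from local uniform boundedness of $z\mapsto\phi_z$ together with Morera's theorem, so $\rho$ is a contractive algebra homomorphism into $H^\infty(U)$. To see $\rho$ is isometric on $P^t(\mu)\cap L^\infty(\mu)$, one uses that $\mathrm{spt}(\mu)\subseteq\overline{U}$ (from (b)) together with the fact that $\mu$-essential values of $f$ can be recovered as non-tangential boundary limits of $\rho(f)$, which forces $\|\rho(f)\|_{H^\infty(U)}=\|f\|_{L^\infty(\mu)}$. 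Surjectivity rests on approximating each $g\in H^\infty(U)$ weak-star by rational functions in $\mathrm{Rat}(\overline U)\subset P^t(\mu)\cap L^\infty(\mu)$ with controlled $L^\infty$-norms; a weak-star compactness argument in $P^t(\mu)\cap L^\infty(\mu)$ then produces a preimage $f$ of $g$. The weak-star homeomorphism statement follows from the isometry combined with the bounded-weak-star principle, completing the proof.
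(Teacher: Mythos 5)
The paper does not prove Theorem~\ref{thmthom}; it is quoted from Thomson \cite{thomson} as a background result, so there is no internal proof to check your sketch against. Assessed against what Thomson's 1991 argument actually is, your sketch has a substantive conceptual error in exactly the part you identify as ``the main obstacle.'' You assert that the existence of a non-empty $\mathrm{abpe}$ for an irreducible $P^t(\nu)$ rests on ``semi-additivity of analytic capacity (available at the time via Lipschitz-graph and curvature techniques).'' This is historically false: semi-additivity of $\gamma$ was one of the central open problems of the subject in 1991 and was resolved by Tolsa only in 2003 (the present paper records this in Theorem~\ref{TolsaTheorem}(2), explicitly dated 2003). A defining achievement of Thomson's paper is precisely that his coloring scheme on dyadic squares --- the heavy-point construction, the marking of light/heavy squares, and the sequence-of-generations combinatorics --- manufactures the capacity estimates it needs \emph{without} semi-additivity of $\gamma$. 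A sketch that takes semi-additivity as a 1991 background fact elides the core difficulty, and were one to accept it one would still need the full coloring apparatus to locate the heavy point; semi-additivity alone does not produce a bounded point evaluation.

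A second anachronism appears in your treatment of (c). You propose to establish that $\rho$ is isometric by recovering $\mu$-essential values of $f$ as non-tangential boundary limits of $\rho(f)$; that is the Aleman--Richter--Sundberg theorem, the paper's reference \cite{ars02} (Annals 2009), which postdates Thomson by eighteen years and is in fact proved \emph{using} Thomson's theorem. Thomson's route --- and the one this paper uses in its own proof of Theorem~A(d) --- is the elementary power trick: from $|\rho(f)(\lambda)| \le C_\lambda^{1/n}\,\|f^n\|_{L^t(\mu)}^{1/n}$ let $n\to\infty$ to get $\|\rho(f)\|_U \le \|f\|_{L^\infty(\mu)}$, and run the same trick in the other direction once surjectivity with a uniform norm bound is in hand; no boundary-value theory is required. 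The rest of your architecture --- Boolean algebra of idempotents for the decomposition, extracting idempotents from bounded complementary components of $U_i$ or from mass at positive distance from $\overline{U_i}$, Krein--Smulian for the weak-star homeomorphism --- is consistent with the standard outline, though it silently passes over non-trivial steps such as why the idempotent algebra is atomic modulo the $L^t$ summand and why polynomials (rather than $\mathrm{Rat}(\overline U)$, which you invoke) are weak-star dense in $H^\infty(U_i)$; the latter uses simple connectedness of $U_i$ together with a Runge-type argument and must be stated carefully since you only have polynomials at your disposal in $P^t(\mu)$.
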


The next result is due to Brennan \cite{b08}, which extends Theorem \ref{thmthom}.

\begin{theorem} (Brennan 2008)
\label{thmBCE}
 Let $\mu\in M_0^+(K)$ for a compact set $K$.
 Suppose that $1\le t < \infty$ and the diameters of the components of $\mathbb C\setminus K$ are bounded below. 
Then there is a Borel partition $\{\Delta_i\}_{i=0}^\infty$ of $\mbox{spt}(\mu)$ and compact subsets $\{K_i\}_{i=1}^\infty$ such that $\Delta_i \subset K_i$ for $i \ge 1$,
 \[
 \ R^t(K, \mu ) = L^t(\mu |_{\Delta_0})\oplus \bigoplus _{i = 1}^\infty R^t(K_i, \mu |_{\Delta_i}),
 \]
and the following statements are true:

(a) If $i \ge 1$, then $S_{\mu |_{\Delta_i}}$ on $R^t(K_i, \mu |_{\Delta_i})$ is irreducible.

(b) If  $i \ge 1$ and $U_i :=\mbox{abpe}( R^t(K_i, \mu |_{\Delta_i}))$, then $U_i$ is connected and $K_i = \overline{U_i}$.

(c) If  $i \ge 1$, then the evaluation map $\rho_i$ is an isometric isomorphism and a weak-star homeomorphism from $R^{t,\i}(K_i, \mu |_{\Delta_i})$ onto $H^\infty(U_i)$.
\end{theorem}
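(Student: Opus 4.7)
The plan is to deduce this from Thomson's theorem (Theorem \ref{thmthom}) by producing enough non-trivial characteristic functions in $R^t(K,\mu)$ to split off all connected components of the set of analytic bounded point evaluations, and then to invoke Thomson-type arguments on each piece. After peeling off a maximal $L^t$ summand as $L^t(\mu|_{\Delta_0})$ we may assume $S_\mu$ is pure. By the Conway--Elias result quoted in the introduction, $U := \mbox{abpe}(R^t(K,\mu))$ coincides with the interior of $\mbox{bpe}(R^t(K,\mu))$. Let $\{U_i\}_{i \ge 1}$ be its connected components and set $K_i := \overline{U_i}$.

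The heart of the argument is to show $\chi_{\overline{U_i} \cap \mbox{spt}(\mu)} \in R^t(K,\mu)$ for every $i$. Here the hypothesis that the diameters of components of $\mathbb{C} \setminus K$ are bounded below by some $\delta > 0$ is essential. My approach would adapt Thomson's original construction from \cite{thomson} to the rational-function setting: using Vitushkin-type localization of the Cauchy transform together with analytic-capacity estimates, one builds, for each $i$, a sequence in $\mbox{Rat}(K)$ converging in $L^t(\mu)$-norm to the indicator. The lower bound $\delta$ ensures that Vitushkin-style localization operators have uniformly bounded norms on $L^t(\mu)$, and this uniform control is precisely what lets one run Thomson's inductive dyadic coloring scheme in the rational setting, square by square, because on every sufficiently small square one may choose poles inside a neighboring hole to play the role Thomson's polynomials played. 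This step is the main obstacle: reproducing Thomson's intricate analytic-capacity estimates with rational rather than polynomial approximation, and verifying that the $\delta$-separation of holes is enough to close the induction.

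Granted the idempotents, the partition $\{\Delta_i\}_{i \ge 1}$ of $\mbox{spt}(\mu) \setminus \Delta_0$ is read off from their supports, and the orthogonal direct-sum decomposition $R^t(K,\mu) = L^t(\mu|_{\Delta_0}) \oplus \bigoplus_{i \ge 1} R^t(K_i, \mu|_{\Delta_i})$ follows. Statement (a) holds because any further non-trivial characteristic function in some $R^t(K_i, \mu|_{\Delta_i})$ would refine the partition, contradicting maximality of the decomposition. For (b), $U_i$ is connected by definition, and $K_i = \overline{U_i}$ since on the one hand $U_i \subset K$ forces $\overline{U_i} \subset K$, and on the other any point of $\mbox{spt}(\mu|_{\Delta_i})$ lying outside $\overline{U_i}$ would yield a further characteristic-function splitting, again contradicting irreducibility. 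Finally, (c) is proved as in Thomson's theorem part (c): the map $\rho_i$ is a contractive unital algebra homomorphism, and surjectivity together with the isometric and weak-star properties follow by approximating any $g \in H^\infty(U_i)$ weak-star by elements of $\mbox{Rat}(K_i) = \mbox{Rat}(\overline{U_i})$ (available because $K_i = \overline{U_i}$) and transferring the approximation through the abpe structure on $R^{t,\i}(K_i, \mu|_{\Delta_i})$.
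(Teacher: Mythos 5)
You should note first that the paper does not prove Theorem \ref{thmBCE}: it is quoted from Brennan \cite{b08} with no proof, and the paper's own contribution is the strictly stronger Theorem B, deduced from Theorem A, which recovers Brennan's result because a lower bound on the diameters of the complementary components forces $\partial K = \partial_1 K$, hence $\gamma(\partial K \setminus \partial_1 K) = 0$. The paper's route is quite different from your sketch. It builds the non-removable boundary $\mathcal F = \mathcal F_0 \cup \mathcal F_+ \cup \mathcal F_-$ out of Cauchy transforms of annihilating measures together with the Plemelj-type formula of Theorem \ref{GPTheorem1}, shows every compact subset of $\mathcal F$ of positive analytic capacity supports a building-block function in $R^{t,\i}(K,\mu)$ (Lemma \ref{BBFRLambda}), identifies $\text{abpe}(R^t(K,\mu))$ with $\text{int}(K) \cap \mathcal R$ up to a $\gamma$-null set (Theorem \ref{ABPETheoremRT}), and then runs Paramonov's refinement of the Vitushkin scheme (Lemmas \ref{BasicEstimate3}--\ref{BasicEstimate2}) inside Lemma \ref{ImRhoOnto2} to realize every $f \in H^\infty(U_i)$ as $\rho_i(\tilde f)$ for some $\tilde f \in R^{t,\i}(K,\mu)$; the idempotents you need are just the special case $f = \chi_{U_i}$. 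Your sketch, re-running Thomson's dyadic coloring with poles placed in nearby holes, is closer to Brennan's original argument, but it is intrinsically tied to the diameter hypothesis and does not reach the thin-boundary generality the paper is after.

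Beyond the obstacle you explicitly flag, two of the steps you treat as routine are not actually arguments. For (a), a nontrivial characteristic function in one of the summands does not ``contradict maximality'': your decomposition is indexed by the connected components of $\text{abpe}(R^t(K,\mu))$, not constructed by any maximality principle, so there is nothing a priori that forbids further idempotents. The correct reasoning (used by the paper in its proof of Theorem A (b)) is that $\rho_i(\chi_B)$ is an analytic idempotent on the connected open set $U_i$, hence identically $0$ or $1$, and that $\rho_i$ is injective because $S_{\mu|_{\Delta_i}}$ is pure; you do not establish injectivity of $\rho_i$. For (c), surjectivity of $\rho_i$ cannot be obtained by ``approximating $g \in H^\infty(U_i)$ weak-star by elements of $\text{Rat}(K_i) = \text{Rat}(\overline{U_i})$'': for a multiply connected or non-nice $U_i$ the rational functions with poles off $\overline{U_i}$ need not be weak-star dense in $H^\infty(U_i)$, and in both Thomson's and Brennan's theorems surjectivity is precisely where the Vitushkin-type construction does its work, not a formal density statement. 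You should fold the surjectivity in (c) into the same main obstacle as the construction of the idempotents, rather than present it as a consequence of the rest.
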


If $B \subset\C$ is a compact subset, then  we
define the analytic capacity of $B$ by
\[
\ \gamma(B) = \sup |f'(\infty)|,
\]
where the supremum is taken over all those functions $f$ that are analytic in $\mathbb C_{\infty} \setminus B$ ($\mathbb{C}_\infty := \mathbb{C} \cup \{\infty \}$) such that
$|f(z)| \le 1$ for all $z \in \mathbb{C}_\infty \setminus B$; and
$f'(\infty) := \lim _{z \rightarrow \infty} z[f(z) - f(\infty)].$
The analytic capacity of a general subset $E$ of $\mathbb{C}$ is given by: 
$\gamma (E) = \sup \{\gamma (B) : B\subset E \text{ compact}\}.
$
We will write $\gamma-a.a.$, for a property that holds everywhere, except possibly on a set of analytic capacity zero.
Good sources for basic information about analytic
capacity are Chapter VIII of \cite{gamelin}, Chapter V of \cite{conway}, and \cite{Tol14}.

We let $\partial_e K$ (the exterior boundary of $K$) denote the union of
   the boundaries of all the components of $\C \setminus K.$ Define
\begin{eqnarray}\label{BOne}
 \ \partial_1 K = \left \{\lambda \in K:~ \underset{\delta\rightarrow 0}{\overline\lim} \dfrac{\gamma(\D(\lambda,\delta)\setminus K)}{\delta} > 0 \right \}.
 \end{eqnarray}
Obviously, $\partial_e K \subset \partial_1 K \subset \partial K.$ If the diameters of  the components of $\C\setminus K$ are bounded below, then there exist $\epsilon_0 > 0$ and $\delta_0 > 0$ such that for each $\lambda \in \partial K,$
\begin{eqnarray}\label{CDiamBBEqn}
\ \gamma (\D(\lambda,\delta)\setminus K) \ge \epsilon_0 \delta \text{ for }\delta < \delta_0.
\end{eqnarray}
Clearly, if $K$ satisfies \eqref{CDiamBBEqn}, then $\partial K = \partial_1 K.$ Conversely,  it is straightforward to construct a compact subset $K$ such that $\partial K = \partial_1 K$ and $K$ does not satisfy \eqref{CDiamBBEqn}.

In \cite[Section 5]{cy22}, Conway and Yang introduced a concept of non-removable boundary for $R^t(K, \mu )$ to identify $R^{t,\i}(K, \mu)$ with a sub-algebra of $H^\i(\text{int}(K))$ when $K$ is a string of beads set (see \cite[Main Theorem]{cy22}). In this paper, we extend the concept of non-removable boundary, denoted $\mathcal F,$ to an arbitrary compact subset $K$ and $\mu\in M_0^+(K)$ (see Definition \ref{FRDef}  and Definition \ref{FRForRtDef}). Intuitively, the non-removable boundary $\mathcal F$ splits into three sets, $\mathcal F_0$ and $\mathcal F_+ \cup \mathcal F_-$ such that
(1) Cauchy transforms of annihilating measures $g\mu$ ($g\perp R^t(K, \mu)$) are zero on $\mathcal F_0$ and (2) Cauchy transforms of annihilating measures $g\mu$ have zero \quotes{one side non-tangential limits} on $\mathcal F_+ \cup \mathcal F_-.$
We show in Proposition \ref{RTFRProp2} that 
\begin{eqnarray}\label{BOneSubsetF}
\ \partial _1 K \subset \mathcal F, ~\gamma-a.a..
\end{eqnarray}

The aim of this paper is to prove the following main theorem.

\begin{TheoremA}\label{thmA}
 Let $K\subset \C$ be a compact subset, $1\le t < \infty,$ and $\mu\in M_0^+(K).$ Let $\text{abpe}(\rtkmu) = \cup_{i=1}^\i U_i,$ where $U_i$ is a connected component.   
If $\partial U_i \cap \partial  K \subset \mathcal F, ~\gamma-a.a.$ for all $i \ge 1,$
then there is a Borel partition $\{\Delta_i\}_{i=0}^\infty$ of $\mbox{spt}(\mu)$ and compact subsets $\{K_i\}_{i=0}^\infty$ such that $\Delta_i \subset K_i$ for $i \ge 0$,
 \[
 \ R^t(K, \mu ) = R^t(K_0,\mu |_{\Delta_0})\oplus \bigoplus _{i = 1}^\infty R^t(K_i, \mu |_{\Delta_i}),
 \]
and the following properties hold:

(a) $K_0$ is the spectrum of $S_{\mu |_{\Delta_0}}$ and $\text{abpe}(R^t(K_0,\mu |_{\Delta_0})) = \emptyset.$

(b) If $i \ge 1$, then $S_{\mu |_{\Delta_i}}$ on $R^t(K_i, \mu |_{\Delta_i})$ is irreducible.

(c) If  $i \ge 1,$ then $U_i =\mbox{abpe}( R^t(K_i, \mu |_{\Delta_i}))$ and $K_i = \overline{U_i}$.

(d) If  $i \ge 1$, then the evaluation map $\rho_i: f\rightarrow f|_{U_i}$ is an isometric isomorphism and a weak-star homeomorphism from $R^{t, \i}(K_i, \mu |_{\Delta_i})$ onto $H^\infty(U_i)$.
\end{TheoremA}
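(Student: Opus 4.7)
The plan is to adapt the decomposition scheme of Thomson and Brennan, with the diameter condition \eqref{CDiamBBEqn} replaced throughout by the hypothesis $\partial U_i \cap \partial K \subset \mathcal F$, $\gamma$-a.a. For each component $U_i$ of $\mbox{abpe}(\rtkmu)$ with $i \ge 1$, I will construct an idempotent $e_i \in R^{t,\i}(K,\mu)$ whose image under the evaluation map $\rho$ is the characteristic function $\chi_{U_i}$. Setting $\Delta_i := \{e_i = 1\}$ and $K_i := \overline{U_i}$ for $i \ge 1$, $\Delta_0 := \text{spt}(\mu)\setminus \bigcup_{i\ge 1}\Delta_i$, and $K_0$ as the spectrum of $S_{\mu|_{\Delta_0}}$, mutual orthogonality of the $e_i$ will yield the direct sum decomposition of $\rtkmu$.

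The heart of the argument is producing the $e_i$, or more generally, showing that every $f \in H^\i(U_i)$ lies in the range of $\rho$ restricted to $R^{t,\i}(K,\mu)$. By a duality argument coupled with Plemelj's formula and Tolsa's results on analytic capacity \cite{Tol14}, this reduces to showing that for every annihilating measure $g\mu \perp \rtkmu$, the Cauchy transform $\CT(g\mu)$ extends analytically across $\partial U_i$ from the complement of $\overline{U_i}$. Since $\CT(g\mu)$ is automatically analytic on $\C \setminus \text{spt}(\mu)$ and on each component of $\text{int}(K)$ not contained in $\overline{U_i}$, the only potential obstruction lies on $\partial U_i \cap \partial K$. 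Here the hypothesis intervenes decisively: since this set is $\gamma$-a.a.\ contained in $\mathcal F = \mathcal F_0 \cup \mathcal F_+ \cup \mathcal F_-$, the defining properties of $\mathcal F$ force $\CT(g\mu)$ to vanish $\gamma$-a.e.\ on $\mathcal F_0$ and to have vanishing appropriate one-sided non-tangential limits on $\mathcal F_\pm$, which is precisely the input required for the analytic continuation. Specializing $f$ to $\chi_{U_i}$ then produces the idempotent.

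With the $e_i$ in place, properties (a)--(d) largely localize to each summand. Property (a) is built into the choice of $\Delta_0$ and $K_0$. Property (b) follows because any further idempotent in $R^t(K_i, \mu|_{\Delta_i})$ would correspond to a proper subdecomposition of the connected component $U_i$, contradicting its connectedness. For (c), $K_i = \overline{U_i}$ is a definition, and $U_i = \mbox{abpe}(R^t(K_i, \mu|_{\Delta_i}))$ follows from the description of $\mbox{abpe}$ in Conway--Elias \cite{ce93} together with the fact that $\rho_i$ evaluates at every point of $U_i$. Property (d) is the full surjectivity statement for $\rho_i$, which comes from the same Cauchy transform and analytic continuation argument as the idempotent construction. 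The principal obstacle throughout is that analytic continuation step: coordinating the three subsets $\mathcal F_0, \mathcal F_+, \mathcal F_-$ with the global hypothesis on $\partial U_i \cap \partial K$ to realize every $f \in H^\i(U_i)$ as a weak-star limit of rational functions from $\mbox{Rat}(K)$ in $L^\i(\mu)$.
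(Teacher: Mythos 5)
Your high-level plan (construct idempotents $e_i \in R^{t,\infty}(K,\mu)$ realizing $\chi_{U_i}$, then localize) matches the paper's strategy, and your treatment of (a)--(d) once the $e_i$ exist is essentially correct. However, there are two genuine gaps, one of which is central.

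First, the claimed reduction is wrong. You assert that producing $e_i$ ``reduces to showing that for every annihilating measure $g\mu$, the Cauchy transform $\mathcal C(g\mu)$ extends analytically across $\partial U_i$ from the complement of $\overline{U_i}$.'' This is not what happens, and it is generally false. On $\mathcal F_+ \cup \mathcal F_-$, only one one-sided nontangential limit of $\mathcal C(g\mu)$ vanishes; the other is generically nonzero (equal to $\pm h g/L$ by Plemelj), so $\mathcal C(g\mu)$ does not continue analytically across these pieces of $\partial U_i$. What the hypothesis $\partial U_i \subset \mathcal F$ actually buys is the possibility of constructing, for each compact $E_1 \subset \mathcal F$, a ``building block'' function (Lemma \ref{BBFRLambda}) $f \in R^{t,\infty}(K,\mu)$ with $f = \mathcal C(\eta)$ off $E_1$, $f'(\infty) = -\gamma(E_1)$, $\|f\|_\infty$ bounded, and the multiplicative identity $\mathcal C(\eta)\mathcal C(g_j\mu) = \mathcal C(fg_j\mu)$. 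One then assembles these local blocks into $\tilde f$ via the modified Vitushkin scheme of Paramonov (Lemmas \ref{BasicEstimate3}--\ref{BasicEstimate2}), where the delicate point is controlling $\sum_j c_2(a_{ij}, c_{ij})$ group by group rather than coefficient by coefficient. This approximation-theoretic machinery is the heart of the paper (Lemma \ref{ImRhoOnto2}), and your proposal has no substitute for it: duality plus Plemelj alone does not produce $\tilde f \in R^{t,\infty}(K,\mu)$ with $f\,\mathcal C(g\mu) = \mathcal C(\tilde f g\mu)$ a.e.

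Second, your reasoning that ``the only potential obstruction lies on $\partial U_i \cap \partial K$'' because $\mathcal C(g\mu)$ is ``automatically analytic'' elsewhere is incorrect: $\partial U_i \cap \text{int}(K)$ may well lie inside $\text{spt}(\mu)$, where nothing is automatic. The correct step is to invoke the abpe characterization $\text{abpe}(R^t(K,\mu)) \approx \text{int}(K)\cap\mathcal R$ (Theorem \ref{ABPETheoremRT}) to conclude $\partial U_i \cap \text{int}(K)\subset\mathcal F$ $\gamma$-a.a., and then combine with the hypothesis to get $\partial U_i \subset \mathcal F$ $\gamma$-a.a. You should also note the preliminary reduction to the pure case (splitting off $L^t(\mu|_{\Delta_{00}})$ and checking that $\mathcal F$ and $\text{abpe}$ are unaffected), which is needed before idempotents in $R^{t,\infty}$ can be identified with characteristic functions.
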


Consequently, by \eqref{BOneSubsetF}, we prove the following theorem in section 6, which extends both Theorem \ref{thmthom} and Theorem \ref{thmBCE}.

\begin{TheoremB}\label{thmB}
 Let $K$ be a compact set such that $\gamma(\partial K \setminus \partial_1 K) = 0$.
 Suppose that $1\le t < \infty$ and $\mu\in M_0^+(K).$ 
Then there is a Borel partition $\{\Delta_i\}_{i=0}^\infty$ of $\mbox{spt}(\mu)$ and compact subsets $\{K_i\}_{i=1}^\infty$ such that $\Delta_i \subset K_i$ for $i \ge 1$,
 \[
 \ R^t(K, \mu ) = L^t(\mu |_{\Delta_0})\oplus \bigoplus _{i = 1}^\infty R^t(K_i, \mu |_{\Delta_i}),
 \]
and the following statements are true:

(a) If $i \ge 1$, then $S_{\mu |_{\Delta_i}}$ on $R^t(K_i, \mu |_{\Delta_i})$ is irreducible.

(b) If  $i \ge 1$ and $U_i :=\mbox{abpe}( R^t(K_i, \mu |_{\Delta_i}))$, then $U_i$ is connected and $K_i = \overline{U_i}$.

(c) If  $i \ge 1$, then the evaluation map $\rho_i: f\rightarrow f|_{U_i}$ is an isometric isomorphism and a weak-star homeomorphism from $R^{t, \i}(K_i, \mu |_{\Delta_i})$ onto $H^\infty(U_i)$.
\end{TheoremB}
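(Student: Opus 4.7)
The plan is to reduce Theorem B to Theorem A, using the stronger hypothesis $\gamma(\partial K\setminus\partial_1 K)=0$ in two ways: to verify the hypothesis of Theorem A at the outset, and to identify the residual summand $R^t(K_0,\mu|_{\Delta_0})$ furnished by Theorem A with the full $L^t(\mu|_{\Delta_0})$.

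\emph{Step 1: verifying the hypothesis.} By \eqref{BOneSubsetF} (Proposition \ref{RTFRProp2}) there is a set $N_1$ with $\gamma(N_1)=0$ such that $\partial_1 K\setminus N_1\subset\mathcal F$. Setting $N_2:=\partial K\setminus\partial_1 K$, the standing hypothesis gives $\gamma(N_2)=0$, so Tolsa's countable semiadditivity of analytic capacity yields $\gamma(N_1\cup N_2)=0$, whence $\partial K\subset\mathcal F$ $\gamma$-almost everywhere. For every component $U_i$ of $\mbox{abpe}(\rtkmu)$ this gives $\partial U_i\cap\partial K\subset\partial K\subset\mathcal F$ $\gamma$-a.a., so Theorem A applies and produces the Borel partition $\{\Delta_i\}_{i=0}^\i$, compact sets $\{K_i\}_{i=0}^\i$, and the decomposition
\[
\rtkmu=R^t(K_0,\mu|_{\Delta_0})\oplus\bigoplus_{i=1}^\i R^t(K_i,\mu|_{\Delta_i}),
\]
with parts (b)--(d) of Theorem A translating verbatim into assertions (a)--(c) of Theorem B for $i\ge 1$.

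\emph{Step 2: upgrading the residual summand.} It remains to establish $R^t(K_0,\mu|_{\Delta_0})=L^t(\mu|_{\Delta_0})$. Set $\nu:=\mu|_{\Delta_0}$. By Hahn--Banach duality it suffices to show that every $g$ in the dual space annihilating $\mbox{Rat}(K_0)$ is zero. The Cauchy transform $\CT(g\nu)$ of such a $g$ vanishes on $\C\setminus K_0$, and by Theorem A(a) we have $\mbox{abpe}(R^t(K_0,\nu))=\emptyset$. The next move is to transfer the condition $\partial K\subset\mathcal F$ $\gamma$-a.a.\ to the analogous inclusion $\partial K_0\subset\mathcal F_{K_0}$ $\gamma$-a.a., where $\mathcal F_{K_0}$ is the non-removable boundary attached to $R^t(K_0,\nu)$. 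The construction of $K_0$ in Theorem A gives $\partial K_0\subset\partial K\cup\bigcup_i\partial U_i$ up to $\gamma$-null sets, and because each $\partial U_i\cap\partial K\subset\mathcal F$ $\gamma$-a.a.\ (while portions of $\partial U_i$ lying in the interior of $K$ become part of $\mathcal F_{K_0}$ by the very act of removing $U_i$), a further semiadditivity argument yields the required inclusion. The defining property of $\mathcal F_{K_0}$ then forces the one-sided non-tangential limits of $\CT(g\nu)$ to vanish $\gamma$-a.a.\ on $\partial K_0$, and a Plemelj/removable-singularity argument combined with the absence of abpe's forces $\CT(g\nu)\equiv 0$ and hence $g=0$ by injectivity of the Cauchy transform.

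\emph{Main obstacle.} Step 1 is essentially a routine application of semiadditivity. The substantive difficulty lies in Step 2, specifically in transferring the thinness condition from $\partial K$ to the spectral set $K_0$. One must extract from the proof of Theorem A a concrete description of $K_0$ and then verify that the pieces of $\partial K_0$ which were not already present in $\partial K$ still sit inside the non-removable boundary $\mathcal F_{K_0}$ up to $\gamma$-null sets; this is delicate because $\mathcal F$ and $\mathcal F_{K_0}$ are defined relative to different rational function algebras and different ambient measures. Once this geometric transfer is secured, the combined inputs ``no analytic bounded point evaluations'' and ``fully non-removable boundary'' preclude any nontrivial annihilator of $\mbox{Rat}(K_0)$ in $L^{t'}(\nu)$, which is precisely the mechanism by which non-removable boundary theory converts a vacuous abpe-set into a trivial rational-approximation algebra.
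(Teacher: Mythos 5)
Step 1 of your proposal matches the paper: $\gamma(\partial K\setminus\partial_1 K)=0$ plus Proposition~\ref{RTFRProp2} and semiadditivity give $\partial K\subset\mathcal F$ $\gamma$-a.a., so the hypothesis of Theorem~A is met and the decomposition follows. This part is fine.

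Step 2 has a genuine gap. You propose to establish $\partial K_0\subset\mathcal F_{K_0}$ $\gamma$-a.a.\ by ``transferring'' the thinness of $\partial K$ to the spectral set $K_0$. You correctly identify this as the substantive difficulty, but you do not resolve it. The claim $\partial K_0\subset\partial K\cup\bigcup_i\partial U_i$ up to a $\gamma$-null set is not justified by anything in the construction of $K_0$ (which is simply the spectrum of $S_{\mu|_{\Delta_0}}$), and even granting it, there is no reason the portion of $\partial K_0$ inside $\mathrm{int}(K)$ should satisfy a $\partial_1$-type thinness condition relative to $K_0$. The parenthetical that these portions ``become part of $\mathcal F_{K_0}$ by the very act of removing $U_i$'' is an assertion, not an argument, and it is precisely the statement that would need a proof.

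The paper avoids the transfer altogether by never passing to $K_0$. Since $R^t(K_0,\mu|_{\Delta_0})=R^t(K,\mu|_{\Delta_0})$, it considers the non-removable boundary $\mathcal F_{\Delta_0}$ for $R^t(K,\mu|_{\Delta_0})$ with the \emph{original} compact set $K$. Because $\partial_1 K$ depends only on the geometry of $K$ and not on the measure, Proposition~\ref{RTFRProp2} applies verbatim and gives $\partial_1 K\subset\mathcal F_{\Delta_0}$ $\gamma$-a.a., so the standing hypothesis upgrades this to $\partial K\subset\mathcal F_{\Delta_0}$ $\gamma$-a.a. The interior is then handled not by a Plemelj argument but by Theorem~\ref{ABPETheoremRT}~\eqref{ABPETheoremRTEq1}: since $\mathrm{abpe}(R^t(K,\mu|_{\Delta_0}))=\emptyset$ by Theorem~A(a), that equation forces $\mathrm{int}(K)\subset\mathcal F_{\Delta_0}$ $\gamma$-a.a. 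Finally $\C\setminus K\subset\mathcal F_{\Delta_0}$ trivially (Cauchy transforms of annihilating measures vanish off $K$). Hence $\mathcal F_{\Delta_0}=\C$ $\gamma$-a.a., and Corollary~\ref{acZero} gives $\mathcal C(g\mu|_{\Delta_0})=0$ $\area$-a.a.\ for every annihilator $g$, so $g\mu|_{\Delta_0}=0$ by~\eqref{CTDistributionEq} and $R^t(K,\mu|_{\Delta_0})=L^t(\mu|_{\Delta_0})$. If you replace the attempted $K_0$-transfer with this use of $\partial_1 K$ relative to the same $K$, and cite~\eqref{ABPETheoremRTEq1} for the interior instead of a Plemelj/removable-singularity heuristic, your proof will close.
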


Main Theorem in \cite{cy22} provides an interesting example of $\rtkmu$ that does not satisfy the assumptions of Theorem A (also see \cite[Proposition 3.4]{cy22}).
In section 2, we review some results of analytic capacity and Cauchy transform that are needed in our analysis. Theorem \ref{GPTheorem1} proves a Plemelj formula for the Cauchy transform of an arbitrary finite complex-values Borel measure. In section 3, we introduce the concepts of non-removable boundary and removable set for an arbitrary compact subset $K\subset \C$ and $\mu\in M_0^+(K)$ and prove Theorem \ref{FCharacterization} to characterize them. \cite{cy22} provides a good example of the concepts for a string of beads set. Section 4 proves a key lemma (Lemma \ref{BBFRLambda}) for our approximation scheme. We discuss the relationship between the removable set and $\text{abpe}(\rtkmu)$ in section 5 (Theorem \ref{ABPETheoremRT}). In section 6, we first review the modified Vitushkin approximation scheme of Paramonov (Lemma \ref{BasicEstimate2}). Then, we prove Theorem A and Theorem B.

\section{\textbf{Preliminaries}}

For $\nu \in M_0(\C)$ and $\epsilon > 0,$ $\mathcal C_\epsilon(\nu)$ is defined by
\[
\ \mathcal C_\epsilon(\nu)(z) = \int _{|w-z| > \epsilon}\dfrac{1}{w - z} d\nu (w).
\] 
The (principal value) Cauchy transform
of $\nu$ is defined by
\ \begin{eqnarray}\label{CTDefinition}
\ \mathcal C(\nu)(z) = \lim_{\epsilon \rightarrow 0} \mathcal C_\epsilon(\nu)(z)
\ \end{eqnarray}
for all $z\in\mathbb{C}$ for which the limit exists.
From 
Corollary \ref{ZeroAC} below, we see that \eqref{CTDefinition} is defined for all $z$ except for a set of zero analytic 
capacity.
In particular, it is defined for almost all $z$ with respect to the area measure (the Lebesgue measure on $\C$) $\area .$ Throughout this paper, the Cauchy transform of a measure always means the principal value of the transform.
In the sense of distributions,
 \begin{eqnarray}\label{CTDistributionEq}
 \ \bar \partial \mathcal C(\nu) = - \pi \nu.
 \end{eqnarray}	

The maximal Cauchy transform is defined by
 \[
 \ \mathcal C_*(\nu)(z) = \sup _{\epsilon > 0}| \mathcal C_\epsilon(\nu)(z) |.
 \]

A related capacity, $\gamma _+,$ is defined for subsets $E$ of $\mathbb{C}$ by:
\[
\ \gamma_+(E) = \sup \|\mu \|,
\]
where the supremum is taken over $\mu\in M_0^+(E)$ for which $\|\mathcal{C}(\mu) \|_{L^\infty (\mathbb{C})} \le 1.$ 
Since $\mathcal C\mu$ is analytic in $\mathbb{C}_\infty \setminus \mbox{spt}(\mu)$ and $|(\mathcal{C}(\mu)'(\infty)| = \|\mu \|$, 
we have:
$\gamma _+(E) \le \gamma (E)$.  

X. Tolsa has established the following astounding results. See \cite{Tol03} (also Theorem 6.1 and Corollary 6.3 in \cite{Tol14}) for (1) and (2). See \cite[Proposition 2.1]{Tol02} (also  \cite[Proposition 4.16]{Tol14}) for (3).

\begin{theorem}\label{TolsaTheorem}
(Tolsa 2003)
(1) $\gamma_+$ and $\gamma$ are actually equivalent. 
That is, there is an absolute constant $A_T$ such that for all $E \subset \mathbb{C},$ 
\[
\ \gamma (E) \le A_ T \gamma_+(E).
\] 

(2) Semiadditivity of analytic capacity: for $E_1,E_2,...,E_m \subset \mathbb{C}$ ($m$ may be $\i$),
\[
\ \gamma \left (\bigcup_{i = 1}^m E_i \right ) \le A_T \sum_{i=1}^m \gamma(E_i).
\]

(3) There is an absolute positive constant $C_T$ such that for $a > 0$, we have:  
\[
\ \gamma(\{\mathcal{C}_*(\nu)  \geq a\}) \le \dfrac{C_T}{a} \|\nu \|.
\]

\end{theorem}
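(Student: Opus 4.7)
The plan is to prove (1) first, since (2) and (3) both follow from the equivalence $\gamma \asymp \gamma_+$ combined with more classical Calder\'on--Zygmund machinery adapted to non-doubling measures. The central tool throughout is the Melnikov--Verdera identity, which expresses $\int |\mathcal C_\epsilon \mu|^2 \, d\mu$ as $\tfrac{1}{6}$ of the truncated Menger curvature $c_\epsilon^2(\mu) = \iiint c(x,y,z)^2 \, d\mu(x)\, d\mu(y)\, d\mu(z)$ plus a linear term, where $c(x,y,z)$ is the reciprocal of the circumradius of the triangle on $\{x,y,z\}$. This positive geometric quantity converts $L^2$-boundedness of the Cauchy operator into a purely measure-theoretic statement, and it will drive every step below.

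For (1), the direction $\gamma_+ \le \gamma$ is immediate from the definitions. For the hard direction, I would fix a compact $E$ with $\gamma(E) > 0$ and take $f = \mathcal C \nu$ nearly extremal, where after a standard approximation $\nu$ can be taken to be a complex measure on $E$ with $|\nu(E)|$ comparable to $\gamma(E)$. Next I would run a Calder\'on--Zygmund stopping-time argument on the David--Mattila dyadic lattice relative to the density $\Theta_\nu(z,r) = |\nu|(\D(z,r))/r$: on the ``good'' part, where density and local curvature are controlled, construct an accretive function $b$ from $f$ and apply the non-homogeneous $T(b)$ theorem of Nazarov--Treil--Volberg to obtain $L^2(|\nu|)$-boundedness of the truncated Cauchy operators. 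A Riesz-type selection then extracts $\mu \in M_0^+(E)$ of mass comparable to $\gamma(E)$ with $\|\mathcal C \mu\|_\infty$ bounded, yielding $\gamma_+(E) \gtrsim \gamma(E)$.

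For (2), I would first establish a near-semiadditivity of $\gamma_+$ by itself: for each $E_i$ pick $\mu_i \in M_0^+(E_i)$ with $\|\mathcal C \mu_i\|_\infty \le 1$ and $\|\mu_i\| \ge \gamma_+(E_i)/2$, and use a Cotlar-type pointwise comparison to show that $\mathcal C(\sum \mu_i)$ remains essentially bounded off a set of small $\gamma_+$-capacity (absorbed using (1)); applying (1) again upgrades semiadditivity of $\gamma_+$ to that of $\gamma$. For (3), I would set $E_a = \{\mathcal C_*(\nu) \ge a\}$ and, via (1), select $\mu \in M_0^+(E_a)$ with $\|\mathcal C \mu\|_\infty \le 1$ and $\|\mu\| \ge \gamma(E_a)/A_T$. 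Linearize $\mathcal C_*$ by a measurable choice of $\epsilon(z)$ that nearly attains the supremum and then dualize:
\[
\ a \|\mu\| \le \int \mathcal C_*(\nu) \, d\mu \le C \|\mathcal C \mu\|_\infty \|\nu\| \le C \|\nu\|,
\]
where the middle step is a Cotlar-type inequality transferring the pointwise maximal truncation to $\mathcal C \mu$, itself resting on the $L^2$-theory underlying (1).

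The principal obstacle is the $T(b)$ construction inside (1): one has to adapt the dyadic lattice to the non-doubling measure $|\nu|$, partition it delicately into ``terminal'' and ``transit'' cubes that simultaneously balance density and Menger curvature, and replace the classical paraproduct bounds by curvature-based estimates in order to verify the hypotheses of the non-homogeneous $T(b)$ theorem. Once the cornerstone equivalence $\gamma \asymp \gamma_+$ is secured, statements (2) and (3) reduce to careful but parallel adaptations of classical Calder\'on--Zygmund theory to non-doubling measures.
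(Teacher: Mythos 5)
First, be aware that the paper does not prove this theorem at all: it is quoted from the literature, with (1) and (2) cited to \cite{Tol03} (see also Theorem 6.1 and Corollary 6.3 of \cite{Tol14}) and (3) to \cite[Proposition 2.1]{Tol02}. So there is no internal argument to measure yours against; the only question is whether your sketch stands on its own, and it does not. What you have written is a correct table of contents for Tolsa's proof rather than a proof: every load-bearing step is delegated to a black box. For (1), the phrases \quotes{run a Calder\'on--Zygmund stopping-time argument}, \quotes{construct an accretive function $b$}, and \quotes{apply the non-homogeneous $T(b)$ theorem} conceal the actual engine of the argument, which is an induction on scales (the \quotes{First} and \quotes{Second Main Lemmas} of the Acta paper). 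One cannot apply the Nazarov--Treil--Volberg theorem once and be done: the measure produced at a given generation of stopping cubes need not have globally controlled curvature or linear growth, and the entire difficulty is splicing the scales together so that $c^2(\mu)\lesssim\|\mu\|$ and $\Theta^*_\mu\lesssim 1$ hold simultaneously for a single measure of mass comparable to $\gamma(E)$. Until that induction is set up, the \quotes{Riesz-type selection} at the end of your paragraph has nothing to select from.

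For (2), the mechanism you propose is not the right one: summing the extremal measures $\mu_i$ and hoping that $\mathcal C\bigl(\sum_i\mu_i\bigr)$ stays \quotes{essentially bounded off a set of small capacity} fails as stated, since the sum can have an unbounded Cauchy transform on a large set and no Cotlar-type inequality repairs this. The standard route is through the characterization $\gamma_+(E)\approx\sup\{\mu(E):\mu\in M_0^+(E),\ \Theta^*_\mu\le 1,\ c^2(\mu)\le\mu(E)\}$, whose countable semiadditivity is essentially formal because Menger curvature is a positive triple integral; (1) then transfers semiadditivity to $\gamma$. For (3), your skeleton (linearize $\mathcal C_*$, dualize against an admissible $\mu$ for $E_a$) is the correct one, but the middle inequality is not a Cotlar inequality: it is the Fubini interchange $\int\mathcal C_{\epsilon(z)}(\nu)\,d\mu(z)=-\int\mathcal C_{\epsilon(\cdot)}(\mu)\,d\nu$ together with the uniform bound $\sup_{\epsilon>0}\|\mathcal C_\epsilon\mu\|_{L^\infty(\C)}\le C_1$ for measures with $\|\mathcal C\mu\|_{L^\infty(\C)}\le 1$, which is exactly Proposition \ref{GammaPlusThm} (1) of this paper and is itself a nontrivial input. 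In short, your outline faithfully locates where the proof lives in the literature, but as a self-contained argument each of its essential steps is missing.
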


For $\eta \in M_0^+(\mathbb C)$, define 
\[
\ N_2(\eta) = \sup_{\epsilon > 0}\sup_{\|f\|_{L^2(\eta)} = 1}\|\mathcal C_\epsilon (f \eta )\|_{L^2(\eta)}.
\]
$\eta$ is $c$-linear growth if
$\eta(\D(\lambda, \delta)) \le c \delta,\text{ for }\lambda\in \C \text{ and }\delta > 0.$ We use $C_1,C_2,...$ for absolute constants that may change from one step to the next.  
The following Proposition is from \cite[Theorem 4.14]{Tol14} and its proofs.

\begin{proposition} \label{GammaPlusThm}
If $F\subset \mathbb C$ is a compact subset and $\eta\in M_0(F),$ then the following properties are true.
\newline
(1) If $\|\CT \eta\|_{L^\i(\C)} \le 1,$ then $\eta$ is $1$-linear growth and $\sup_{\epsilon > 0}\|\mathcal C_\epsilon (\eta )\|_{L^\i(\C)} \le C_1.$
\newline
(2) If $\eta$ is $1$-linear growth and $\|\mathcal C_\epsilon (\eta )\|_{L^\i(\C)} \le 1$ for all $\epsilon > 0,$  then there exists a subset $A\subset F$ such that $\eta (F) \le 2 \eta (A)$ and $N_2(\eta|_A) \le C_2.$
\newline
(3) If $N_2(\eta) \le 1,$ then there exists some function $w$ supported on $F$, with $0\le w \le 1$ such that
$\eta (F) \ \le\  2 \int w d\eta$
 and 
$\sup_{\epsilon > 0}\|\mathcal C _\epsilon (w\eta)\|_{ L^\infty (\mathbb C)} \ \le\  C_3.$
\end{proposition}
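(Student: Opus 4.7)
My plan is to prove the three parts in sequence, within the framework of non-homogeneous Calderón-Zygmund theory, with the deepest input concentrated in part (2).

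For (1), the $1$-linear growth of $\eta$ can be obtained directly from the pointwise bound on $\CT\eta$: if $\eta(\D(\lambda,\delta)) = M$, an averaging argument locates a test point $z$ at distance $\sim \delta$ from $\lambda$ at which $|\CT\eta(z)| \gtrsim M/\delta$, and the hypothesis $\|\CT\eta\|_{L^\infty} \le 1$ then forces $M \lesssim \delta$. Once linear growth is in hand, the uniform estimate $\sup_{\epsilon>0}\|\CT_\epsilon\eta\|_{L^\infty} \le C_1$ follows from Cotlar's inequality, which bounds $\CT_\epsilon\eta(z)$ in terms of the principal value $\CT\eta$ at nearby points plus the centered Hardy-Littlewood maximal function of $\eta$, both controlled by the hypothesis and the linear growth respectively.

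For (2), from $1$-linear growth and the uniform $L^\infty$ bound on $\CT_\epsilon\eta$, I would invoke the non-homogeneous $T(1)$-theorem of Nazarov-Treil-Volberg. The assumption $\|\CT_\epsilon\eta\|_{L^\infty} \le 1$ says that $\CT_\epsilon(1)$ is uniformly bounded; after a stopping-time construction on random dyadic cubes that discards cubes where a weak boundedness property or BMO-type hypothesis fails, one obtains a subset $A\subset F$ with $\eta(A) \ge \eta(F)/2$ on which the truncated Cauchy operator $f\mapsto \CT_\epsilon(f\eta|_A)$ is $L^2(\eta|_A)$-bounded uniformly in $\epsilon$, which is precisely the statement $N_2(\eta|_A) \le C_2$.

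For (3), under $N_2(\eta)\le 1$, Cotlar's inequality promotes the uniform $L^2(\eta)$-boundedness of the truncations to an $L^2(\eta)$-bound on the maximal transform $\CT_*$, and Chebyshev's inequality then supplies a set $G\subset F$ with $\eta(G) \ge \eta(F)/2$ on which $\CT_*(\eta)(z) \le C$. Taking $w = \chi_G$ makes $\eta(F)\le 2\int w\, d\eta$ automatic. For the uniform pointwise bound on $\CT_\epsilon(w\eta)$ at an arbitrary $z\in\C$, one decomposes the kernel into a near part, supported in $\D(z,r)$ and controlled by the linear growth of $\eta$ together with $0\le w\le 1$, and a far part, related via another application of Cotlar to $\CT_*(w\eta)(z')$ at a point $z'\in G$ near $z$; both pieces are bounded by construction. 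The main obstacle is the non-homogeneous $T(1)$ theorem invoked in (2): bootstrapping from an $L^\infty$ bound on $\CT_\epsilon(1)$ to full $L^2(\eta)$-boundedness on a large subset, without any doubling condition on $\eta$, is the substantive work in the proposition and requires the full machinery of random dyadic lattices together with good/bad cube decompositions; parts (1) and (3) become comparatively short corollaries once this $L^2$ boundedness is available.
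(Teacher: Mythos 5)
The paper offers no proof of this proposition; it is imported directly from Theorem 4.14 of \cite{Tol14} and its proof, so there is no in-paper argument to compare against. Judged on its own, your outline of part (1) is essentially correct (a contour/averaging argument for linear growth, then a Cotlar-type comparison between $\mathcal C_\epsilon\eta(z)$ and the area average of $\mathcal C\eta$ over $\D(z,\epsilon/2)$ after splitting $\eta$ at the circle $|w-z|=\epsilon$), but parts (2) and (3) have real problems, the most serious being (3).

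In (3), the step from the big set $G=\{\mathcal C_*\eta\le C\}$ to the global bound on $\mathcal C_\epsilon(w\eta)$ does not work with $w=\chi_G$, and the near/far split you describe does not close the gap. For $z\in G$ one has $\mathcal C_\epsilon(\chi_G\eta)(z)=\mathcal C_\epsilon\eta(z)-\mathcal C_\epsilon(\chi_{G^c}\eta)(z)$; the first term is controlled by $\mathcal C_*\eta(z)\le C$, but the second is not---deleting the mass on $G^c$ destroys the cancellations that made $\mathcal C\eta$ small (two nearly opposite masses with one removed is the basic obstruction), and there is no a priori bound on $\mathcal C(\chi_{G^c}\eta)$ restricted to $G$. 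Your ``far part'' appeals to a bound on $\mathcal C_*(w\eta)(z')$ for $z'\in G$, but $G$ was built from a bound on $\mathcal C_*\eta$, not on $\mathcal C_*(w\eta)$; that quantity is exactly what is being claimed, so the argument is circular. The actual mechanism in Tolsa's proof is a dualization (Davie--Oksendal/Uy) or, equivalently, the suppressed Cauchy kernel $k_\Phi(z,w)=(\bar z-\bar w)/(|z-w|^2+\Phi(z)\Phi(w))$: one shows the suppressed transform of $\eta$ is pointwise bounded with constant independent of the suppression $\Phi$, chooses $\Phi$ in terms of $\mathcal C_*\eta$, and thereby produces a genuine weight $w$ (not a characteristic function) for which $k_\Phi$ agrees with the Cauchy kernel on the relevant set. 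In (2), the hypothesis $\sup_\epsilon\|\mathcal C_\epsilon\eta\|_{L^\infty(\C)}\le 1$ is a statement about $\mathcal C$ of the full measure tested at every point of $\C$; it does not deliver the local testing condition $\|\mathcal C_\epsilon(\chi_Q\eta)\|_{L^2(\eta|_Q)}^2\lesssim\eta(Q)$ required by the non-homogeneous $T(1)$ theorem (the contribution of $\chi_{Q^c}\eta$ on $Q$ is only controlled up to a logarithm of the ratio of scales). Tolsa's route is instead through curvature: the uniform $L^\infty$ bound gives $\|\mathcal C_\epsilon\eta\|_{L^2(\eta)}^2\le\|\eta\|$, hence $c^2(\eta)\lesssim\|\eta\|$ by the Melnikov--Verdera identity, and the Nazarov--Treil--Volberg $T(b)$ theorem for the Cauchy transform then produces the big piece $A$ with $N_2(\eta|_A)\lesssim 1$.
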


Combining Theorem \ref{TolsaTheorem} (1), Proposition \ref{GammaPlusThm}, and \cite{Tol03} (or \cite[Theorem 8.1]{Tol14}), we get the following corollary. The reader may also see \cite[Corollary 3.1]{acy19}.

\begin{corollary}\label{ZeroAC}
If $\nu\in M_0(\C),$ then there exists 
$\mathcal Q \subset \mathbb{C}$ with $\gamma(\mathcal Q) = 0$ such that $\lim_{\epsilon \rightarrow 0}\mathcal{C} _{\epsilon}(\nu)(z)$ 
exists for $z\in\mathbb{C}\setminus \mathcal Q$.
\end{corollary}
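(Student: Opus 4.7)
The corollary combines a theorem of Tolsa giving existence of principal values $\gamma$-a.e.\ for positive measures with uniformly $L^\infty$-bounded truncated Cauchy transforms (\cite{Tol03}, \cite[Theorem 8.1]{Tol14}) with a decomposition that reduces the general case to that one.

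First I would reduce to $\nu \ge 0$ via Jordan decomposition of the real and imaginary parts. The main step is to construct $\nu = \sum_{k=1}^\infty \nu_k$ with $0 \le \nu_k \le \nu$, $\|\tau_K\| \to 0$ (where $\tau_K := \nu - \sum_{k \le K} \nu_k$), and $\sup_{\epsilon>0}\|\mathcal{C}_\epsilon(\nu_k)\|_{L^\infty(\C)} \le C$ for a uniform constant $C$. The construction iterates Proposition \ref{GammaPlusThm}: starting from $\gamma \le A_T \gamma_+$ (Theorem \ref{TolsaTheorem}(1)) to obtain a positive submeasure of $\tau_{K-1}$ with bounded $L^\infty$-Cauchy transform, and then applying parts (1)--(3) of Proposition \ref{GammaPlusThm} in sequence to extract a piece $\nu_K$ with uniformly bounded $L^\infty$ truncated Cauchy transforms capturing a definite fraction of the remaining mass.

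For each piece $\nu_k$, the cited theorem of Tolsa supplies a set $Q_k$ with $\gamma(Q_k) = 0$ off of which $\mathcal{C}_\epsilon(\nu_k)(z)$ converges. To glue, for each integer $n \ge 1$ define
\[
\ F_n := \bigcap_{K \ge 1} \{z : \mathcal{C}_*(\tau_K)(z) > 1/n\}.
\]
By Theorem \ref{TolsaTheorem}(3), $\gamma(F_n) \le C_T n \|\tau_K\|$ for every $K$, so $\gamma(F_n) = 0$. Setting $\mathcal{Q} := \bigcup_k Q_k \cup \bigcup_n F_n$, countable semiadditivity (Theorem \ref{TolsaTheorem}(2)) gives $\gamma(\mathcal{Q}) = 0$. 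For $z \notin \mathcal{Q}$ and each $n$, one can choose $K$ with $\mathcal{C}_*(\tau_K)(z) \le 1/n$; the oscillation of $\epsilon \mapsto \mathcal{C}_\epsilon(\nu)(z) = \sum_{k \le K}\mathcal{C}_\epsilon(\nu_k)(z) + \mathcal{C}_\epsilon(\tau_K)(z)$ as $\epsilon \to 0$ is then at most $2/n$, since the finite sum is Cauchy and the tail is uniformly bounded by $1/n$. Letting $n \to \infty$ shows $\mathcal{C}_\epsilon(\nu)(z)$ converges.

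The main obstacle is executing the decomposition, since Proposition \ref{GammaPlusThm}(3) requires the strong hypothesis $N_2(\eta) \le 1$, which is not automatic for an arbitrary positive measure. The route is to begin with a $\gamma_+$-extremal (yielding bounded principal-value Cauchy transform), invoke Proposition \ref{GammaPlusThm}(1) to get linear growth and bounded truncations, (2) to pass to a subset with bounded $N_2$, and finally (3) to obtain uniformly bounded truncated Cauchy transforms for a weighted submeasure; additional care is required to realize the resulting submeasure as a genuine piece of $\nu$ itself, which is the technical core handled by the cited works of Tolsa.
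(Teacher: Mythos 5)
Your gluing step is sound: the sets $F_n:=\bigcap_{K}\{\mathcal C_*(\tau_K)>1/n\}$ have $\gamma(F_n)\le C_T n\|\tau_K\|\to 0$ by Theorem \ref{TolsaTheorem}(3), countable semiadditivity kills $\mathcal Q$, and the oscillation estimate off $\mathcal Q$ is correct. The gap is the decomposition $\nu=\sum_k\nu_k$ with $0\le\nu_k\le\nu$ and $\sup_\epsilon\|\mathcal C_\epsilon\nu_k\|_\infty\le C$. None of the cited tools produce a \emph{submeasure of $\nu$} with this property. The chain ``$\gamma\le A_T\gamma_+$, then Proposition \ref{GammaPlusThm}(1)--(3)'' starts from a $\gamma_+$-extremal $\sigma\in M_0^+(\mathrm{spt}\,\tau_{K-1})$, and every object it produces ($\sigma$, then $\sigma|_A$, then $w\sigma|_A$) is a piece of that \emph{auxiliary} measure $\sigma$, not of $\tau_{K-1}$; there is no step that majorizes the result by $\tau_{K-1}$. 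You flag this yourself (``additional care is required to realize the resulting submeasure as a genuine piece of $\nu$ itself''), but that is exactly the load-bearing step, and it is not supplied — nor, to my knowledge, is it what the cited references do. Iterating Proposition \ref{GammaPlusThm} on $\tau_{K-1}$ directly is also blocked: part (1) needs $\|\mathcal C\tau_{K-1}\|_\infty\le 1$ and parts (2)--(3) need linear growth, neither of which an arbitrary positive tail satisfies.

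The paper's route is different and avoids the decomposition entirely. The relevant form of Tolsa's theorem (\cite{Tol03}, \cite[Theorem 8.1]{Tol14}) is not ``principal values exist $\gamma$-a.e.\ for measures with bounded truncated transforms,'' but rather: if $\eta\in M_0^+$ has linear growth and $N_2(\eta)<\infty$, then for \emph{every} $\nu\in M_0(\C)$ the principal value $\mathcal C\nu(z)$ exists $\eta$-a.e. Given that, one argues by contradiction: if the non-convergence set $D$ had $\gamma(D)>0$, pick a compact $E\subset D$ with $\gamma(E)>0$, use Theorem \ref{TolsaTheorem}(1) to get $\sigma\in M_0^+(E)$ with $\|\mathcal C\sigma\|_\infty\le 1$ and $\|\sigma\|\gtrsim\gamma(E)$, run Proposition \ref{GammaPlusThm}(1)--(2) to get $\eta=\sigma|_A\in M_0^+(E)$ with $\eta(E)>0$, linear growth, and $N_2(\eta)\le C_2$, and conclude from Tolsa's theorem that $\mathcal C\nu$ exists at $\eta$-a.e.\ point of $E\subset D$ — a contradiction. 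That argument needs no exhaustion of $\nu$ by bounded pieces and no gluing. I'd recommend replacing your construction with this contradiction argument, or else supplying a genuine proof that the tails $\tau_K$ can be peeled into submeasures of $\nu$ with uniformly bounded truncated Cauchy transforms (which, because of possible atoms and high-density parts of $\nu$, will at minimum require a preliminary splitting of $\nu$ that your sketch does not include).
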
 

\begin{corollary}\label{ACIncreasing}
Let $E_n\subset E_{n+1}\subset \D(0, R)$ be a sequence of subsets. Then
\[
\ \gamma \left (\cup_{n = 1}^\i E_n \right ) \le C_1 \lim_{n\rightarrow \i} \gamma(E_n).
\]
\end{corollary}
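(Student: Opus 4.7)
The plan is to reduce to Tolsa's capacity $\gamma_+$ by Theorem \ref{TolsaTheorem}(1) and then use Proposition \ref{GammaPlusThm} to transport a near-extremal measure for $\gamma_+(\bigcup_n E_n)$ down to measures supported in compact subsets of the individual $E_n$'s. Since $\gamma \le A_T \gamma_+$, it suffices to prove $\gamma_+(\bigcup_n E_n) \le C \lim_n \gamma(E_n)$ for an absolute constant $C$. I will replace each $E_n$ by a Borel set of the same analytic capacity containing it so that subsequent measure-theoretic arguments apply; the $E_n$'s are thus taken Borel and increasing in $\D(0, R)$.

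Fix $\epsilon > 0$ and choose $\mu \in M_0^+(\bigcup_n E_n)$ with $\|\CT(\mu)\|_{L^\infty(\C)} \le 1$ and $\|\mu\| \ge \gamma_+(\bigcup_n E_n) - \epsilon$. By Proposition \ref{GammaPlusThm}(1), $\mu$ is $1$-linear growth and $\sup_{\delta>0}\|\CT_\delta(\mu)\|_{L^\infty(\C)} \le C_1$. Applying Proposition \ref{GammaPlusThm}(2) to $\mu/C_1$ produces a Borel subset $A \subset \text{spt}(\mu)$ with $\mu(A) \ge \|\mu\|/2$ and $N_2(\mu|_A) \le C_1 C_2$. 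Since $\text{spt}(\mu) \subset \bigcup_n E_n$ with the $E_n$'s Borel and increasing, $\mu(A \cap E_n) \nearrow \mu(A)$; by inner regularity of $\mu$, choose compact $K_n \subset A \cap E_n$ with $\mu(K_n) \to \mu(A) \ge \|\mu\|/2$. Restriction does not increase $N_2$, so $N_2(\mu|_{K_n}) \le C_1 C_2$. Applying Proposition \ref{GammaPlusThm}(3) to $\mu|_{K_n}/(C_1 C_2)$ furnishes a Borel weight $0 \le w_n \le 1$ supported on $K_n$ with $\int w_n \, d\mu \ge \mu(K_n)/2$ and $\|\CT(w_n \mu|_{K_n})\|_{L^\infty(\C)} \le C_1 C_2 C_3$. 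Thus $w_n \mu|_{K_n} \in M_0^+(K_n)$ witnesses $\gamma_+(K_n) \ge \mu(K_n)/(2 C_1 C_2 C_3)$, giving $\gamma(E_n) \ge \gamma(K_n) \ge \gamma_+(K_n) \ge \mu(K_n)/(2 C_1 C_2 C_3)$. Letting $n \to \infty$ and then $\epsilon \to 0$ yields $\gamma_+(\bigcup_n E_n) \le 4 C_1 C_2 C_3 \lim_n \gamma(E_n)$, and combining with Theorem \ref{TolsaTheorem}(1) gives the corollary.

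The main obstacle is the nonlocality of the Cauchy transform: a uniform bound on $\|\CT(\mu)\|_{L^\infty}$ does \emph{not} pass to restrictions $\mu|_F$ for general subsets $F$ of $\text{spt}(\mu)$, so a naive truncation of $\mu$ cannot directly witness $\gamma_+(K_n)$. Proposition \ref{GammaPlusThm} is the precise device that circumvents this: the $L^2$ testing condition $N_2(\cdot) \le C$ \emph{is} hereditary under restriction to Borel subsets, and Tolsa's theory then permits passing back from this $L^2$ bound to an $L^\infty$ bound on the Cauchy transform of a sub-measure of comparable total mass, at the cost of an absolute constant. A secondary technical point, the measurability of the $E_n$'s, is handled by replacing each by a Borel envelope of equal capacity at the outset.
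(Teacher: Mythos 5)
Your proof is correct and follows essentially the same route as the paper's: both arguments reduce to $\gamma_+$ via Theorem \ref{TolsaTheorem}(1), extract a measure with bounded $N_2$ from a near-extremal measure for $\gamma_+(\cup_n E_n)$ using Proposition \ref{GammaPlusThm}(2), exploit that $N_2$ is hereditary under restriction to pass to (a compact subset of) each $E_n$, and then run Proposition \ref{GammaPlusThm}(3) together with Theorem \ref{TolsaTheorem}(1) in the reverse direction to get $\eta(E_n)\lesssim\gamma(E_n)$. You spell out two points the paper leaves implicit (the Borel-envelope replacement to make the $E_n$ measurable and the inner-regularity step producing compact $K_n\subset E_n$), which tightens the same argument rather than changing it.
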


\begin{proof}
By Theorem \ref{TolsaTheorem} (1) and Proposition \ref{GammaPlusThm} (2), there exists a compact subset $F\subset \cup_{n = 1}^\i E_n$ and $\eta \in M_0^+(F)$ with $1$-linear growth such that $N_2(\eta) = 1$ and 
 \[ 
 \ \gamma \left (\cup_{n = 1}^\i E_n \right ) \le C_4 \|\eta\| \le C_4 \lim_{n\rightarrow \i} \eta(E_n). 
 \]
 Since $N_2(\eta|_{E_n}) \le 1,$
 by Proposition \ref{GammaPlusThm} (3) and Theorem \ref{TolsaTheorem} (1), we have $\eta(E_n) \le C_5\gamma(E_n).$
\end{proof}

\begin{lemma} \label{CTMaxFunctFinite}
Let $\{\nu_j\} \subset M_0(\C).$ Then for $\epsilon > 0$, there exists a Borel subset $F$ such that $\gamma (F^c) < \epsilon$ and $\mathcal C_*(\nu_j)(z) \le M_j < \infty$ for $z \in F$.
\end{lemma}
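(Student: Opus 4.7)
The plan is to combine Tolsa's weak-type bound for the maximal Cauchy transform (Theorem \ref{TolsaTheorem}(3)) with countable semiadditivity of analytic capacity (Theorem \ref{TolsaTheorem}(2)). For each index $j$, the level set $\{z:\mathcal C_*(\nu_j)(z)\ge a\}$ has analytic capacity at most $C_T \|\nu_j\|/a$, so by choosing a single threshold $M_j$ large in $j$ (geometrically decreasing the allowable capacity), the union over $j$ of the exceptional sets will still have small total capacity.

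More precisely, given $\varepsilon>0$, I would set
\[
\ M_j \;=\; \frac{2^{j}\,A_T\,C_T\,\|\nu_j\|}{\varepsilon}
\]
for each $j\ge 1$ (or $M_j=1$ if $\nu_j=0$), and define
\[
\ E_j \;=\; \{z\in\C:\mathcal C_*(\nu_j)(z)\ge M_j\}, \qquad F \;=\; \C\setminus \bigcup_{j=1}^{\infty} E_j.
\]
By construction, $\mathcal C_*(\nu_j)(z)<M_j<\infty$ for every $z\in F$ and every $j$. Each $E_j$ is Borel because $\mathcal C_*(\nu_j)$ is a supremum of the Borel functions $z\mapsto |\mathcal C_\epsilon(\nu_j)(z)|$, and by right-continuity in $\epsilon$ the supremum can be taken over rational $\epsilon>0$; hence $F$ is Borel.

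To estimate $\gamma(F^c)$, I apply Theorem \ref{TolsaTheorem}(3) to each $\nu_j$, which gives
\[
\ \gamma(E_j) \;\le\; \frac{C_T\,\|\nu_j\|}{M_j} \;=\; \frac{\varepsilon}{2^{j}A_T}.
\]
Then countable semiadditivity (Theorem \ref{TolsaTheorem}(2)) yields
\[
\ \gamma(F^c) \;=\; \gamma\Bigl(\bigcup_{j=1}^\infty E_j\Bigr) \;\le\; A_T\sum_{j=1}^\infty \gamma(E_j) \;\le\; A_T\sum_{j=1}^\infty \frac{\varepsilon}{2^{j}A_T} \;=\; \varepsilon,
\]
giving the lemma. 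There is no real obstacle here: the only mild subtlety is confirming Borel measurability of the sets $E_j$ (handled by reducing the supremum to a countable one), after which the argument is a direct application of the two Tolsa results recorded in the preliminaries.
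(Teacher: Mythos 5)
Your proof is correct and follows essentially the same route as the paper: choose thresholds $M_j$ via Tolsa's weak-type estimate (Theorem \ref{TolsaTheorem}(3)) so that the exceptional sets have capacity $\lesssim \varepsilon/2^j$, then sum with semiadditivity (Theorem \ref{TolsaTheorem}(2)). Your added remark on Borel measurability of the level sets is a harmless extra detail not spelled out in the paper.
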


\begin{proof}
Let $A_j = \{\mathcal C_*(\nu_j)(z) \le M_j\}.$ By Theorem \ref{TolsaTheorem} (3), we can select $M_j>0$ so that $\gamma(A_j^c) < \frac{\epsilon}{2^{j+1}A_T}.$ Set $F = \cap_{j=1}^\infty A_j$. Then applying Theorem \ref{TolsaTheorem} (2), we get
\[
 \ \gamma (F^c) \le A_T \sum_{j=1}^\infty \gamma(A_j^c) < \epsilon.
 \]
\end{proof}

Given $0 < \epsilon < \infty$, for $A\subset \mathbb C$, 
we define 
 \[
 \ \mathcal H^1_\epsilon (A) = \inf \left \{\sum_i \text{diam}(A_i):~ A\subset \cup_i A_i,~ \text{diam}(A_i)\le \epsilon \right \}.
 \] 
The $1$-dimensional Hausdorff measure of $A$ is: 
 \[
 \ \mathcal H^1 (A) = \sup_{\epsilon >0} \mathcal H^1_\epsilon (A) = \lim _{\epsilon \rightarrow 0} \mathcal H^1_\epsilon (A). 
 \]
For detailed information about Hausdorff measures, the reader may take a look at the following books: Carleson \cite{car67} and Mattila \cite{mat95}.

For $\nu\in M_0(\C),$ define the zero and non zero linear density sets:
\[
 \ \mathcal{ZD}(\nu) = \{ \lambda:~ \Theta_\nu (\lambda ) = 0 \},~
  \mathcal{ND}(\nu) = \{ \lambda:~ \Theta^*_\nu (\lambda ) > 0 \},
 \]
where $\Theta_\nu (\lambda ) := \lim_{\delta\rightarrow 0} \frac{|\nu |(\D(\lambda , \delta ))}{\delta}$
 if the limit exists and $\Theta_\nu^* (\lambda ) := \underset{\delta\rightarrow 0}{\overline\lim} \frac{|\nu |(\D(\lambda , \delta ))}{\delta}.$
Set
 \[
 \mathcal{ND}(\nu, n) =  \{ \lambda:~ n^{-1} \le \Theta^*_\nu (\lambda ) \le n \} .
 \]
 
 \begin{lemma}\label{RNDecom2} Let $\mu\in M_0^+(\C)$ and $\nu\in M_0(\C).$ The following properties hold.

(a) For $\lambda_2 \ge \lambda_1 > 0$, let  
 $E = \{z:~ \Theta^*_\mu (z) \ge \lambda_1 \}$ and $F = \{z:~ \lambda _1 \le \Theta^*_\mu (z) \le \lambda_2 \}.$
Then $\mathcal H^1(E) \le C_6 \frac{\mu(E)}{\lambda_1}$ and $\mu |_F = g \mathcal H^1|_F$, where $g$ is some Borel function such that $C_7\lambda_1 \le g(z) \le C_8 \lambda_2,~ \mathcal H^1|_F-a.a.$.

(b) If $\mathcal Q_\mu = \{z:~ \Theta_\mu^*(z) = \infty\}$, then $\gamma(\mathcal Q_\mu) = \mathcal H^1(\mathcal Q_\mu) = 0$, 
 \[
 \ \mathcal{ND}(\mu) = \bigcup_n \mathcal{ND}(\mu, n) \cup \mathcal Q_\mu,
 \]
 $\mu |_{\mathcal{ND}(\mu)} = g \mathcal H^1|_{\mathcal{ND}(\mu)}$, and $C_7\frac{1}{n} \le g(z) \le C_8 n,~ \mathcal H^1|_{\mathcal{ND}(\mu, n)}-a.a.$.

(c) If $\eta\in M_0^+(\C)$ is $c$-linear growth and $\eta\perp|\nu|$, then $\eta(\mathcal{ND}(\nu)) = 0$. 
\end{lemma}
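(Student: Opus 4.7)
The plan is to reduce each part to classical covering/density theorems for Hausdorff measure combined with the Painlev\'e bound $\gamma(E) \le \mathcal H^1(E)$. For (a), the first estimate is the standard growth-to-Hausdorff comparison: for each $z \in E$ and arbitrary $\delta > 0$, pick $r_z < \delta$ with $\mu(\D(z, r_z)) \ge (\lambda_1/2) r_z$, then apply a Besicovitch covering argument (see Mattila \cite{mat95}) to extract a bounded-overlap subcover of $E$, so that outer regularity of $\mu$ yields $\sum_i 2 r_i \le C \mu(E)/\lambda_1$. Sending $\delta \to 0$ gives $\mathcal H^1(E) \le C_6 \mu(E)/\lambda_1$. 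The representation $\mu|_F = g \mathcal H^1|_F$ with $C_7 \lambda_1 \le g \le C_8 \lambda_2$ then follows from the Lebesgue-Besicovitch differentiation theorem for Radon measures, once one observes that on $F$ the upper density of $\mu$ relative to $\mathcal H^1|_F$ is squeezed into $[c\lambda_1, C\lambda_2]$ $\mathcal H^1$-a.e.

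For (b), the containment $\mathcal Q_\mu \subset \{\Theta^*_\mu \ge \lambda\}$ for every $\lambda > 0$ together with (a) gives $\mathcal H^1(\mathcal Q_\mu) \le C_6 \|\mu\|/\lambda$; letting $\lambda \to \infty$ produces $\mathcal H^1(\mathcal Q_\mu) = 0$, and the Painlev\'e inequality then yields $\gamma(\mathcal Q_\mu) = 0$. The decomposition $\mathcal{ND}(\mu) = \bigcup_n \mathcal{ND}(\mu, n) \cup \mathcal Q_\mu$ is immediate from the definitions, and applying (a) to each $F = \mathcal{ND}(\mu, n)$ with $\lambda_1 = 1/n$, $\lambda_2 = n$ supplies the density representation that pieces together to $\mu|_{\mathcal{ND}(\mu)} = g \mathcal H^1|_{\mathcal{ND}(\mu)}$ with the claimed pointwise bounds on each $\mathcal{ND}(\mu, n)$.

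The proof of (c) rests on the observation that any $c$-linear growth measure $\eta$ vanishes on $\mathcal H^1$-null sets: covering $F$ by discs $\D(z_i, r_i)$ with $\sum 2 r_i < \epsilon$ yields $\eta(F) \le c \sum r_i < c\epsilon/2$. Now pick a Borel set $B$ with $|\nu|(B) = 0$ and $\eta(B^c) = 0$. By (b) applied to $|\nu|$, on each $\mathcal{ND}(\nu, n)$ the measure $|\nu|$ is equivalent to $\mathcal H^1|_{\mathcal{ND}(\nu, n)}$, so $|\nu|(B \cap \mathcal{ND}(\nu, n)) = 0$ forces $\mathcal H^1(B \cap \mathcal{ND}(\nu, n)) = 0$, and therefore $\eta(B \cap \mathcal{ND}(\nu, n)) = 0$. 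Combining with $\eta(B^c) = 0$ gives $\eta(\mathcal{ND}(\nu, n)) = 0$; summing in $n$ and adding $\eta(\mathcal Q_{|\nu|}) = 0$ (from $\mathcal H^1(\mathcal Q_{|\nu|}) = 0$) closes part (c). The main technical obstacle is really the Besicovitch-type covering estimate at the start of (a); once that is in hand, parts (b) and (c) are straightforward bookkeeping using the Painlev\'e inequality and the $\mathcal H^1$-annihilation property of linear-growth measures.
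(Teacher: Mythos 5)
Your proof is correct. The one genuine divergence from the paper is in part (c): you fix a Borel witness $B$ of the singularity $\eta\perp|\nu|$ (so $|\nu|(B)=0$ and $\eta(B^c)=0$), transfer $|\nu|(B\cap\mathcal{ND}(\nu,n))=0$ into $\mathcal H^1(B\cap\mathcal{ND}(\nu,n))=0$ via the equivalence $|\nu|\sim\mathcal H^1$ on $\mathcal{ND}(\nu,n)$ supplied by (b), and then use that a linear-growth measure annihilates $\mathcal H^1$-null sets. The paper instead splits $\mathcal{ND}(\nu)$ against $\mathcal{ND}(\eta)\cup\mathcal{ZD}(\eta)$: it shows $\mathcal H^1(\mathcal{ND}(\eta)\cap\mathcal{ND}(\nu))=0$ directly from (a) together with the mutual singularity, and then disposes of the $\mathcal{ZD}(\eta)$-piece by noting $\eta|_{\mathcal{ND}(\nu)}\ll\mathcal H^1|_{\mathcal{ND}(\nu)}$ (linear growth plus $\sigma$-finiteness) and differentiating, so that the vanishing linear density of $\eta$ on $\mathcal{ZD}(\eta)$ forces the Radon--Nikodym derivative to vanish there. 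Your route is slightly leaner in that it avoids the final differentiation step for $\eta$; the paper's route avoids naming the singularity witness explicitly. The two are interchangeable and draw on the same inputs from (a)/(b). For (a) you reconstruct the content of \cite[Lemma 8.12]{Tol14} (which the paper simply cites) via a Besicovitch covering and the Lebesgue--Besicovitch differentiation theorem, which is exactly the standard proof of that reference; for (b) your argument coincides with the paper's, letting the threshold in (a) tend to infinity and then invoking the Painlev\'e inequality $\gamma\le\mathcal H^1$.
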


\begin{proof}
See \cite[Lemma 8.12]{Tol14} for (a).

(b): From (a),
$\mathcal H^1(\mathcal Q_\mu) \le \mathcal H^1\{z:~ \Theta^*_\mu (z) \ge n\} \le \frac{C_4}{n}\|\mu\|,$ which implies (b). 

(c): By (a), $\mathcal H^1(\mathcal{ND}(\eta,m)\cap \mathcal{ND}(\nu, n)) = 0$ as $\eta\perp|\nu|$. Therefore, $\mathcal H^1(\mathcal{ND}(\eta)\cap \mathcal{ND}(\nu)) = 0$. On the other hand, $\mathcal H^1(\mathcal{ND}(\nu, n)) < \infty$, which implies $\eta |_{\mathcal{ND}(\nu)}$ is absolutely continuous with respect to $\mathcal H^1  |_{\mathcal{ND}(\nu)}$ since $\eta$ is linear growth. Hence, $\eta(\mathcal{ZD}(\eta) \cap \mathcal{ND}(\nu)) = 0$, which implies $\eta(\mathcal{ND}(\nu)) = 0$.
\end{proof}

\begin{definition}\label{GLDef}
Let  $\mathcal Q$ be a set with $\gamma(\mathcal Q) = 0$.
Let $f(z)$ be a function defined 
on $\D(\lambda, \delta_0)\setminus \mathcal Q$ for some $\delta_0 > 0.$ The function $f$ has a $\gamma$-limit $a$ at $\lambda$ if
\[  
 \  \lim_{\delta \rightarrow 0} \dfrac{\gamma(\D(\lambda, \delta) \cap \{|f(z) - a| > \epsilon\})} {\delta}= 0
\]
for all $\epsilon > 0$. If in addition, $f(\lambda)$ is well defined and $a = f(\lambda)$, then $f$ is $\gamma$-continuous at $\lambda$. 
\end{definition}
 
 The following lemma is from \cite[Lemma 3.2]{acy19}. 

\begin{lemma}\label{CauchyTLemma} 
Let $\nu\in M_0(\mathbb{C})$ and assume that for some $\lambda _0\in\mathbb C,$ 
$\Theta_\nu (\lambda _0) = 0$ and $\mathcal{C} (\nu)(\lambda _0) = \lim_{\epsilon \rightarrow 0}\mathcal{C} _{\epsilon}(\nu)(\lambda _0)$ exists.
Then the  Cauchy transform $\mathcal{C}(\nu)(\lambda)$ is $\gamma$-continuous at $\lambda_0$.
\end{lemma}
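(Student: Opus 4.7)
The plan is to prove $\gamma$-continuity by splitting $\nu$ at the scale $\delta$, estimating the far part uniformly and the near part via the weak-type bound of Theorem \ref{TolsaTheorem}(3), then combining with semiadditivity. Write $a = \mathcal{C}(\nu)(\lambda_0)$, and for $\delta > 0$ set $\nu_1 = \nu|_{\D(\lambda_0, 2\delta)}$ and $\nu_2 = \nu - \nu_1$. At every $z \in \D(\lambda_0, \delta)$ where $\mathcal{C}(\nu)(z)$ exists (the complement having zero analytic capacity by Corollary \ref{ZeroAC}), I would decompose
\[
\mathcal{C}(\nu)(z) - a = \mathcal{C}(\nu_1)(z) + \bigl(\mathcal{C}(\nu_2)(z) - \mathcal{C}(\nu_2)(\lambda_0)\bigr) - \mathcal{C}(\nu_1)(\lambda_0),
\]
noting that $\mathcal{C}(\nu_1)(\lambda_0)$ exists in principal value since for $0 < \varepsilon < 2\delta$, $\mathcal{C}_\varepsilon(\nu)(\lambda_0) - \mathcal{C}_\varepsilon(\nu_1)(\lambda_0) = \mathcal{C}(\nu_2)(\lambda_0)$ is independent of $\varepsilon$.

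For the far part I would use the elementary bound $|(w-z)^{-1} - (w-\lambda_0)^{-1}| \le 2\delta|w-\lambda_0|^{-2}$ valid for $|w-\lambda_0| \ge 2\delta$ and $z \in \D(\lambda_0, \delta)$, and then integration by parts with $N(r) := |\nu|(\D(\lambda_0,r))$:
\[
2\delta\!\int_{|w-\lambda_0| > 2\delta}\!\!\frac{d|\nu|(w)}{|w-\lambda_0|^2} \le 4\delta \!\int_{2\delta}^{\infty}\!\! \frac{N(r)}{r^3}\,dr.
\]
Given $\eta > 0$, choose $r_1$ so that $N(r) < \eta r$ for $r \le r_1$ (using $\Theta_\nu(\lambda_0)=0$); split the integral at $r_1$ to obtain a bound $\le 2\eta + 2\delta\|\nu\|/r_1^2$, which is $< 3\eta$ for $\delta$ small enough. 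This gives $|\mathcal{C}(\nu_2)(z) - \mathcal{C}(\nu_2)(\lambda_0)| < \epsilon/3$ uniformly on $\D(\lambda_0,\delta)$. A similar observation, combined with $\mathcal{C}(\nu_2)(\lambda_0) = \mathcal{C}_{2\delta}(\nu)(\lambda_0) \to a$, forces $|\mathcal{C}(\nu_1)(\lambda_0)| \to 0$, hence $< \epsilon/3$ for small $\delta$.

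For the near part, apply Theorem \ref{TolsaTheorem}(3):
\[
\gamma\bigl(\{z: \mathcal{C}_*(\nu_1)(z) > \epsilon/3\}\bigr) \le \tfrac{3 C_T}{\epsilon}\,|\nu|(\D(\lambda_0, 2\delta)),
\]
and divide by $\delta$; the hypothesis $\Theta_\nu(\lambda_0)=0$ gives $|\nu|(\D(\lambda_0,2\delta))/\delta \to 0$. Since $|\mathcal{C}(\nu_1)(z)| \le \mathcal{C}_*(\nu_1)(z)$ wherever the principal value exists, this set contains $\{z \in \D(\lambda_0,\delta): |\mathcal{C}(\nu_1)(z)| > \epsilon/3\}$ up to a $\gamma$-null set. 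Finally, for $\delta$ sufficiently small the three error terms together yield
\[
\D(\lambda_0,\delta) \cap \{|\mathcal{C}(\nu) - a| > \epsilon\} \subset \{\mathcal{C}_*(\nu_1) > \epsilon/3\} \cup \mathcal{Q},\qquad \gamma(\mathcal{Q})=0,
\]
and semiadditivity (Theorem \ref{TolsaTheorem}(2)) gives the desired $o(\delta)$ bound on analytic capacity.

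The main obstacle I expect is the far-part smoothness estimate: the naive bound $\int |w-\lambda_0|^{-2}\,d|\nu|$ need not be finite, so one must carefully use the vanishing linear density at $\lambda_0$ via integration by parts and a split at the scale $r_1$ on which the $\eta$-control kicks in. Once that is handled, everything else follows from the three Tolsa-type tools (weak-type, semiadditivity, and the $\gamma/\gamma_+$ machinery already invoked in Corollary \ref{ZeroAC}).
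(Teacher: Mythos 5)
Your proof is correct. Note that the paper does not prove this lemma itself but cites it as \cite[Lemma 3.2]{acy19}; your argument is the standard one used there and elsewhere in the Cauchy transform literature: split $\nu$ at scale $2\delta$, control the far part by the vanishing linear density at $\lambda_0$ via the smoothness of the kernel and an integration by parts against $N(r)=|\nu|(\D(\lambda_0,r))$, control the near part by Tolsa's weak $(1,1)$ estimate for the maximal Cauchy transform, and assemble via semiadditivity and the $\gamma$-null exceptional set from Corollary \ref{ZeroAC}. One cosmetic point: to make $\mathcal C(\nu_2)(\lambda_0)=\mathcal C_{2\delta}(\nu)(\lambda_0)$ literally true you should cut with the open disk (i.e.\ take $\nu_2=\nu|_{\{|w-\lambda_0|>2\delta\}}$), or simply observe that the circle can carry mass for at most countably many radii, which does not affect a $\delta\to 0$ limit; this does not change the argument.
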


Let $\mathbb R$ be the real line. Let $A:\mathbb R\rightarrow \mathbb R$ be a Lipschitz function. The Lipschitz graph $\Gamma$ of $A$ is defined by 
 $\Gamma = \left \{(x,A(x)):~x\in \mathbb R \right \}$.
We define, for $\lambda \in \Gamma,$ the open upper cone 
\[
	 \ UC (\lambda, \alpha ) = \left \{z \in \mathbb C :~ |Re(z) -  Re(\lambda )| < \alpha (Im(z) -  Im(\lambda ))\right \}
 \]
	and the open lower cone 
\[
	 \ LC (\lambda, \alpha ) = \left \{z \in \mathbb C :~ |Re(z) -  Re(\lambda )| < - \alpha (Im(z) -  Im(\lambda ))\right \}.
 \]
	Set $UC (\lambda, \alpha, \delta ) = UC (\lambda, \alpha)\cap \D(\lambda ,  \delta)$	and
	$LC (\lambda, \alpha, \delta ) = LC (\lambda, \alpha)\cap \D(\lambda ,  \delta)$.
		
If $\alpha < \frac{1}{\|A'\|_\infty},$ then, for almost all $\lambda\in \Gamma,$ there exists $\delta > 0$ such that 
	\[
	 \ UC (\lambda, \alpha, \delta) \subset U_\Gamma:=\left\{z\in \mathbb C:~ Im(z) > A(Re(z))\right\}
	 \]
	 and 
	 \[
	 \ LC (\lambda, \alpha, \delta)\subset L_\Gamma:=\left\{z\in \mathbb C:~ Im(z) < A(Re(z))\right\}.
	 \]
	We consider the usual complex-valued measure
	 \[
	 \ \dfrac{1}{2\pi i} dz_{\Gamma} = \dfrac{1 + iA'(Re(z))}{2\pi i (1 + A'(Re(z))^2)^{\frac{1}{2}}} d\mathcal H^1 |_{\Gamma} = L(z)d\mathcal H^1 |_{\Gamma}.
	 \]
Notice that
$|L(z)| = \frac{1}{2\pi}.$
Plemelj's formula for Lipschitz graphs as in \cite[Theorem 8.8]{Tol14} is the following.
	
	\begin{lemma}\label{PFLipschitz}
	Let $A : \mathbb R \rightarrow \mathbb R$ be a Lipschitz function with its graph $\Gamma$. Let $0 < \alpha < \frac{1}{\|A'\|_\infty}$. Then, for all $f\in L^1(\mathcal H^1 |_{\Gamma})$, the non-tangential limits of $\mathcal C(f dz_{\Gamma})(\lambda )$ exists for $\mathcal H^1 |_{\Gamma}-a.a.$ and the following identities hold $\mathcal H^1 |_{\Gamma}-a.a.$ for $\lambda\in \Gamma$:
	\[
	 \ \dfrac{1}{2\pi i} \lim_{z\in UC (\lambda, \alpha ) \rightarrow \lambda}\mathcal C(f dz_{\Gamma})(z) = \dfrac{1}{2\pi i}\mathcal C(f dz_{\Gamma})(\lambda) + \dfrac{1}{2} f(\lambda),
	\]
	and
	\[
	 \ \dfrac{1}{2\pi i} \lim_{z\in LC (\lambda, \alpha ) \rightarrow \lambda}\mathcal C(f dz_{\Gamma})(z) = \dfrac{1}{2\pi i}\mathcal C(f dz_{\Gamma})(\lambda) - \dfrac{1}{2} f(\lambda).
	\]
\end{lemma}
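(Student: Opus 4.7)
The plan is to reduce the problem, via standard Calder\'on--Zygmund theory on Lipschitz graphs, to the case of a smooth test function, then verify the jump formula for smooth test functions by a direct Cauchy--Pompeiu computation, and finally transfer the identity to all $f\in L^1(\mathcal H^1|_\Gamma)$ by approximation. First I would invoke the boundedness of the Cauchy transform on $\Gamma$ (the Coifman--McIntosh--Meyer theorem, as developed in \cite{Tol14}): both the truncated Cauchy operator and the non-tangential maximal operator
\[
 \ \CT^{nt}_*(\nu)(\lambda) := \sup_{z\in UC(\lambda,\alpha)}|\CT(\nu)(z)|
\]
are of weak type $(1,1)$ with respect to $\mathcal H^1|_\Gamma$. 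Combined with Corollary \ref{ZeroAC}, this yields $\mathcal H^1|_\Gamma$-a.e.\ existence of the principal value $\CT(fdz_\Gamma)(\lambda)$ for every $f\in L^1(\mathcal H^1|_\Gamma)$.

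Second I would verify the formulas for $f=\tilde f|_\Gamma$ with $\tilde f\in C^\infty_c(\C)$. Applying the Cauchy--Pompeiu formula on $U_\Gamma\cap \D(0,R)$ and letting $R\to\infty$, for $z\in U_\Gamma$ one obtains
\[
 \ \dfrac{1}{2\pi i}\int_\Gamma \dfrac{\tilde f(w)}{w-z}\,dw \;=\; \tilde f(z) + G(z), \qquad G(z):= \dfrac{1}{\pi}\int_{U_\Gamma}\dfrac{\bar\partial \tilde f(w)}{w-z}\,d\area(w),
\]
while for $z\in L_\Gamma$ (where $z\notin U_\Gamma$) the same identity gives $\frac{1}{2\pi i}\int_\Gamma \frac{\tilde f(w)}{w-z}\,dw = G(z)$. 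Since $\bar\partial \tilde f$ is bounded and compactly supported, $G$ is continuous on $\C$. Thus at every $\lambda\in\Gamma$ the non-tangential limits from $U_\Gamma$ and $L_\Gamma$ exist and differ by $\tilde f(\lambda)=f(\lambda)$; averaging with the (a.e.\ existing) principal value identifies the two limits as $\frac{1}{2\pi i}\CT(fdz_\Gamma)(\lambda)\pm\frac{1}{2}f(\lambda)$, which is the desired identity on this dense class.

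Third I would pass to a general $f\in L^1(\mathcal H^1|_\Gamma)$ by approximation. Choose smooth $f_n$ with $\|f-f_n\|_{L^1(\mathcal H^1|_\Gamma)}\to 0$. The non-tangential oscillation of $\CT((f-f_n)dz_\Gamma)$ at $\lambda$ is dominated by $2\,\CT^{nt}_*((f-f_n)dz_\Gamma)(\lambda)$, and the weak-type bound from the first step gives
\[
 \ \mathcal H^1\left\{\lambda\in\Gamma:\;\CT^{nt}_*((f-f_n)dz_\Gamma)(\lambda)>\beta\right\} \;\le\; \dfrac{C\|f-f_n\|_{L^1}}{\beta}\;\longrightarrow\;0
\]
for each $\beta>0$. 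Combined with step two, this delivers $\mathcal H^1|_\Gamma$-a.e.\ existence of the non-tangential limits of $\CT(fdz_\Gamma)$, and the formulas are inherited from the smooth case. The main obstacle is the weak-type $(1,1)$ estimate for the non-tangential maximal Cauchy operator on a Lipschitz graph; this is the nontrivial input and I would cite it from \cite{Tol14} rather than reprove it. Once it is in hand, the Cauchy--Pompeiu jump computation and the density argument are routine.
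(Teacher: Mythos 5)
The paper does not prove Lemma \ref{PFLipschitz}; it simply cites it as \cite[Theorem~8.8]{Tol14}, so there is no in-paper proof to compare against. Your proposal sketches the standard route (Calder\'on--Zygmund/Coifman--McIntosh--Meyer input for a.e.\ existence and weak-$(1,1)$ control, Cauchy--Pompeiu for smooth data, then density), and the overall plan is sound; the weak-type bound for the non-tangential maximal Cauchy operator is indeed the one genuinely nontrivial external input, and it is reasonable to cite it.

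There is, however, a real gap in your second step. Cauchy--Pompeiu gives you that, for smooth compactly supported $\tilde f$, the non-tangential limit from $U_\Gamma$ is $\tilde f(\lambda)+G(\lambda)$ and the one from $L_\Gamma$ is $G(\lambda)$, so the two limits differ by $\tilde f(\lambda)$. But the lemma asserts more: each one-sided limit equals the \emph{principal value} $\frac{1}{2\pi i}\mathcal C(\tilde f\,dz_\Gamma)(\lambda)$ plus or minus $\frac12\tilde f(\lambda)$, i.e.\ that the principal value is exactly the \emph{average} $\frac12\tilde f(\lambda)+G(\lambda)$ of the two one-sided limits. You write that ``averaging with the (a.e.\ existing) principal value identifies the two limits,'' but nothing you have established tells you that the principal value equals that average; this is not a formal consequence of the two one-sided limits existing and of the p.v.\ existing. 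It is a separate computation: one must show that, at $\mathcal H^1|_\Gamma$-a.e.\ $\lambda$, the truncated integral $\int_{\Gamma\setminus \D(\lambda,\epsilon)}\frac{\tilde f(w)}{w-\lambda}\,dw$ differs from the one-sided limit coming from $U_\Gamma$ by $-\tfrac12\cdot 2\pi i\,\tilde f(\lambda)+o(1)$ as $\epsilon\to 0$ (the ``half-residue'' picked up by the small arc of $\partial\D(\lambda,\epsilon)$ inside $U_\Gamma$). On a Lipschitz graph this is not immediate: it needs the existence of an approximate tangent at $\mathcal H^1$-a.e.\ point of $\Gamma$ and an estimate showing that the arc contribution converges to exactly half the residue along that tangent. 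Without this step your argument proves only that the two non-tangential limits exist and differ by $f(\lambda)$, not the stated identities involving $\mathcal C(f\,dz_\Gamma)(\lambda)$. Once this sub-lemma is supplied for smooth $\tilde f$, your density argument in the third step transfers it correctly to $f\in L^1(\mathcal H^1|_\Gamma)$.
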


\begin{lemma} \label{LGCTL2Bounded}
 Let $A:\mathbb R\rightarrow \mathbb R$ be a Lipschitz function with graph $\Gamma$. 
 Then 
 $N_2(\mathcal H^1 |_\Gamma) < \i,$
 while $N_2(\mathcal H^1 |_\Gamma)$ only depends on $\|A'\|_\infty$.
Consequently, there is a constant $c_\Gamma > 0$ that depends only on $\|A'\|_\infty$ such that for $E\subset \Gamma$,
 \begin{eqnarray}\label{HACEq}
 \ c_\Gamma \mathcal H^1 |_\Gamma (E) \le \gamma (E) \le \mathcal H^1 |_\Gamma (E). 
 \end{eqnarray}	
\end{lemma}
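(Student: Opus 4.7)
The lemma splits naturally into two parts. For the finiteness of $N_2(\mathcal H^1|_\Gamma)$ with a bound depending only on $\|A'\|_\infty$, unwinding the definition shows that this is precisely the uniform $L^2(\mathcal H^1|_\Gamma)$-boundedness of the truncated Cauchy integrals along the Lipschitz graph $\Gamma$. This is the celebrated Coifman--McIntosh--Meyer theorem, and I would cite \cite[Chapter 3]{Tol14} where the operator norm is tracked and seen to depend only on $\|A'\|_\infty$. This is the deep analytic input, and I would use it as a black box; it really is the hard part, but it is well-established.

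Write $M := N_2(\mathcal H^1|_\Gamma)$. The upper inequality $\gamma(E) \le \mathcal H^1|_\Gamma(E)$ in \eqref{HACEq} is Painlev\'e's classical estimate $\gamma \le \mathcal H^1$ and uses nothing specific about $\Gamma$.

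For the substantive lower inequality, the plan is to produce, for each compact $E \subset \Gamma$, a competitor in the definition of $\gamma_+(E)$ whose mass is a definite fraction of $\mathcal H^1|_\Gamma(E)$. First I would record the monotonicity $N_2(\mathcal H^1|_E) \le M$: any unit vector $f$ in $L^2(\mathcal H^1|_E)$ extends by zero to a unit vector in $L^2(\mathcal H^1|_\Gamma)$ yielding the same measure $f\,\mathcal H^1|_E$, so the truncated Cauchy bound transfers. Hence $\eta := M^{-1}\mathcal H^1|_E$ satisfies $N_2(\eta) \le 1$, and Proposition \ref{GammaPlusThm}(3) produces a Borel weight $w$ with $0 \le w \le 1$, $\text{spt}(w) \subset E$, $\eta(E) \le 2\int w\,d\eta$, and $\sup_{\epsilon > 0}\|\mathcal C_\epsilon(w\eta)\|_{L^\infty(\C)} \le C_3$. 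Passing to the principal value via Corollary \ref{ZeroAC}, the measure $\mu := C_3^{-1} w\eta$ lies in $M_0^+(E)$ with $\|\mathcal C(\mu)\|_{L^\infty(\C)} \le 1$, so it is admissible for $\gamma_+(E)$. Since $\gamma \ge \gamma_+$,
\[
\gamma(E) \;\ge\; \gamma_+(E) \;\ge\; \|\mu\| \;\ge\; \frac{\eta(E)}{2C_3} \;=\; \frac{\mathcal H^1|_\Gamma(E)}{2 C_3 M},
\]
so $c_\Gamma := (2 C_3 M)^{-1}$ works and depends only on $\|A'\|_\infty$. Inner regularity of $\mathcal H^1$ and $\gamma$ extends the bound from compact to arbitrary Borel $E \subset \Gamma$. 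Apart from the appeal to Coifman--McIntosh--Meyer, the only step requiring verification is the monotonicity of $N_2$ under restriction, which is routine.
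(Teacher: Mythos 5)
Your proposal is correct and follows essentially the same route as the paper: cite Coifman--McIntosh--Meyer (via \cite[Theorem 3.11]{Tol14}) for the $N_2$ bound depending only on $\|A'\|_\infty$, invoke Proposition \ref{GammaPlusThm}(3) after restricting to $E$ to manufacture a positive measure on $E$ with uniformly bounded truncated Cauchy transforms, and use Painlev\'e's $\gamma \le \mathcal H^1$ for the upper bound. The one small divergence is bookkeeping: you close the lower-bound argument with the elementary inclusion $\gamma \ge \gamma_+$ (indeed all you need, since the measure $C_3^{-1}w\eta$ is already admissible for $\gamma_+$), whereas the paper's terse proof cites Theorem \ref{TolsaTheorem}(1) at this point; your version is actually the tighter chain of implications, and your explicit check that $N_2$ is monotone under restriction to $E$ and scales linearly is a detail the paper leaves implicit.
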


\begin{proof}  See \cite{cmm82} or \cite[Theorem 3.11]{Tol14} for $N_2(\mathcal H^1 |_\Gamma) < \i.$ Therefore, from Proposition \ref{GammaPlusThm} (3) and Theorem \ref{TolsaTheorem} (1), we conclude that $c_\Gamma \mathcal H^1 |_\Gamma (E) \le \gamma (E)$ holds. See \cite[Theorem 1.21]{Tol14} for $\gamma (E) \le \mathcal H^1 |_\Gamma (E).$
\end{proof}

We now improve Lemma \ref{PFLipschitz} with the following theorem.

\begin{theorem}\label{GPTheorem1}
(Plemelj's Formula for an arbitrary measure) Let $\alpha < \frac{1}{\|A'\|_\infty}$ and $\nu\in M_0(\C)$. Suppose that $\nu = h\mathcal H^1 |_{\Gamma} + \nu_s$ is the Radon-Nikodym 
decomposition with respect to $\mathcal H^1 |_{\Gamma}$, where $h\in L^1(\mathcal H^1 |_{\Gamma})$ and $\nu_s\perp \mathcal H^1 |_{\Gamma}$. Then there exists a subset $\mathcal Q\subset \mathbb C$ with $\gamma(\mathcal Q) = 0$, such that the following hold:

(a) $\mathcal C(\nu ) (\lambda_0) = \lim_{\epsilon\rightarrow 0} \mathcal C_{\epsilon}(\nu)(\lambda_0)$ exists for $\lambda_0\in \mathbb C\setminus \mathcal Q$.

(b) For $\lambda_0 \in \Gamma \setminus \mathcal Q$ and $\epsilon > 0$, $v^+(\nu, \Gamma)( \lambda_0) := \mathcal C(\nu ) (\lambda_0) + \frac{h(\lambda_0)}{2L(\lambda_0)},$
\[
 \ \lim_{\delta \rightarrow 0} \dfrac{\gamma(U_\Gamma \cap \D(\lambda, \delta ) \cap \{|\mathcal{C}(\nu )(\lambda ) - v^+(\nu, \Gamma)( \lambda_0)| > \epsilon\})} {\delta}= 0.  
 \]
 
(c) For $\lambda_0 \in \Gamma \setminus \mathcal Q$ and $\epsilon > 0$, $v^-(\nu, \Gamma)( \lambda_0) := \mathcal C(\nu ) (\lambda_0) - \frac{h(\lambda_0)}{2L(\lambda_0)},$
\[
 \ \lim_{\delta \rightarrow 0} \dfrac{\gamma(L_\Gamma \cap \D(\lambda, \delta ) \cap \{|\mathcal{C}(\nu )(\lambda ) - v^-(\nu, \Gamma)( \lambda_0)| > \epsilon\})} {\delta}= 0.  
 \]

(d) For $\lambda_0 \in \Gamma \setminus \mathcal Q$ and $\epsilon > 0$, $v^0(\nu, \Gamma)( \lambda_0) := \mathcal C(\nu ) (\lambda_0),$
\[
 \ \lim_{\delta \rightarrow 0} \dfrac{\gamma(\Gamma \cap \D(\lambda, \delta ) \cap \{|\mathcal{C}(\nu )(\lambda ) - v^0(\nu, \Gamma)( \lambda_0)| > \epsilon\})} {\delta}= 0.  
 \]
\end{theorem}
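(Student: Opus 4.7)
The plan is to separate $\nu$ into its $\mathcal H^1|_\Gamma$-absolutely continuous and singular parts and invoke a known tool for each: Lemma \ref{CauchyTLemma} for $\nu_s$ and Lemma \ref{PFLipschitz} for the density part. Part (a) follows directly from Corollary \ref{ZeroAC}. For $\nu_s$, the measure $\mathcal H^1|_\Gamma$ has linear growth and $\mathcal H^1|_\Gamma\perp|\nu_s|$, so Lemma \ref{RNDecom2}(c) yields $\mathcal H^1|_\Gamma(\mathcal{ND}(\nu_s))=0$; combined with the equivalence $\gamma|_\Gamma\asymp\mathcal H^1|_\Gamma$ from Lemma \ref{LGCTL2Bounded}, this forces $\Theta_{\nu_s}(\lambda_0)=0$ for $\gamma$-a.e.\ $\lambda_0\in\Gamma$. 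Lemma \ref{CauchyTLemma} then makes $\mathcal C(\nu_s)$ $\gamma$-continuous at such $\lambda_0$, so its restricted $\gamma$-limits from $U_\Gamma$, $L_\Gamma$, and $\Gamma$ all equal $\mathcal C(\nu_s)(\lambda_0)$, matching the claimed $v^+,v^-,v^0$ since the density of $\nu_s$ vanishes.

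For the absolutely continuous part, I rewrite $h\,d\mathcal H^1|_\Gamma=\frac{h}{2\pi iL}\,dz_\Gamma$ and apply Lemma \ref{PFLipschitz} with $f=h/(2\pi iL)\in L^1(dz_\Gamma)$. This delivers the non-tangential limits along $UC(\lambda_0,\alpha)$ and $LC(\lambda_0,\alpha)$ for $\mathcal H^1|_\Gamma$-a.e.\ $\lambda_0$, with jumps $\pm h(\lambda_0)/(2L(\lambda_0))$ exactly matching $v^\pm$. Part (d) then admits the cleanest conversion to a $\gamma$-limit: the boundary trace $\mathcal C(h\,d\mathcal H^1|_\Gamma)|_\Gamma$ lies in $L^2_{\mathrm{loc}}(\mathcal H^1|_\Gamma)$ by Lemma \ref{LGCTL2Bounded}, so Lebesgue differentiation bounds the $\mathcal H^1|_\Gamma$-measure of $\{|\mathcal C-\mathcal C(\lambda_0)|>\epsilon\}\cap\Gamma\cap\D(\lambda_0,\delta)$ by $o(\delta)$, which Lemma \ref{LGCTL2Bounded} upgrades to $\gamma=o(\delta)$.

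The main obstacle is (b) and (c): promoting the non-tangential convergence inside the cone to a $\gamma$-limit over the full sub-domain $U_\Gamma\cap\D(\lambda_0,\delta)$, since the complementary wedge $U_\Gamma\cap\D(\lambda_0,\delta)\setminus UC(\lambda_0,\alpha)$ has $\gamma$-capacity of order $\delta$, not $o(\delta)$, and cannot simply be discarded. The plan is to fix $\lambda_0$ to be a Lebesgue point of $h$ and decompose, at each scale $\delta$,
\[
 h\,d\mathcal H^1|_\Gamma = h(\lambda_0)\chi_{\Gamma\cap\D(\lambda_0,2\delta)}\,d\mathcal H^1|_\Gamma + (h-h(\lambda_0))\chi_{\Gamma\cap\D(\lambda_0,2\delta)}\,d\mathcal H^1|_\Gamma + h\chi_{\Gamma\setminus\D(\lambda_0,2\delta)}\,d\mathcal H^1|_\Gamma.
\]
The far-field summand produces an analytic, hence $\gamma$-continuous, function on $\D(\lambda_0,\delta)$. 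The perturbation piece has $L^1(\mathcal H^1|_\Gamma)$-mass $o(\delta)$ at a Lebesgue point, so Tolsa's weak-type inequality Theorem \ref{TolsaTheorem}(3) yields $\gamma(\{\mathcal C_*>\epsilon\})=o(\delta/\epsilon)$, absorbing this piece into the allowed bad set. The hardest subproblem is the constant piece, for which the plan is to use Rademacher's theorem to assume $A$ is differentiable at $\lambda_0$, replace $\Gamma$ locally by its tangent line, apply the classical half-plane Plemelj formula to the straight-line model, and control the discrepancy between $\Gamma$ and the tangent line by $\|A'\|_\infty$ together with Theorem \ref{TolsaTheorem}(3), ultimately pushing it into the $o(\delta)$ bad set.
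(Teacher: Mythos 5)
Your reduction to $\nu = h\,d\mathcal H^1|_\Gamma$ via Lemma \ref{RNDecom2}(c) and Lemma \ref{CauchyTLemma}, and your use of Corollary \ref{ZeroAC} for (a), match the paper. Two remarks on (d): your claim that $\mathcal C(h\,d\mathcal H^1|_\Gamma)|_\Gamma \in L^2_{\mathrm{loc}}(\mathcal H^1|_\Gamma)$ does not follow from Lemma \ref{LGCTL2Bounded}, since $h$ is only in $L^1$; the paper sidesteps this by writing $h = h(\lambda_0) + (h - h(\lambda_0))$, applying the $L^2$ Lebesgue point argument to $h(\lambda_0)\,\mathcal C\mathcal H^1|_\Gamma$ and Lemma \ref{CauchyTLemma} to the deviation (which has zero density at $\lambda_0$). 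This is minor and fixable.

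The genuine gap is in (b)/(c), exactly at the step you flag as hardest. Your plan — freeze $h$ at $\lambda_0$, rectify $\Gamma$ to its tangent line, use the flat Plemelj formula, and push the discrepancy into an $o(\delta)$ bad set via Theorem \ref{TolsaTheorem}(3) — cannot close, because the discrepancy measure between the curved arc and the tangent segment on $\D(\lambda_0,2\delta)$ has total variation of order $\delta$ (not $o(\delta)$). Tolsa's weak-type bound then only gives $\gamma(\{\mathcal C_* > a\}) \lesssim \delta/a$, which is of the same order as the $\gamma$-capacity of the whole half-disk and thus useless. One cannot improve this by working away from a tube around $\Gamma \cup T_{\lambda_0}$, since that tube itself carries $\gamma \sim \delta$. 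The paper avoids any pointwise estimate of the Cauchy transform of the local piece: it uses Lemma \ref{PFLipschitz} to get cone convergence at \emph{all} nearby good boundary points $\lambda \in A_n$ (not just $\lambda_0$), picks $\lambda_0$ to be a Lebesgue point of $\chi_{A_n}$ and $h/L$ and of $\mathcal C\nu$ on $\Gamma$, and then covers $U_\Gamma\cap\D(\lambda_0,\delta)$ by the sawtooth $B_\delta = \bigcup_{\lambda\in E}\overline{UC(\lambda,\alpha,1/n)}$ over the good arc $E$. On $B_\delta$ the cone estimates apply directly; the complement $U_\Gamma\cap\D(\lambda_0,\delta)\setminus B_\delta$ decomposes into components $G_k$ whose diameters sum to at most a constant times $\mathcal H^1(O_\delta) < \epsilon\delta$, where $O_\delta$ is the exceptional boundary arc. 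By semiadditivity, $\gamma$ of the complement is $O(\epsilon\delta)$. This geometric sawtooth step is the key idea your proposal is missing.
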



As $\nu$ is compactly supported, 
 We will just consider the portion  of the graph of $\Gamma$ that lies in $\{z:~ |Re(z)| < r\}$ for some $r > 0.$
Since $\nu_s\perp \mathcal H^1|_{\Gamma}$, using Lemma \ref{RNDecom2} (c) and Lemma \ref{CauchyTLemma}, we conclude that (b), (c), and (d) holds for the measure $\nu_s$. So we may assume that $\nu = hd\mathcal H^1 |_{\Gamma}$.

\begin{proof}
(Theorem \ref{GPTheorem1} (d)): Since $\gamma |_{\Gamma} \approx \mathcal H^1 |_{\Gamma}$ by \eqref{HACEq}, we use $\mathcal H^1 |_{\Gamma}$ instead of $\gamma$ in the proof.

From Lemma \ref{LGCTL2Bounded}, we see  that 
the Cauchy transform of $\mathcal H^1 |_{\Gamma}$ is bounded on $L^2(\mathcal H^1 |_{\Gamma})$. So we get $\mathcal C\mathcal H^1 |_{\Gamma}\in L^2(\mathcal H^1 |_{\Gamma})$ as $\Gamma \subset \{z:~ |Re(z)| < r\}$. Also $h\in L^1(\mathcal H^1 |_{\Gamma})$. Let $\lambda _0$ be a Lebesgue point of $h$ and $\mathcal C\mathcal H^1 |_{\Gamma}$. That is,
\begin{eqnarray}\label{LPoint1}
 \ \lim_{\delta\rightarrow 0} \dfrac{1}{\delta}\int_{\D(\lambda_0,\delta)} |h(z) - h(\lambda_0)|d\mathcal H^1 |_{\Gamma}(z) = 0 
\end{eqnarray}
and 
\begin{eqnarray}\label{LPoint2}
 \ \lim_{\delta\rightarrow 0} \dfrac{1}{\delta}\int_{\D(\lambda_0,\delta)} |\mathcal C\mathcal H^1 |_{\Gamma} (z) - \mathcal C\mathcal H^1 |_{\Gamma} (\lambda_0)|d\mathcal H^1 |_{\Gamma}(z) = 0 
\end{eqnarray}
as $\mathcal H^1 |_{\Gamma}(\D(\lambda_0,\delta)) \approx \delta.$  
 
For $\epsilon > 0$, we get (assuming $h(\lambda_0)\ne 0$):
 \[
 \ \begin{aligned}
 \ & \left \{|\mathcal C\nu (\lambda) - \mathcal C\nu (\lambda_0) | > \epsilon \right \}\\
 \ \subset & \left \{|\mathcal C((h-h(\lambda_0))\mathcal H^1 |_\Gamma )(\lambda) - \mathcal C((h-h(\lambda_0))\mathcal H^1 |_\Gamma )(\lambda_0) |  > \frac{\epsilon}{2}\right\} \\
 \ &\bigcup \left \{|\mathcal C\mathcal H^1 |_{\Gamma} (\lambda) - \mathcal C\mathcal H^1 |_{\Gamma} (\lambda_0) | > \frac{\epsilon}{2|h(\lambda_0)|} \right \}.
 \ \end{aligned}
 \]
It follows from  \eqref{LPoint2} that 
 \[
 \ \lim_{\delta\rightarrow 0} \dfrac{\mathcal H^1 |_{\Gamma} \left ( \left \{|\mathcal C\mathcal H^1 |_{\Gamma} (\lambda) - \mathcal C\mathcal H^1 |_{\Gamma} (\lambda_0) | > \dfrac{\epsilon}{2|h(\lambda_0)|} \right \}\cap \D(\lambda_0, \delta)\right)}{\delta} = 0. 
 \]
Therefore, we may assume that $h(\lambda_0) = 0$. In this case, $\nu$ satisfies the assumptions of Lemma \ref{CauchyTLemma} as \eqref{LPoint1} holds and we 
get 
(d) because $\mathcal C(\nu)(z)$ is $\gamma$-continuous at $\lambda_0$ by Lemma \ref{CauchyTLemma}.
\end{proof}

For a complex-valued function $f$ defined on $\C,$ recall that 
\[
\ \mathcal N(f) = \{\lambda: ~ f(\lambda) \ne 0 \},~\mathcal Z(f)  = \{\lambda: ~ f(\lambda) = 0 \}
\]

\begin{proof}
(Theorem \ref{GPTheorem1} (b)):
For $a>0$, let $A_n$ be the set of $\lambda\in \mathcal N(h)$
satisfying 
 \[
 \ \left |\mathcal C (\nu)(z) - \mathcal C (\nu)(\lambda) - \frac{h(\lambda)}{2L(\lambda)} \right | < \dfrac{a}{2}
 \]
for $z\in UC(\lambda, \alpha, \frac{1}{n})$.
 Then from Lemma \ref{PFLipschitz}, we obtain
 \begin{eqnarray}\label{ANEq}
 \ \mathcal H^1 |_{\Gamma} \left (\mathcal N(h) \setminus \cup_{n=1}^\infty A_n \right ) = 0.
 \end{eqnarray}
It is clear that there exists $B_n \subset A_n$ with $\mathcal H^1(B_n) = 0$ such that for each $\lambda_0\in A_n\setminus B_n$, the following two properties hold.
\newline
(1) $\lambda_0$ is a Lebesgue point of $\chi_{A_n}$ and $\frac{h(\lambda)}{L(\lambda)},$ where $\chi_B$ is the characteristic function of the set $B,$ 
that is,
 \[
 \ \lim_{\delta\rightarrow 0} \dfrac{\mathcal H^1 |_{\Gamma}(A_n^c \cap \D(\lambda_0, \delta))}{\delta} = 0
 \]
and
 \[
 \ \lim_{\delta\rightarrow 0 } \dfrac{\mathcal H^1 |_{\Gamma} \left ( \left \{\lambda : \left |\frac{h(\lambda)}{L(\lambda)} - \frac{h(\lambda_0)}{L(\lambda_0)} \right | \ge \frac{a}{2} \right  \} \cap \D(\lambda _0, \delta) \right)}{\delta} = 0;
 \]
 \newline
(2) $\lambda_0$ satisfies (d), that is, 
 \[
 \ \lim_{\delta\rightarrow 0 } \dfrac{\mathcal H^1 |_{\Gamma} \left ( \{\lambda : |\mathcal C(\nu)(\lambda) - \mathcal{C}(\nu)(\lambda _0)| \ge \frac{a}{4} \} \cap \D(\lambda _0, \delta) \right)}{\delta} = 0.
 \]
 
Therefore,
 \[
 \ \lim_{\delta\rightarrow 0 } \dfrac{\mathcal H^1 |_{\Gamma} \left ( (A_n^c \cup \{\lambda : |v^+(\nu, \Gamma)( \lambda) - v^+(\nu, \Gamma)( \lambda_0)| \ge \frac{a}{2} \}) \cap \D(\lambda _0, \delta) \right)}{\delta} = 0.
 \]
For $\epsilon > 0$, there exists $\delta_0 > 0$ ($\delta_0 < \frac{1}{2n}$) and an open set $O_\delta \subset \D(\lambda _0, \delta)$ for $\delta < \delta_0$ such that 
 \[
 \ \left (A_n^c \cup \left \{\lambda : |v^+(\nu, \Gamma)( \lambda) - v^+(\nu, \Gamma)( \lambda_0)| \ge \frac{a}{2} \right \} \right ) \cap \D(\lambda _0, \delta) \subset O_\delta
 \]
and $\mathcal H^1 |_{\Gamma} (O_\delta) < \epsilon\delta$. Let $E = \Gamma \cap (\overline{\D(\lambda_0,\delta)}\setminus O_\delta)$. Denote
 \[
 \ B_\delta = \bigcup_{\lambda\in E} \overline{UC \left (\lambda, \alpha,\frac{1}{n}\right )}.
 \]
It is easy to check that $B_\delta$ is closed. By construction, we see that
 \[
 \ |\mathcal C (\nu)(z) - v^+(\nu, \Gamma)( \lambda_0)| < a
 \]
for $z\in B_\delta\setminus \Gamma$. Write 
 \[
 \ U_\Gamma \cap \D(\lambda_0, \delta) \setminus B_\delta = \cup _{k=1}^\infty G_k
 \]
where $G_k$ is a connected component. It is clear that $diam(G_k) \approx \mathcal H^1 |_{\Gamma} (\overline{G_k}\cap\Gamma)$. Let $I_k$ be the interior of $\overline{G_k}\cap\Gamma$ on $\Gamma$. Then $\mathcal H^1 |_{\Gamma} (\overline{G_k}\cap\Gamma) = \mathcal H^1 |_{\Gamma} (I_k)$ and $\cup_k I_k \subset O_\delta\cap \Gamma$. Hence, from Theorem \ref{TolsaTheorem} (2), we get 
 \[
 \ \gamma ( U_\Gamma \cap \D(\lambda_0, \delta) \setminus B_\delta ) \le A_T\sum_{k=1}^\infty \gamma (G_k) \le A_T\sum_{k=1}^\infty diam (G_k) \le A_TC_9\epsilon\delta.
 \]

This implies
 \begin{eqnarray}\label{ANEq1}
 \ \lim_{\delta\rightarrow 0 } \dfrac{\gamma \left ( \{\lambda : |\mathcal C (\nu)(\lambda) - v^+(\nu, \Gamma)( \lambda_0)| > a \} \cap \D(\lambda _0, \delta)\cap U_\Gamma \right)}{\delta} = 0.
 \end{eqnarray}
So from \eqref{ANEq}, we have proved that there exists $\mathcal Q_a$ with $\mathcal H^1 (\mathcal Q_a) = 0$ such that for each $\lambda_0\in \mathcal N(h) \setminus \mathcal Q_a$, \eqref{ANEq1} holds. Set $\mathcal Q_0 = \cup_{m=1}^\infty \mathcal Q_{\frac{1}{m}}$. Then \eqref{ANEq1} holds for $\lambda_0\in \mathcal N(h) \setminus \mathcal Q_0$ and all $a > 0$. This proves (b).
\end{proof}

The proof of (c) is the same as (b).
We point out if $\Gamma$ is a rotation of a Lipschitz graph (with rotation angle $\beta$ and Lipschitz function $A$) at $\lambda_0$, then
 \begin{eqnarray}\label{VPlusBeta}
 \ v^+(\nu, \Gamma, \beta)( \lambda) := \mathcal C(\nu ) (\lambda) + \frac{e^{-i\beta}h(\lambda)}{2L((\lambda - \lambda_0)e^{-i\beta} + \lambda_0)}.
 \end{eqnarray}
Similarly,
 \begin{eqnarray}\label{VMinusBeta}
 \ v^-(\nu, \Gamma, \beta)( \lambda) := \mathcal C(\nu ) (\lambda) - \frac{e^{-i\beta}h(\lambda)}{2L((\lambda - \lambda_0)e^{-i\beta} + \lambda_0)}.
 \end{eqnarray}
 Hence, $v^+(\nu, \Gamma)( \lambda) = v^+(\nu, \Gamma, 0)( \lambda)$ and $v^-(\nu, \Gamma)( \lambda) = v^-(\nu, \Gamma, 0)( \lambda).$

 \section{\textbf{Definition of  the non-removable boundary and the removable set}}
  
From now on, we fix a compact subset $K\subset \C,$ $\mu\in M_0^+(K),$ $1 \le t < \i,$ $\frac 1t + \frac 1s = 1,$ and 
\[
\ \Lambda = \{g_j\} \subset R^t(K,\mu)^\perp := \left \{g\in L^s(\mu): ~ \int fg d\mu = 0 \text{ for } f\in \text{Rat}(K)  \right \}
\]
is a $L^s(\mu)$ norm dense subset.
 
 The following definition generalizes \cite[Definition 4.3]{cy22}.
 
 \begin{definition}\label{R0ForRILambda}
 The non-removable boundary of zero density for $\Lambda$ is defined by
 \[
 \ \mathcal F_0 (\Lambda) = \bigcap_{j = 1}^\infty \{z\in \mathcal {ZD}(\mu):~ \mathcal C(g_j\mu)(z) = 0\}
 \]
 and the removable set of zero density for $\Lambda$ is defined by
 \[
 \ \mathcal R_0 (\Lambda) = \bigcup_{j = 1}^\infty \{z\in \mathcal {ZD}(\mu):~ \mathcal C(g_j\mu)(z) \ne 0\}.
 \]
 \end{definition}
 For the non-zero density part of $\mu,$ we need to introduce further notation. Let $\Gamma_0$ be a 
  Lipschitz graph with rotation angle $\beta_0,$ define
 \[
 \ \mathcal Z_+(\Lambda, \Gamma_0) = \bigcap_{j = 1}^\infty \{z\in \Gamma_0: ~ v^+(g_j\mu, \Gamma_0, \beta_0)(z) = 0 \}, 
 \]
 \[
 \ \mathcal Z_-(\Lambda, \Gamma_0) = \bigcap_{j = 1}^\infty \{z\in \Gamma_0: ~ v^-(g_j\mu, \Gamma_0, \beta_0)(z) = 0 \},
 \]
 \[
 \ \mathcal N_+(\Lambda, \Gamma_0) = \bigcup_{j = 1}^\infty \{z\in \Gamma_0: ~ v^+(g_j\mu, \Gamma_0, \beta_0)(z) \ne 0 \},
 \]
 and 
 \[
 \ \mathcal N_-(\Lambda, \Gamma_0) = \bigcup_{j = 1}^\infty \{z\in \Gamma_0: ~ v^-(g_j\mu, \Gamma_0, \beta_0) \ne 0 \}.
 \]
 The following lemma, which describes the decomposition of $\mu\in M_0^+(K),$ is important for our definitions.
 
 \begin{lemma}\label{GammaExist}
Let $K$ be a compact subset and $\mu\in M_0^+(K).$ Then there is a sequence of Lipschitz functions $A_n: \mathbb R\rightarrow \mathbb R$ and 
such that their  graphs $\Gamma_n$ with rotation angles $\beta_n$ satisfy
the following properties:

(1) Let $\Gamma = \cup_n \Gamma_n$. Then
$\mu = h\mathcal H^1 |_{\Gamma} + \mu_s$ is the Radon-Nikodym decomposition with respect to $\mathcal H^1 |_{\Gamma}$, where $h\in L^1(\mathcal H^1 |_{\Gamma})$ and $\mu_s\perp \mathcal H^1 |_{\Gamma};$

(2) $\mathcal {ND}(\mu ) \approx \mathcal N (h), ~\gamma-a.a.;$

(3) $\Theta_{\mu_s}(z) = 0, ~\gamma-a.a..$ 
\end{lemma}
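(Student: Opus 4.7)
The plan is to decompose $\mu$ according to its upper $1$-density, peeling off the rectifiable part to form $\Gamma$ and leaving the rest as $\mu_s$. First I apply Lemma~\ref{RNDecom2}(b) to write $\mathcal{ND}(\mu) = \cup_n \mathcal{ND}(\mu, n) \cup \mathcal Q_\mu$ with $\gamma(\mathcal Q_\mu) = 0$, noting that each $\mathcal{ND}(\mu, n)$ has finite $\mathcal H^1$ measure and carries $\mu$ with a density $g$ bounded above and below by positive constants.

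Next I would apply two results of planar geometric measure theory to each $\mathcal{ND}(\mu, n)$: the Besicovitch structure theorem decomposes $\mathcal{ND}(\mu, n) = R_n \cup U_n$ into a $1$-rectifiable part $R_n$ (covered, up to an $\mathcal H^1$-null set, by a countable union of rotated Lipschitz graphs $\Gamma_{n,k}$, with rotation angles $\beta_{n,k}$ dictated by the approximate tangent directions along $R_n$) and a purely $1$-unrectifiable part $U_n$; while David's theorem on Vitushkin's conjecture (as in \cite{Tol14}) gives $\gamma(U_n) = 0$. Enumerating the $\Gamma_{n,k}$ as $\{\Gamma_m\}$ with associated Lipschitz functions $A_m$ and rotation angles $\beta_m$, and setting $\Gamma = \cup_m \Gamma_m$, the semiadditivity in Theorem~\ref{TolsaTheorem}(2) yields $\gamma(\mathcal{ND}(\mu) \setminus \Gamma) = 0$, and (1) is then simply the Radon--Nikodym decomposition of $\mu$ with respect to $\mathcal H^1|_\Gamma$.

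To prove (3), note that $\mu_s \le \mu$ gives $\mathcal{ND}(\mu_s) \subset \mathcal{ND}(\mu)$; the piece $\mathcal{ND}(\mu_s) \setminus \Gamma$ is already $\gamma$-null by the preceding step. For $\mathcal{ND}(\mu_s) \cap \Gamma$, applying Lemma~\ref{RNDecom2}(b) to $\mu_s$ shows that $\mu_s|_{\mathcal{ND}(\mu_s)}$ is absolutely continuous with respect to $\mathcal H^1$, and combined with $\mu_s \perp \mathcal H^1|_\Gamma$ this forces $\mu_s(\mathcal{ND}(\mu_s) \cap \Gamma) = 0$. The positive lower density bound on each $\mathcal{ND}(\mu_s, n)$ then yields $\mathcal H^1(\mathcal{ND}(\mu_s) \cap \Gamma) = 0$, hence $\gamma(\mathcal{ND}(\mu_s) \cap \Gamma) = 0$ via \eqref{HACEq}. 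For (2), Lebesgue differentiation on the Ahlfors $1$-regular set $\Gamma$ gives $\Theta^*_\mu(z) \ge c\, h(z)$ at $\mathcal H^1|_\Gamma$-a.e.\ $z$, so $\mathcal N(h) \subset \mathcal{ND}(\mu)$ up to an $\mathcal H^1|_\Gamma$-null (hence $\gamma$-null) set. Conversely, comparing $\mu|_{\mathcal{ND}(\mu) \cap \Gamma} = g\,\mathcal H^1|_{\mathcal{ND}(\mu) \cap \Gamma}$ with the Radon--Nikodym decomposition and invoking $\mu_s \perp \mathcal H^1|_\Gamma$ forces $h = g > 0$ $\mathcal H^1$-a.e.\ on $\mathcal{ND}(\mu) \cap \Gamma$, so $\mathcal{ND}(\mu) \subset \mathcal N(h)$ modulo a $\gamma$-null set.

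The main obstacle is Step 2, which rests on two deep inputs: the planar rectifiability structure theorem (producing the Lipschitz-graph cover with appropriate rotations) and David's theorem equating $\gamma = 0$ with pure unrectifiability on sets of finite $\mathcal H^1$-measure. Once these are invoked, the verification of (1)--(3) is essentially a bookkeeping exercise relying on Lemma~\ref{RNDecom2}, the elementary fact that $\sigma \ll \nu$ together with $\sigma \perp \nu$ implies $\sigma = 0$, and the comparison \eqref{HACEq} between $\mathcal H^1$ and $\gamma$ on Lipschitz graphs.
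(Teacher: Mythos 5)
Your proposal follows essentially the same route as the paper: reduce to the sets $\mathcal{ND}(\mu,n)$ via Lemma \ref{RNDecom2}(b), split each into rectifiable and purely unrectifiable pieces (the paper cites \cite[Theorem 1.26]{Tol14}, which is the Besicovitch decomposition you invoke), kill the unrectifiable part with David's theorem, and cover the rectifiable part by rotated Lipschitz graphs (the paper's \cite[Proposition 4.13]{Tol14}). Your verifications of (2) and (3) from $\mu_s\perp\mathcal H^1|_\Gamma$ and the density bounds in Lemma \ref{RNDecom2} are correct and simply spell out what the paper leaves as an immediate consequence.
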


\begin{proof} From Lemma \ref{RNDecom2}
(b), we have
 \[
 \ \mathcal{ND}(\mu) = \bigcup_n \mathcal{ND}(\mu, n) \cup \mathcal Q_\mu.
 \] 
Thus, we find $h$ such that $\mu |_{\mathcal{ND}(\mu, n)} = h \mathcal H^1 |_{\mathcal{ND}(\mu, n)}$ and $\mathcal H^1(\mathcal{ND}(\mu, n)) < \i.$ 
	
From \cite[Theorem 1.26]{Tol14}, $\mathcal{ND}(\mu, n) = E_r\cup E_u,$ where $E_r$ is rectifiable and $E_u$ is purely unrectifiable. From David's Theorem (see \cite{dav98} or \cite[Theorem 7.2]{Tol14}), we see that $\gamma(E_u) = 0.$
Applying \cite[Proposition 4.13]{Tol14}, we find a sequence of (rotated) Lipschitz graphs $\{\Gamma_{nm}\}$ and $E_1$ with $\mathcal H^1(E_1) = 0$ such that
$E_r \subset E_1 \cup \cup_{m=1}^\infty \Gamma_{nm}.$ 
Let $\mathcal Q = E_u\cup E_1.$ Clearly, $\gamma(\mathcal Q) = 0.$ Hence, there exists a sequence of $\{\Gamma_n\}$ such that (1), (2), and (3) hold.
\end{proof}  

We are now ready to define the non-removable boundary and removable set for non-zero density portion of $\mu,$ which extends \cite[Definition 4.6]{cy22}.

\begin{definition}\label{FPMR1Def} 
Let $\Gamma_n$ and $h$ be as in Lemma \ref{GammaExist}. Define the upper non-removable boundary for the non-zero density portion of $\mu$ as
\[
 \ \mathcal F_+ (\Lambda) :=  \bigcup_{n = 1}^\infty \mathcal Z_+(\Lambda, \Gamma_n) \cap \mathcal{ND}(\mu),
 \]
 the lower non-removable boundary for  the non-zero density portion of $\mu$ as
 \[
 \ \mathcal F_-(\Lambda) :=  \bigcup_{n = 1}^\infty \mathcal Z_-(\Lambda, \Gamma_n) \cap \mathcal{ND}(\mu),
 \]
 and the removable set for the non-zero density portion of $\mu$ as
 \[
 \ \mathcal R_1(\Lambda) :=  \bigcup_{n = 1}^\infty \left ( \mathcal N_+(\Lambda, \Gamma_n)\cap \mathcal N_-(\Lambda, \Gamma_n) \right ) \cap \mathcal {ND} (\mu),
 \]
 \end{definition}
 
 Finally, we have the following definition.
 
 \begin{definition}\label{FRDef}
 The non-removable boundary for $\Lambda$ is defined as
 \[
 \ \mathcal F(\Lambda) = \mathcal F_0(\Lambda)\cup \mathcal F_+(\Lambda) \cup \mathcal F_-(\Lambda)
 \]
and the removable set for $\Lambda$ is defined as
 \[
 \ \mathcal R(\Lambda) :=  \mathcal R_0(\Lambda) \cup \mathcal R_1(\Lambda).
 \]
\end{definition}

In the remaining section, we discuss a characterization of $\mathcal F(\Lambda)$ and $\mathcal R(\Lambda)$ which implies Definition \ref{FRDef} does not depend on the choices of $\{\Gamma_n\}$, up to a set of zero analytic capacity. First we need the following lemma that will also be used later. 
 
 \begin{lemma} \label{BBFunctLemma}
Suppose that $\{g_n\}\subset L^1(\mu)$ and $E$ is a compact subset with $\gamma(E) > 0$. Then there exists $\eta \in M_0^+(E)$ satisfying:

(1) $\eta$ is of $1$-linear growth, $\|\mathcal C_{\epsilon}(\eta)\|_{L^\infty (\mathbb C)}\le 1$ for all $\epsilon > 0,$ and 
$\gamma(E) \le C_{10} \|\eta\|$;

(2) $\mathcal C_*(g_n\mu )\in L^\infty(\eta)$;

(3) there exists a subsequence $f_k(z) = \mathcal C_{\epsilon_k}(\eta)(z)$ such that $f_k$ converges to $f\in L^\infty(\mu)$ in weak-star topology, and $f_k(\lambda) $ converges to $f(\lambda) = \mathcal C(\eta)(\lambda)$ uniformly on any compact subset of $E^c$ as $\epsilon_k\rightarrow 0$.
Moreover for $n \ge 1,$ 
\begin{eqnarray}\label{BBFunctLemmaEq1}
 \ \int f(z) g_n(z)d\mu (z) = - \int \mathcal C(g_n\mu) (z) d\eta (z),
 \end{eqnarray}
and for $\lambda\in E^c$,
 \begin{eqnarray}\label{BBFunctLemmaEq2}
 \ \int \dfrac{f(z) - f(\lambda)}{z - \lambda} g_n(z)d\mu (z) = - \int \mathcal C(g_n\mu) (z) \dfrac{d\eta (z)}{z - \lambda}.
 \end{eqnarray} 
\end{lemma}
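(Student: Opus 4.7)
The plan is to build $\eta$ in two stages and then use weak-star compactness together with Fubini to extract the two integral identities. For stage one, apply Lemma \ref{CTMaxFunctFinite} to the countable family $\{g_n\mu\}$ with tolerance $\epsilon_0 < \gamma(E)/(2 A_T)$, producing a Borel set $F$ with $\gamma(F^c) < \epsilon_0$ on which every $\mathcal C_*(g_n\mu)$ is bounded by some $M_n$. Semiadditivity (Theorem \ref{TolsaTheorem} (2)) gives $\gamma(E \cap F) \ge \gamma(E)/(2 A_T)$, and inner regularity of $\gamma$ furnishes a compact $E' \subset E \cap F$ of comparable capacity. For stage two, argue exactly as in the proof of Corollary \ref{ACIncreasing}: Theorem \ref{TolsaTheorem} (1) combined with parts (2) and (3) of Proposition \ref{GammaPlusThm} produces $\eta \in M_0^+(E')$ of $1$-linear growth with $N_2(\eta) \le 1$ and $\sup_\epsilon \|\mathcal C_\epsilon(\eta)\|_{L^\infty(\C)} \le C_3$, satisfying $\gamma(E) \lesssim \|\eta\|$. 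Rescaling by $C_3^{-1}$ delivers (1); and (2) holds because $\mathrm{spt}(\eta) \subset F$.

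For (3), the family $\{\mathcal C_\epsilon(\eta)\}$ is norm-bounded in $L^\infty(\mu)$, so by Banach--Alaoglu (and separability of $L^1(\mu)$) we pick $\epsilon_k \downarrow 0$ along which $f_k := \mathcal C_{\epsilon_k}(\eta) \to f$ weak-star. For any compact $J \subset E^c$ we have $\mathrm{dist}(J, E) > 0$, so once $\epsilon_k < \mathrm{dist}(J, E)$ the truncation is inactive and $f_k \equiv \mathcal C(\eta)$ on $J$; this delivers both the uniform-convergence claim and the identification $f|_{E^c} = \mathcal C(\eta)|_{E^c}$. Identity \eqref{BBFunctLemmaEq1} follows from Fubini applied to a double integral that is absolutely convergent over $\{|w - z| > \epsilon_k\}$ (the integrand is bounded by $|g_n(z)|/\epsilon_k$):
\[
 \ \int f_k g_n \, d\mu = -\int \mathcal C_{\epsilon_k}(g_n\mu) \, d\eta,
\]
the minus sign reflecting $z - w$ versus $w - z$. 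The left side tends to $\int f g_n\, d\mu$ by weak-star convergence; for the right side, Corollary \ref{ZeroAC} gives $\mathcal C_{\epsilon_k}(g_n\mu) \to \mathcal C(g_n\mu)$ off a set of zero $\gamma$, and since $N_2(\eta) \le 1$ forces $\eta(B) \lesssim \gamma(B)$ for every Borel $B$ via Proposition \ref{GammaPlusThm} (3) plus Theorem \ref{TolsaTheorem} (1), this convergence holds $\eta$-a.e. The dominant $\mathcal C_*(g_n\mu) \le M_n$ from (2) then closes the argument by dominated convergence.

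Identity \eqref{BBFunctLemmaEq2} follows by the same weak-star-plus-Fubini mechanism after inserting the partial fraction
\[
 \ \frac{1}{(w-z)(z-\lambda)} = \frac{1}{w-\lambda}\left(\frac{1}{w-z} + \frac{1}{z-\lambda}\right),
\]
so that on $\mathrm{spt}(\eta)$ one has $|w - \lambda| \ge \mathrm{dist}(\lambda, E) > 0$ and eventually $\epsilon_k < \mathrm{dist}(\lambda, E)$, giving $f_k(\lambda) = \mathcal C(\eta)(\lambda)$; the residual error coming from integrating over $\{|w-z| \le \epsilon_k\}$ is controlled by the $1$-linear growth of $\eta$ and vanishes as $\epsilon_k \to 0$. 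The main obstacle I anticipate is stage one---arranging $\eta$ to satisfy all the quantitative properties of (1) simultaneously (bounded truncated Cauchy transforms, $1$-linear growth, and mass comparable to $\gamma(E)$) while supported on a set where the full countable family $\mathcal C_*(g_n\mu)$ is finite. Once $\eta$ is in hand, the weak-star compactness, Fubini, and dominated-convergence steps are routine.
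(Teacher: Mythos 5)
Your proposal follows essentially the same route as the paper's proof: the same two-stage construction of $\eta$ (first carving out a positive-capacity subset of $E$ on which every $\mathcal C_*(g_n\mu)$ is bounded via Lemma \ref{CTMaxFunctFinite}, then invoking Tolsa's comparability results from Proposition \ref{GammaPlusThm} to produce a measure with $1$-linear growth, bounded truncated Cauchy transforms, mass comparable to $\gamma(E)$, and $\eta\ll\gamma$), followed by the same weak-star extraction, Fubini at the truncated level, and dominated convergence using $\mathcal C_*(g_n\mu)\le M_n$ $\eta$-a.e. together with Corollary \ref{ZeroAC}. For \eqref{BBFunctLemmaEq2} the paper is slightly more careful than your sketch: it introduces the bounded functions $e_k(z)=\frac{\mathcal C_{\epsilon_k}(\eta)(z)-f(\lambda)}{z-\lambda}$, extracts a further weak-star limit $e$, and identifies $e=\frac{f-f(\lambda)}{z-\lambda}$ via $(z-\lambda)e+f(\lambda)=f$; this step is needed to legitimize the passage to the limit when $\lambda$ lies in $\mathrm{spt}(\mu)$, since $\frac{g_n}{z-\lambda}$ need not be in $L^1(\mu)$, and you should make that extraction explicit rather than appealing to ``the same mechanism.''
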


\begin{proof}
From Lemma \ref{CTMaxFunctFinite}, we find $E_1\subset E$ such that $\gamma(E\setminus E_1) < \frac{\gamma(E)}{2A_T}$ and $\mathcal C_*(g_n\mu )(z) \le M_n < \infty$ for $z\in E_1$. Using Theorem \ref{TolsaTheorem} (2), we get
 $\gamma(E_1) \ge \frac{1}{A_T}\gamma(E) - \gamma(E\setminus E_1) \ge \frac{1}{2A_T}\gamma(E).$
Using Theorem \ref{TolsaTheorem} (1) and Proposition \ref{GammaPlusThm} (1),
there exists $\eta\in M_0^+(E_1)$ satisfying (1). So (2) holds. Moreover, using Proposition \ref{GammaPlusThm} again, we may assume $\eta = w\eta_1,$ where $\eta_1$ is of $1$-linear growth, $0\le w \le 1,$ and $N_2(\eta_1) \le 1.$ Hence, from Theorem \ref{TolsaTheorem} (1) and Proposition \ref{GammaPlusThm} (3), we conclude a $\gamma$ zero set is also a zero set of $\eta.$
Clearly,
 \begin{eqnarray}\label{lemmaBasicEq3}
 \  \int \mathcal C_\epsilon(\eta)(z) g_nd\mu  = - \int \mathcal C_\epsilon(g_n\mu)(z) d\eta
 \end{eqnarray}
for $n \ge 1$. We can choose a sequence $f_k(\lambda) = \mathcal C_{\epsilon_k}(w\eta)(\lambda)$ that converges to $f$ in $L^\infty(\mu)$ weak-star topology and $f_k(\lambda)$ uniformly tends to $f(\lambda)$ on any compact subset of $E^c$. On the other hand, $|\mathcal C_{\epsilon_k}(g_n\mu)(z) | \le M _n,~ \eta |_E-a.a.$ and by Corollary \ref{ZeroAC}, $\lim_{k\rightarrow \infty} \mathcal C_{\epsilon_k}(g_n\mu)(z)  = \mathcal C(g_n\mu)(z) ,~ \eta -a.a.$. We apply the Lebesgue dominated convergence theorem to the right hand side of \eqref{lemmaBasicEq3} and get \eqref{BBFunctLemmaEq1} for $n \ge 1$. For \eqref{BBFunctLemmaEq2}, let $\lambda\notin E$ and $d = \text{dist}(\lambda, E)$,
for $z\in \D(\lambda, \frac{d}{2})$ and $\epsilon < \frac{d}{2}$, we have
 \[
 \ \left |\dfrac{\mathcal C_\epsilon (\eta)(z) - f(\lambda)}{z - \lambda} \right |\le \left |\mathcal C_\epsilon  \left (\dfrac{\eta (s)}{s - \lambda} \right ) (z) \right | \le 
\dfrac{2}{d^2}\|\eta\|. 
 \]
For $z\notin \D(\lambda, \frac{d}{2})$ and $\epsilon < \frac{d}{2}$,
 \[
 \ \left |\dfrac{\mathcal C_\epsilon (\eta)(z) - f(\lambda)}{z - \lambda} \right | \le \dfrac{4}{d}.
 \]
Thus, we can replace the above proof for the measure $\frac{\eta (s)}{s - \lambda}$. In fact, we can choose a subsequence  $\{\mathcal C_{\epsilon_{k_j}} (\eta)\}$ such that $e_{k_j}(z) = \frac{\mathcal C_{\epsilon_{k_j}} (\eta)(z) - f(\lambda)}{z - \lambda}$ converges to $e(z)$ in weak-star topology. Clearly, $(z-\lambda)e_{k_j}(z)  + f(\lambda) = \mathcal C_{\epsilon_{k_j}} (\eta)(z)$ converges to $(z-\lambda)e(z)  + f(\lambda) = f(z)$ in weak-star topology.  
On the other hand, \ref{lemmaBasicEq3} becomes
 \begin{eqnarray}\label{lemmaBasicEq31}
 \  \int \mathcal C_{\epsilon_{k_j}}(\dfrac{\eta(s)}{s-\lambda})(z) g_nd\mu  = - \int \mathcal C_{\epsilon_{k_j}}(g_n\mu)(z) \dfrac{d\eta(z)}{z-\lambda}
 \end{eqnarray}
and for $\epsilon_{k_j} < \frac{d}{2}$, we have
 \begin{eqnarray}\label{lemmaBasicEq32}
 \left | \mathcal C_{\epsilon_{k_j}}(\dfrac{\eta(s)}{s-\lambda})(z) - e_{k_j}(z) \right |
\ \le \begin{cases}0, & z\in \D(\lambda, \frac{d}{2}), \\ \dfrac{2}{d^2} \eta(\D(z, \epsilon_{k_j})), & z\notin \D(\lambda, \frac{d}{2}), \end{cases}
\end{eqnarray}
which goes to zero as $\epsilon_{k_j} \rightarrow 0$. Combining \eqref{lemmaBasicEq31}, \eqref{lemmaBasicEq32}, and Lebesgue dominated convergence theorem, we prove the equation \eqref{BBFunctLemmaEq2}. (3) is proved.    
\end{proof}

For a Lipschitz graph $\Gamma_0$ with rotation angle $\beta_0,$ define
 \[
 \ \mathcal N_0(\Lambda, \Gamma_0) = \bigcup_{j = 1}^\infty \{z\in \Gamma_0: ~ \CT (g_j\mu)(z) \ne 0 \}.
 \]
 
 \begin{lemma} \label{NPMLemma}
 Let $\Gamma_n$ and $h$ be as in Lemma \ref{GammaExist}. Then, for $n \ge 1,$
 \[
 \ \mathcal N_+(\Lambda, \Gamma_n) \cup \mathcal N_-(\Lambda, \Gamma_n) \subset \mathcal N_0(\Lambda, \Gamma_n),~\mathcal H^1|_{\Gamma_n}-a.a..
 \]	
 \end{lemma}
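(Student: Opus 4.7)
The plan is first to reduce the claimed containment, up to an $\mathcal H^1|_{\Gamma_n}$-null set, to a pointwise statement via Plemelj's jump identity. By Lemma~\ref{GammaExist}(1), the Radon--Nikodym derivative of $g_j\mu$ with respect to $\mathcal H^1|_{\Gamma_n}$ equals $g_jh$ almost everywhere on $\Gamma_n$, so Theorem~\ref{GPTheorem1} yields
\[
 v^{\pm}(g_j\mu,\Gamma_n,\beta_n)(z)=\mathcal C(g_j\mu)(z)\pm\frac{e^{-i\beta_n}g_j(z)h(z)}{2L(z)}\qquad \mathcal H^1|_{\Gamma_n}\text{-a.e.}
\]
If $z\in(\mathcal N_+(\Lambda,\Gamma_n)\cup\mathcal N_-(\Lambda,\Gamma_n))\setminus\mathcal N_0(\Lambda,\Gamma_n)$, then every $\mathcal C(g_k\mu)(z)$ vanishes while some $v^{\pm}_j(z)\ne 0$; the jump identity forces $g_j(z)h(z)\ne 0$ for that $j$. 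Hence it suffices to prove that, for every $j$, the set
\[
 E_j:=\{z\in\Gamma_n : \mathcal C(g_j\mu)(z)=0,\ g_j(z)h(z)\ne 0\}
\]
has $\mathcal H^1|_{\Gamma_n}$-measure zero.

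I would argue by contradiction. Assume $\mathcal H^1|_{\Gamma_n}(E_{j_0})>0$ for some $j_0$; by \eqref{HACEq} also $\gamma(E_{j_0})>0$. At $\mathcal H^1|_{\Gamma_n}$-a.e.\ density point $z_0\in E_{j_0}$, Theorem~\ref{GPTheorem1}(b)(c) produces nonzero $\gamma$-limits $v^{\pm}_{j_0}(z_0)=\pm\frac{e^{-i\beta_n}g_{j_0}(z_0)h(z_0)}{2L(z_0)}$ for $\mathcal C(g_{j_0}\mu)$ from the cones $U_{\Gamma_n}$ and $L_{\Gamma_n}$. Because $g_{j_0}\in R^t(K,\mu)^\perp$ makes $\mathcal C(g_{j_0}\mu)\equiv 0$ on $\C\setminus K$, applying the defining $\gamma$-limit condition to the level set $\{|\mathcal C(g_{j_0}\mu)-v^{\pm}_{j_0}(z_0)|>|v^{\pm}_{j_0}(z_0)|/2\}$ produces the local density estimate
\[
 \gamma\bigl((U_{\Gamma_n}\cup L_{\Gamma_n})\cap\D(z_0,\delta)\setminus K\bigr)=o(\delta)\quad\text{as }\delta\to 0.
\]

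The central difficulty is converting this density information into a genuine contradiction. My plan is to apply Lemma~\ref{BBFunctLemma} with $E=E_{j_0}$ to obtain $\eta\in M_0^+(E_{j_0})$ of $1$-linear growth with bounded truncated Cauchy transforms and $\|\eta\|\asymp\gamma(E_{j_0})>0$, together with a weak-star $L^\infty(\mu)$ limit $f$ of a sequence $\mathcal C_{\epsilon_k}(\eta)$ satisfying $\int fg_k\,d\mu=-\int\mathcal C(g_k\mu)\,d\eta$ for every $k$. The main obstacle is to promote $f$ to a member of $R^t(K,\mu)$; here the density bound derived above becomes decisive, since it permits a Vitushkin-type localization to approximate $\mathcal C_{\epsilon_k}(\eta)$ by elements of $\mathrm{Rat}(K)$ off a small set. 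Once $f\in R^t(K,\mu)$, orthogonality forces $\int\mathcal C(g_k\mu)\,d\eta=0$ for every $k$; combined with the $L^s(\mu)$-density of $\Lambda$ in $R^t(K,\mu)^\perp$ and the $\gamma$-measure convergence $\mathcal C_*\bigl((g-g_k)\mu\bigr)\to 0$ supplied by Theorem~\ref{TolsaTheorem}(3), this extends to $\mathcal C(\eta)\in R^t(K,\mu)$. However $\mathcal C(\eta)$ has a non-trivial Plemelj jump across $\Gamma_n$ at $\mathcal H^1|_{\Gamma_n}$-a.e.\ point of $E_{j_0}$, since $\eta$ has positive linear density there; elements of $R^t(K,\mu)$ exhibit no such $\mu$-essential jump across a Lipschitz graph carrying a Radon--Nikodym piece of $\mu$, which is the contradiction that closes the argument.
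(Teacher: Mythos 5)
Your reduction to the sets $E_j = \{z\in\Gamma_n: \mathcal C(g_j\mu)(z)=0,\ g_j(z)h(z)\ne 0\}$ is a valid set inclusion, but it throws away the one piece of information the argument actually needs: on $(\mathcal N_+\cup\mathcal N_-)\setminus\mathcal N_0$ \emph{every} $\mathcal C(g_k\mu)$ vanishes simultaneously. The paper picks the compact $E$ inside that full simultaneous zero set, so that the auxiliary $\eta$ from Lemma~\ref{BBFunctLemma} is supported where $\mathcal C(g_k\mu)=0$ for all $k$; then $\int \mathcal C(g_k\mu)\,d\eta=0$ for every $k$ is automatic, and \eqref{BBFunctLemmaEq1} gives $f\perp g_k$ for all $k$, hence $f\in R^{t,\infty}(K,\mu)$ with no further work. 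Your $E_{j_0}$ only controls $\mathcal C(g_{j_0}\mu)$, so you only get $f\perp g_{j_0}$, and the membership $f\in R^t(K,\mu)$ that you need is not supplied. The proposed Vitushkin substitute does not repair this: the density bound $\gamma((U_{\Gamma_n}\cup L_{\Gamma_n})\cap\D(z_0,\delta)\setminus K)=o(\delta)$ says the complement of $K$ is \emph{thin} near $E_{j_0}$ inside the cones, which is exactly the geometry that makes it hard, not easy, to move the singularities of $\mathcal C(\eta)$ --- all of which lie on $E_{j_0}\subset K$ --- to poles off $K$; and in any case no mechanism is given by which that bound yields $\int \mathcal C(g_k\mu)\,d\eta=0$ for the whole family $\Lambda$.

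The closing contradiction also rests on an unproved and in fact false claim. You assert that elements of $R^t(K,\mu)$ exhibit no $\mu$-essential jump across a Lipschitz graph carrying a Radon--Nikodym piece of $\mu$. Nothing in the paper says this, and it fails in general: the idempotents $\chi_{\Delta_i}=\tilde f_i\in R^{t,\infty}(K,\mu)$ produced in the proof of Theorem~A jump across $\partial U_i$, portions of which can be Lipschitz and carry $\mu$-mass. Indeed the whole $\mathcal F_+/\mathcal F_-$ machinery exists precisely because one-sided boundary values of Cauchy transforms of \emph{annihilating} measures can vanish on one side of a graph and not the other; that says nothing about one-sided behavior of $R^t$-functions themselves. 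Compare the paper's actual route: with $E$ in the simultaneous zero set, one gets $f\in R^{t,\infty}(K,\mu)$ directly from Lemma~\ref{BBFunctLemma}, approximates $f$ by $r_m\in\text{Rat}(K)$ in $L^t(\mu)$, uses Tolsa's weak $(1,1)$ bound (Theorem~\ref{TolsaTheorem}(3)) to conclude $\mathcal C(fg_j\mu)=0$ $\eta$-a.e., and equates the Plemelj jumps of $\mathcal C\eta\cdot\mathcal C(g_j\mu)$ and $\mathcal C(fg_j\mu)$ to force $g_jh=0$ $\eta$-a.e., the contradiction. I would abandon the single-$j$ reduction and keep $E$ inside the full complement of $\mathcal N_0$.
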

 
 \begin{proof}
 Without loss of generality, we assume $n=1$ and $\beta_1=0.$
 Suppose there exists a compact subset $E \subset \mathcal N_+(\Lambda, \Gamma_1) \cup \mathcal N_-(\Lambda, \Gamma_1) \setminus \mathcal N_0(\Lambda, \Gamma_1)$ such that $\mathcal H^1(E) > 0.$ Then $\CT (g_j\mu)(z) = 0,~\mathcal H^1|_E-a.a.$ and 
 \begin{eqnarray}\label{NPMLemmaEq1}
 \ v^+(g_j\mu,\Gamma_1)(z) = \dfrac{(g_jh)(z)}{2L(z)},~ v^-(g_j\mu,\Gamma_1)(z) = - \dfrac{(g_jh)(z)}{2L(z)},~\mathcal H^1|_E-a.a.
 \end{eqnarray}
 for all $j\ge 1.$ Let $\eta	$ and $f$ be from Lemma \ref{BBFunctLemma} for $\Lambda$ and $E$ as $\gamma(E) > 0$ by \eqref{HACEq}. We may assume $\eta = w\mathcal H^1|_E,$ where $0\le w(z) \le 1$ on $E,$ since $\eta$ is of $1$-linear growth. From Lemma \ref{BBFunctLemma} (3), we see $f\in R^{t,\i}(K,\mu)$ as $\Lambda$ is dense in $R^t(K,\mu)^\perp$ and
 \begin{eqnarray}\label{NPMLemmaEq2}
 \ \CT\eta (\lambda)\CT (g_j\mu)(\lambda) = \CT (fg_j\mu)(\lambda), ~\gamma|_{E^c}-a.a.,~\text{ for } j\ge 1. 
 \end{eqnarray}
 Let $\{r_n\}\subset \text{Rat}(K)$ such that $\|r_n - f\|_{L^t(\mu)}\rightarrow 0.$ Clearly, 
 \[
 \ \mathcal C(r_ng_j\mu)(z) = r_n (z)\mathcal C(g_j\mu)(z) = 0,~\eta-a.a.
 \]
 since a $\gamma|_E$ zero set is also a zero set of $\eta$ by \eqref{HACEq}. By Theorem \ref{TolsaTheorem} (3), we get
 \[
 \begin{aligned}
 \ \eta\{|\mathcal C(fg_j\mu)| > \epsilon\} = &\eta\{|\mathcal C((r_n-f)g_j\mu)| > \epsilon\} \\
 \ \le & \eta\{\mathcal C_*((r_n-f)g_j\mu) > \epsilon\} \\
 \ \le & \dfrac{C_{11}\|r_n-f)\|\|g_j\|}{\epsilon} \rightarrow 0 \text{ as } n \rightarrow 0.
 \end{aligned}
 \]
 Therefore, $\mathcal C(fg_j\mu)(z) = 0,~\eta-a.a.$ and we have
 \begin{eqnarray}\label{NPMLemmaEq3}
 \ v^+(fg_j\mu,\Gamma_1)(z) = \dfrac{(fg_jh)(z)}{2L(z)},~ v^-(fg_j\mu,\Gamma_1)(z) = - \dfrac{(fg_jh)(z)}{2L(z)},~\mathcal \eta-a.a.
 \end{eqnarray}
 for $j \ge 1.$ Apply Theorem \ref{GPTheorem1} and Lemma \ref{PFLipschitz} to $\CT\eta (\lambda)$ for \eqref{NPMLemmaEq2}, we get
 \[
 \begin{aligned}
 \ v^+(w\mathcal H^1,\Gamma_1)(z)v^+(g_j\mu,\Gamma_1)(z) = & v^+(fg_j\mu,\Gamma_1)(z),~ \eta-a.a., \\
 \ v^-(w\mathcal H^1,\Gamma_1)(z)v^-(g_j\mu,\Gamma_1)(z) = & v^-(fg_j\mu,\Gamma_1)(z),~ \eta-a.a.
 \end{aligned}
\]
Combining with \eqref{NPMLemmaEq1} and \eqref{NPMLemmaEq3}, we have
\[
\ g_j(z)h(z) = 0, ~\eta-a.a. \text{ for } j \ge 1,
\]
which implies
\[
\ v^+(g_j\mu,\Gamma_1)(z) = v^-(g_j\mu,\Gamma_1)(z) = 0, ~\eta-a.a. \text{ for } j \ge 1.
\]
This is a contradiction. The lemma is proved.
 \end{proof}

Define
 \[
\ \mathcal E_N(\Lambda) = \left \{\lambda : ~\lim_{\epsilon \rightarrow 0} \mathcal C_\epsilon(g_j\mu)(\lambda )\text{ exists, } \max_{1\le j\le N} |\mathcal C (g_j\mu)(\lambda ) | \le \frac{1}{N} \right \}.
 \]

\begin{theorem}\label{FCharacterization}
There is a subset $\mathcal Q \subset \mathbb C$ with $\gamma(\mathcal Q) = 0$ such that if $\lambda \in\mathbb C \setminus \mathcal Q$, then $\lambda \in \mathcal F(\Lambda)$ if and only if   
 \begin{eqnarray}\label{FCEq4}
 \ \underset{\delta\rightarrow 0}{\overline{\lim}}\dfrac{\gamma(\D(\lambda, \delta)\cap\mathcal E_N(\Lambda))}{\delta} > 0 
 \end{eqnarray}
for all $N \ge 1$. Consequently, $\mathcal F(\Lambda)$ and $\mathcal R(\Lambda)$ do not depend on the choices of $\{\Gamma_n\}$ (as in Lemma \ref{GammaExist}), up to a set of zero analytic capacity. 
\end{theorem}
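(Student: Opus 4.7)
The plan is to pool all the routine $\gamma$-null exceptional sets from the preparatory lemmas into one set $\mathcal Q$ with $\gamma(\mathcal Q)=0$, and then prove both implications by cases on where $\lambda$ sits in the decomposition $\mathcal F(\Lambda)=\mathcal F_0\cup\mathcal F_+\cup\mathcal F_-$. I would put into $\mathcal Q$ the points where some $\lim_{\epsilon\to 0}\mathcal C_\epsilon(g_j\mu)$ fails to exist (Corollary \ref{ZeroAC}); the subset of $\mathcal{ZD}(\mu)$ where $\gamma$-continuity fails for some $\mathcal C(g_j\mu)$ (Lemma \ref{CauchyTLemma}); the points of $\mathcal{ND}(\mu)$ not lying on $\Gamma=\cup_n\Gamma_n$ from Lemma \ref{GammaExist}; the exceptional set of Theorem \ref{GPTheorem1} for each $g_j\mu$ and each $\Gamma_n$; and the $\mathcal H^1|_{\Gamma_n}$-null set from Lemma \ref{NPMLemma} where $\mathcal N_+\cup\mathcal N_-$ fails to be contained in $\mathcal N_0$. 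Countable semiadditivity (Theorem \ref{TolsaTheorem}(2)) combined with \eqref{HACEq} keeps $\mathcal Q$ of analytic capacity zero.

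For the forward direction, suppose $\lambda\in\mathcal F(\Lambda)\setminus\mathcal Q$. If $\lambda\in\mathcal F_0\subset\mathcal{ZD}(\mu)$, Lemma \ref{CauchyTLemma} applied to each $g_j\mu$, together with $\gamma(\D(\lambda,\delta))=\delta$ and countable semiadditivity, yields
\[
 \ \liminf_{\delta\to 0}\frac{\gamma(\D(\lambda,\delta)\cap\mathcal E_N(\Lambda))}{\delta}\ge\frac{1}{A_T}>0,
\]
which is \eqref{FCEq4}. If $\lambda\in\mathcal F_+$, so $\lambda\in\Gamma_n\cap\mathcal{ND}(\mu)$ with $v^+(g_j\mu,\Gamma_n,\beta_n)(\lambda)=0$ for every $j$, the analogous argument based on Theorem \ref{GPTheorem1}(b) restricted to $U_{\Gamma_n}\cap\D(\lambda,\delta)$ again gives \eqref{FCEq4}, using that $U_{\Gamma_n}\cap\D(\lambda,\delta)$ contains (after rotation by $\beta_n$) the cone $UC(\lambda,\alpha,\delta)$ of positive lower $\gamma$-density at $\lambda$. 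The case $\lambda\in\mathcal F_-$ is symmetric via Theorem \ref{GPTheorem1}(c).

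For the reverse direction I argue by contrapositive: suppose $\lambda\notin\mathcal F(\Lambda)\cup\mathcal Q$. If $\lambda\in\mathcal{ZD}(\mu)$, then $\lambda\notin\mathcal F_0$ supplies $j_0$ with $c:=|\mathcal C(g_{j_0}\mu)(\lambda)|>0$, and Lemma \ref{CauchyTLemma} forces $\{|\mathcal C(g_{j_0}\mu)|\le c/2\}$ to have $\gamma$-density $0$ at $\lambda$, so $\mathcal E_N(\Lambda)$ does too for any $N>2/c$, contradicting \eqref{FCEq4}. If $\lambda\in\mathcal{ND}(\mu)$, Lemma \ref{GammaExist}(2) places $\lambda\in\Gamma_n$ for some $n$; from $\lambda\notin\mathcal F_+\cup\mathcal F_-$ I extract $j_0$ with $v^+(g_{j_0}\mu,\Gamma_n,\beta_n)(\lambda)\ne 0$ and $j_1$ with $v^-(g_{j_1}\mu,\Gamma_n,\beta_n)(\lambda)\ne 0$, and, crucially, Lemma \ref{NPMLemma} (whose $\mathcal H^1|_{\Gamma_n}$-null exceptional set is absorbed into $\mathcal Q$) supplies $j_2$ with $\mathcal C(g_{j_2}\mu)(\lambda)\ne 0$. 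Theorem \ref{GPTheorem1}(b), (c), (d) then force $\mathcal E_N(\Lambda)$ to have $\gamma$-density $0$ on each of $U_{\Gamma_n}\cap\D(\lambda,\delta)$, $L_{\Gamma_n}\cap\D(\lambda,\delta)$, and $\Gamma_n\cap\D(\lambda,\delta)$ for $N$ large enough, and a last application of countable semiadditivity to the tripartition of the disk gives $\gamma(\mathcal E_N(\Lambda)\cap\D(\lambda,\delta))/\delta\to 0$, contradicting \eqref{FCEq4}.

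The \emph{Consequently} statement is then immediate: since \eqref{FCEq4} does not reference the Lipschitz graphs, any two decompositions allowed by Lemma \ref{GammaExist} produce the same $\mathcal F(\Lambda)$ and $\mathcal R(\Lambda)$ up to a $\gamma$-null set. I expect the main subtlety to be the $\mathcal{ND}(\mu)$ case of the reverse direction, specifically the extraction of $j_2$; without Lemma \ref{NPMLemma} a pathological configuration (one-sided limits nonzero, every principal value vanishing) on a $\gamma$-positive subset of $\Gamma_n$ would spoil the argument on $\Gamma_n\cap\D(\lambda,\delta)$ even though the pieces on $U_{\Gamma_n}$ and $L_{\Gamma_n}$ are fully under control.
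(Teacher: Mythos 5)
Your proposal is correct and takes essentially the same route as the paper: both arguments split $\mathbb C$ (off a pooled $\gamma$-null exceptional set) into the zero-density and nonzero-density parts of $\mu$, treat $\mathcal F_0$ via Lemma \ref{CauchyTLemma} and $\mathcal F_\pm$ via Theorem \ref{GPTheorem1}, handle the removable side by extracting an index with a nonzero value/one-sided limit, and use semiadditivity plus the positive lower $\gamma$-density of $\D(\lambda,\delta)\cap U_{\Gamma_n}$ (equivalently the cone) to force positive upper density of $\mathcal E_N$. Your identification of Lemma \ref{NPMLemma} as the crucial ingredient for producing the index $j_2$ with $\mathcal C(g_{j_2}\mu)(\lambda)\ne 0$ in the $\mathcal{ND}(\mu)$ reverse direction matches exactly the role it plays in the paper's proof; the only cosmetic differences are that you pool the exceptional sets up front and state the reverse direction contrapositively, whereas the paper treats $\mathcal R_0$ and $\mathcal R_1$ directly, but these are the same argument.
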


\begin{proof}
We first prove that there exists $\mathcal Q_1$ with $\gamma(\mathcal Q_1) = 0$ such that if $\lambda_0\in \mathcal {ZD}(\mu) \setminus \mathcal Q_1$, then $\lambda_0\in \mathcal F(\Lambda)$ if and only if $\lambda_0$ satisfies \eqref{FCEq4}.

From Definition \ref{R0ForRILambda}, we get $\mathcal {ZD}(\mu) \approx \mathcal R_0 (\Lambda) \cup \mathcal F_0 (\Lambda), ~\gamma-a.a.$. There exists $\mathcal Q_1$ with $\gamma(\mathcal Q_1) = 0$ such that for $\lambda_0\in \mathcal {ZD}(\mu) \setminus \mathcal Q_1$, $\Theta_{ g_j\mu}(\lambda _0) = 0$,
$\mathcal C(g_j\mu) (\lambda_0) = \lim_{\epsilon\rightarrow 0} \mathcal C_\epsilon (g_j\mu) (\lambda_0)$ exists, and $\mathcal C(g_j\mu) (z)$ is $\gamma$-continuous at $\lambda_0$ (Lemma \ref{CauchyTLemma})  for all $j\ge 1$.

If $\lambda_0\in \mathcal R_0(\Lambda)$, then there exists $j_0$ such that $\mathcal C(g_{j_0}\mu) (\lambda_0) \ne 0$. Let $\epsilon_0 = \frac 12 |\mathcal C(g_{j_0}\mu) (\lambda_0)|$, we obtain that, for $N > N_0 := \max(j_0,\frac{1}{\epsilon_0} + 1)$,
 \[
 \ \mathcal E_N(\Lambda) \subset \{|\mathcal C(g_{j_0}\mu) (z) - \mathcal C(g_{j_0}\mu) (\lambda_0)| > \epsilon_0 \}.
 \]
Therefore, by Lemma \ref{CauchyTLemma}, for $N > N_0$,  
 \[
 \ \lim_{\delta\rightarrow 0} \dfrac{\gamma (\D(\lambda_0 , \delta ) \cap \mathcal E_N(\Lambda))}{\delta} = 0.
 \]
Thus, $\lambda_0$ does not satisfy \eqref{FCEq4}. 

Now for $\lambda_0\in \mathcal F_0(\Lambda)$, $\mathcal C(g_j\mu) (\lambda_0) = 0$ for all $j \ge 1$.
Using Lemma \ref{CauchyTLemma} and  Theorem \ref{TolsaTheorem} (2), we get
 \[
 \ \begin{aligned}
 \ &\lim_{\delta\rightarrow 0} \dfrac{\gamma (\D(\lambda _0, \delta) \setminus \mathcal E_N(\Lambda))}{\delta } \\
 \ \le & A_T \sum_{j=1}^N \lim_{\delta\rightarrow 0} \dfrac{\gamma (\D(\lambda _0, \delta) \cap \{|\mathcal C(g_j\mu)(z) - \mathcal C(g_j\mu) (\lambda_0)| \ge \frac{1}{N}\})}{\delta } \\
 \ = & 0. 
 \ \end{aligned}
 \]
Hence, $\lambda_0$ satisfies \eqref{FCEq4}.

We now prove that there exists $\mathcal Q_2$ with $\gamma(\mathcal Q_2) = 0$ such that if $\lambda_0\in \mathcal {ND}(\mu) \setminus \mathcal Q_2$, then $\lambda_0\in \mathcal F(\Lambda)$ if and only if $\lambda_0$ satisfies \eqref{FCEq4}.

From Definition \ref{FPMR1Def}, we get $\mathcal {ND}(\mu) \approx \mathcal R_1 (\Lambda) \cup (\mathcal F_+(\Lambda) \cup \mathcal F_-(\Lambda)), ~\gamma-a.a.$. There exists $\mathcal Q_2$ with $\gamma(\mathcal Q_2) = 0$ such that for $\lambda_0\in \mathcal {ND}(\mu) \cap \Gamma_n\setminus \mathcal Q_2$, 
$v^0(g_j\mu, \Gamma_n)(\lambda_0) = \mathcal C(g_j\mu) (\lambda_0) = \lim_{\epsilon\rightarrow 0} \mathcal C_\epsilon (g_j\mu) (\lambda_0)$, $v^+(g_j\mu, \Gamma_n, \beta_n)(\lambda_0)$, and $v^-(g_j\mu, \Gamma_n, \beta_n)(\lambda_0)$ exist for all $j,n \ge 1$ and Theorem \ref{GPTheorem1} (b), (c), and (d) hold. Fix $n = 1$ and without loss of generality, we assume $\beta_1 =0.$

If $\lambda_0\in \mathcal R_1(\Lambda)$, then there exist integers $j_0$, $j_1$, and $j_2$ such that $v^0(g_{j_0}\mu, \Gamma_1)(\lambda_0) \ne 0$ by Lemma \ref{NPMLemma} (in fact, we can let $j_0 = j_1$ or $j_0 = j_2$), $v^+(g_{j_1},\Gamma_1)(\lambda_0) \ne 0$, and $v^-(g_{j_2}\mu, \Gamma_1)(\lambda_0) \ne 0$. Set
 \[
 \ \epsilon_0 = \dfrac 12 \min (|v^0(g_{j_0}\mu, \Gamma_1)(\lambda_0)|, |v^+(g_{j_1},\Gamma_1)(\lambda_0)|, |v^-(g_{j_2}\mu, \Gamma_1)(\lambda_0)|), 
 \]
then for $N > N_0 := \max (j_0, j_1, j_2, \frac{1}{\epsilon_0} + 1)$,
\[
\ \Gamma_1\cap \mathcal E_N (\Lambda ) \subset D := \Gamma_1 \cap \{|\mathcal{C}(g_{j_0}\mu) (z ) - v^0(g_{j_0}\mu, \Gamma_1)(\lambda_0)| \ge \epsilon_0 \},
 \] 
\[
\ U_{\Gamma_1}\cap \mathcal E_N (\Lambda ) \subset
 E := U_{\Gamma_1}\cap \{|\mathcal{C}(g_{j_1}\mu)(z ) - v^+(g_{j_1}\mu, \Gamma_1)(\lambda_0)| \ge \epsilon_0 \},
 \]
and
 \[
\ L_{\Gamma_1}\cap \mathcal E_N (\Lambda ) \subset
 F :=  L_{\Gamma_1}\cap \{|\mathcal{C}(g_{j_2}\mu)(z ) - v^-(g_{j_2}\mu, \Gamma_1)(\lambda_0)| \ge \epsilon_0 \}.
 \]
Therefore, using Theorem \ref{TolsaTheorem} (2) and Theorem \ref{GPTheorem1}, we get for $N > N_0$,
 \[
 \begin{aligned}
 \ &\lim_{\delta\rightarrow 0}\dfrac{\gamma(\D(\lambda_0, \delta) \cap \mathcal E_N (\Lambda ))}{\delta} \\
 \ \le & A_T\left ( \lim_{\delta\rightarrow 0}\dfrac{\gamma(\D(\lambda_0, \delta) \cap D)}{\delta} + \lim_{\delta\rightarrow 0}\dfrac{\gamma(\D(\lambda_0, \delta) \cap E)}{\delta} + \lim_{\delta\rightarrow 0}\dfrac{\gamma(\D(\lambda_0, \delta) \cap F)}{\delta}\right ) \\
 \ = &0.
 \end{aligned}
 \]
Hence, $\lambda_0$ does not satisfy \eqref{FCEq4}.   
 
For $\lambda_0\in (\mathcal F_+ (\Lambda ) \cup \mathcal F_-(\Lambda )) \cap \Gamma_1$, we may assume that $\lambda_0\in \mathcal Z_+(\Lambda , \Gamma_1)\subset \Gamma_1$. Using Theorem \ref{TolsaTheorem} (2) and Theorem \ref{GPTheorem1}, we get ($v^+(g_j\mu, \Gamma_1)(\lambda_0) = 0$)
 \[
 \ \begin{aligned}
 \ &\lim_{\delta\rightarrow 0}\dfrac{\gamma(\D(\lambda_0, \delta) \cap U_{\Gamma_1}\setminus \mathcal E_N (\Lambda ))}{\delta} \\
 \ \le & A_T \sum_{j=1}^N\lim_{\delta\rightarrow 0}\dfrac{\gamma(\D(\lambda_0, \delta) \cap U_{\Gamma_1}\cap \{|\mathcal{C}(g_j\mu)(z ) - v^+(g_j\mu, \Gamma_1)( \lambda_0)| \ge \frac{1}{N} \})}{\delta} \\
 \ \ = & 0.
 \ \end{aligned}
 \]
This implies
 \[
 \ \underset{\delta\rightarrow 0}{\overline \lim}\dfrac{\gamma(\D(\lambda_0, \delta) \cap \mathcal E_N(\Lambda )}{\delta} \ge \underset{\delta\rightarrow 0}{\overline \lim}\dfrac{\gamma(\D(\lambda_0, \delta) \cap U_{\Gamma_1}\cap \mathcal E_N(\Lambda ))}{\delta} > 0.
\] 
Hence, $\lambda_0$ satisfies \eqref{FCEq4}.

Finally, \eqref{FCEq4} does not depend on choices of $\{\Gamma_n\}$, therefore, $\mathcal F(\Lambda )$ and $\mathcal R(\Lambda )$ are independent of choices of $\{\Gamma_n\}$ up to a set of zero analytic capacity.  
\end{proof}

\section{\textbf{Building block functions and uniqueness of $\mathcal F(\Lambda)$ and $\mathcal R(\Lambda)$}}

The following is our main lemma, which extends \cite[Lemma 5.1]{cy22} to $\Lambda.$

\begin{lemma} \label{BBFRLambda} 
Let $E_1\subset \mathcal F(\Lambda)$ be a compact subset with $\gamma(E_1) > 0$. Then 
there exists $f\in R^{t,\i}(K, \mu),$ $\eta\in M_0^+(E_1),$ and $\|\mathcal C_\epsilon (\eta) \| \le C_{12}$ such that $f(z) = \mathcal C(\eta)(z)$ for $z\in \C_\i \setminus E_1$, 
 \[
 \ \|f\|_{L^\infty(\mu)} \le C_{12},~ f(\infty) = 0,~ f'(\infty) = - \gamma(E_1),
 \]
and
 \begin{eqnarray}\label{BBFRLambdaEq1}
 \ \mathcal C(\eta)(z) \mathcal C(g_j\mu) (z) = \mathcal C(fg_j\mu) (z), ~\gamma|_{E_1^c}-a.a. \text{ for }j \ge 1.
 \end{eqnarray} 
\end{lemma}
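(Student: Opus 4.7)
The plan is to obtain $(\eta,f)$ as weak-$*$ limits of a family $(\eta_N,f_N)$ produced by Lemma \ref{BBFunctLemma} applied to large subsets $E_1^{(N)}\subset E_1\cap\mathcal E_N(\Lambda)$. The driving observation is that on $\mathcal E_N(\Lambda)$ one has $|\mathcal C(g_j\mu)|\le 1/N$ for $j\le N$, so $\eta_N$ nearly annihilates $\{g_1,\dots,g_N\}$, and the error vanishes in the limit.

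First I would invoke Theorem \ref{FCharacterization}: every $\lambda\in E_1$ has positive upper $\gamma$-density in $\mathcal E_N(\Lambda)$ for every $N$. A Vitushkin-type covering argument, built on Theorem \ref{TolsaTheorem}\,(1)--(2) and Proposition \ref{GammaPlusThm}, extracts a compact set $E_1^{(N)}\subset E_1\cap\mathcal E_N(\Lambda)$ with $\gamma(E_1^{(N)})\ge c_0\gamma(E_1)$ for some absolute $c_0>0$. Applying Lemma \ref{BBFunctLemma} to $(E_1^{(N)},\{g_j\})$ produces $\tilde\eta_N\in M_0^+(E_1^{(N)})$ with $\|\mathcal C_\epsilon(\tilde\eta_N)\|_\infty\le 1$ and $\|\tilde\eta_N\|\gtrsim\gamma(E_1)$, together with $\tilde f_N\in L^\infty(\mu)$ satisfying property (3) of that lemma. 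Rescale so that $\|\eta_N\|=\gamma(E_1)$ and let $f_N$ be the corresponding function; since the scaling factor is bounded by an absolute constant, the bounds $\|\mathcal C_\epsilon(\eta_N)\|_\infty\le C_{12}$ and $\|f_N\|_{L^\infty(\mu)}\le C_{12}$ hold uniformly in $N$. Combining \eqref{BBFunctLemmaEq1} with the defining bound on $\mathcal E_N(\Lambda)$ gives
\[
\ \left|\int f_N g_j\, d\mu\right|=\left|\int \mathcal C(g_j\mu)\, d\eta_N\right|\le \gamma(E_1)/N,\qquad 1\le j\le N.
\]

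Next pass to weak-$*$ subsequential limits $f_N\to f$ in $L^\infty(\mu)$ and $\eta_N\to \eta$ in the weak-$*$ topology of measures on the compact set $E_1$. Lower semicontinuity and tightness deliver $\|\eta\|=\gamma(E_1)$, $\|f\|_{L^\infty(\mu)}\le C_{12}$, and $\|\mathcal C_\epsilon(\eta)\|_\infty\le C_{12}$; uniform convergence $\mathcal C(\eta_N)\to \mathcal C(\eta)$ on compact subsets of $E_1^c$ gives $f=\mathcal C(\eta)$ there, hence $f(\infty)=0$ and $f'(\infty)=-\|\eta\|=-\gamma(E_1)$. The displayed estimate forces $\int fg_j\, d\mu=0$ for every $j$; the $L^s(\mu)$-density of $\{g_j\}$ in $R^t(K,\mu)^\perp$ together with duality then places $f$ in $R^{t,\infty}(K,\mu)$. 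For \eqref{BBFRLambdaEq1}, apply \eqref{BBFunctLemmaEq2} to $(f_N,\eta_N)$ and use $f_N=\mathcal C(\eta_N)$ on $E_1^c$ to rearrange
\[
\ \mathcal C(f_N g_j\mu)(\lambda)=\mathcal C(\eta_N)(\lambda)\mathcal C(g_j\mu)(\lambda)-\mathcal C\bigl(\mathcal C(g_j\mu)\eta_N\bigr)(\lambda),\quad \lambda\in E_1^c.
\]
For $j\le N$, the measure $\mathcal C(g_j\mu)\eta_N$ has total variation at most $\gamma(E_1)/N$, so the last term is dominated by $\gamma(E_1)/(N\,\operatorname{dist}(\lambda,E_1))$ and tends to $0$. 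Pairing $\eta_N$ against the continuous kernel $1/(z-\lambda)$ and $f_N$ against $g_j(w)/(w-\lambda)\in L^1(\mu)$ (valid for a.a.\ $\lambda$) pushes the two remaining terms to $\mathcal C(\eta)(\lambda)\mathcal C(g_j\mu)(\lambda)$ and $\mathcal C(fg_j\mu)(\lambda)$ respectively, yielding \eqref{BBFRLambdaEq1} $\gamma|_{E_1^c}$-a.a.

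The principal obstacle is the quantitative Vitushkin/Vitali extraction in the first step: converting the bare positive upper $\gamma$-density of $\mathcal E_N(\Lambda)$ at every point of $E_1$ into a uniform lower bound $\gamma(E_1^{(N)})\ge c_0\gamma(E_1)$ demands a covering argument calibrated to the non-$\sigma$-subadditive analytic capacity and genuinely relies on the $\gamma\approx\gamma_+$ equivalence together with the $N_2$-framework of Proposition \ref{GammaPlusThm}. A secondary technical point is upgrading the identity in the last step from area-a.a.\ to $\gamma$-a.a., which should follow by combining the analyticity of $\mathcal C(\eta)$ off $E_1$ with Tolsa's weak-$(1,1)$ inequality (Theorem \ref{TolsaTheorem}\,(3)) applied to the maximal Cauchy transform of $(f_N-f)g_j\mu$.
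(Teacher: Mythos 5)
Your proposal has a genuine gap in the very first (and load-bearing) step. You claim that since every $\lambda\in E_1$ has positive upper $\gamma$-density in $\mathcal E_N(\Lambda)$, a covering argument produces a compact $E_1^{(N)}\subset E_1\cap\mathcal E_N(\Lambda)$ with $\gamma(E_1^{(N)})\ge c_0\gamma(E_1)$. This is false, because positive upper $\gamma$-density of $\mathcal E_N$ \emph{near} a point $\lambda$ does not put $\lambda$ itself in $\mathcal E_N$, and indeed $E_1\cap\mathcal E_N$ can have zero analytic capacity. Concretely, take $E_1\subset\mathcal F_+(\Lambda)\cap\Gamma_n$. For $\lambda\in E_1$ we have $v^+(g_j\mu,\Gamma_n,\beta_n)(\lambda)=0$, which by Plemelj's formula \eqref{VPlusBeta} means
\[
\mathcal C(g_j\mu)(\lambda)=-\frac{e^{-i\beta_n}(g_jh)(\lambda)}{2L((\lambda-z_n)e^{-i\beta_n}+z_n)},
\]
and this is generically nonzero wherever $g_jh\neq 0$. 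So such $\lambda$ are \emph{not} in $\mathcal E_N$ for large $N$. The positive $\gamma$-density in \eqref{FCEq4} at points of $\mathcal F_+$ is achieved by points of $\mathcal E_N$ lying in the upper cone region $U_{\Gamma_n}$, off the graph — this is exactly what the last display in the proof of Theorem \ref{FCharacterization} says. Consequently $E_1\cap\mathcal E_N$ need not be large (it may even be empty up to a $\gamma$-null set), and there is no $E_1^{(N)}\subset E_1\cap\mathcal E_N$ on which to run Lemma \ref{BBFunctLemma}. Requiring $\eta_N\in M_0^+(E_1)$ and $\eta_N$ supported in $\mathcal E_N$ is precisely what cannot be arranged simultaneously when $E_1\subset\mathcal F_+\cup\mathcal F_-$. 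If you instead allowed $E_1^{(N)}$ to sit in $\mathcal E_N$ near (but off) $E_1$, then $\eta\in M_0^+(E_1)$, $f=\mathcal C(\eta)$ analytic off $E_1$, and $f'(\infty)=-\gamma(E_1)$ would all need separate justification at the limit, and the nice $\gamma\approx\gamma_+$ bookkeeping would not transfer automatically.

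The paper's proof sidesteps this by first using semiadditivity (Theorem \ref{TolsaTheorem}(2)) to reduce to the three cases $E_1\subset\mathcal F_0$, $E_1\subset\mathcal F_+$, $E_1\subset\mathcal F_-$. In the $\mathcal F_0$ case your idea works essentially verbatim (there $E_1\subset\mathcal E_N$ for all $N$, and Lemma \ref{BBFunctLemma} applied directly to $E_1$ gives $\int f_1g_j\,d\mu=-\int\mathcal C(g_j\mu)\,d\eta_1=0$). The real content is in the $\mathcal F_\pm$ cases: there one takes $f_1,\eta_1$ from Lemma \ref{BBFunctLemma} but must \emph{correct} $f_1$ by the Plemelj jump, $f_2=f_1\mp\tfrac12\sum_n e^{-i\beta_n}L((z-z_n)e^{-i\beta_n}+z_n)^{-1}w_n(z)$, so that the orthogonality integral becomes $-\sum_n\int_{F_n}v^\pm(g_j\mu,\Gamma_n,\beta_n)\,d\eta_1$, which vanishes on $\mathcal F_\pm$. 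Without this correction (or an equivalent device) the orthogonality $\int fg_j\,d\mu=0$ simply does not hold, because the raw Cauchy transform is not small on $E_1\subset\mathcal F_\pm$. Your approach, as written, misses this jump correction entirely and collapses in the non-zero-density case.
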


From the semi-additivity of $\gamma$ (see Theorem \ref{TolsaTheorem} (2)), we have
\[
\ \gamma(E_1) \le A_T(\gamma(E_1 \cap \mathcal F_0(\Lambda)) + \gamma(E_1 \cap \mathcal F_+(\Lambda)) + \gamma(E_1 \cap \mathcal F_-(\Lambda))).
\]
Hence,
\[
\ \max(\gamma (E_1 \cap \mathcal F_0 (\Lambda) ),~ \gamma (E_1 \cap \mathcal F_+ (\Lambda) ),~ \gamma (E_1 \cap \mathcal F_- (\Lambda) )) \ge \dfrac{1}{3A_T}\gamma (E_1).
 \]
 Therefore, we shall prove Lemma \ref{BBFRLambda} assuming $E_1 \subset \mathcal F_0 (\Lambda),$ $E_1 \subset \mathcal F_+ (\Lambda),$ or $E_1 \subset \mathcal F_- (\Lambda).$ 
In the following proofs, we fix the measure $\eta = \eta_1$ and the function $f = f_1$ from Lemma \ref{BBFunctLemma} for $\{g_n\} = \Lambda$ and $E=E_1$.

\begin{proof} (Lemma \ref{BBFRLambda} assuming $E_1 \subset \mathcal F_0 (\Lambda)$):  From \eqref{BBFunctLemmaEq1} and \eqref{BBFunctLemmaEq2}, we see that $f_1\in R^{t,\i}(K, \mu)$ and $f_1(\lambda)\mathcal C(g_j\mu) (\lambda) = \mathcal C(f_1g_j\mu) (\lambda,~\gamma |_{E_1^c}-a.a.$ for $j \ge 1.$
 Set 
 \[
 \ f = \dfrac{f_1}{\|\eta_1\|} \gamma (E_1) \text{ and } \eta  = \dfrac{\eta_1}{\|\eta_1\|} \gamma (E_1). 
 \]
 Then $f$ and $\eta$ satisfy the properties of the lemma.
 \end{proof}
 
 \begin{proof} (Lemma \ref{BBFRLambda} assuming $E_1 \subset \mathcal F_+ (\Lambda)$): Using Corollary \ref{ACIncreasing}, we assume that there exists a positive integer $n_0$ such that 
$E_1\subset \bigcup_{n=1}^{n_0}\Gamma_n.$
 Put $E_1 = \cup_{n=1}^{n_0}F_n,$ where $F_n \subset \Gamma_n$ and $F_n \cap F_m = \emptyset$ for $n \ne m.$ Since $\eta_1$ is of linear growth, we can set $\eta_1 = \sum_{n=1}^{n_0} w_n(z)\mathcal H^1 |_{\Gamma_n},$ where $w_n$ is supported on $F_n.$ The function 
 \[
 \ f_2(z) = f_1(z) - \frac{1}{2} \sum_{n=1}^{n_0} e^{-i\beta_n}L((z - z_n)e^{-i\beta_n} + z_n)^{-1}w_n(z).
 \] 
 is the non-tangential limit of $f_1$ from the bottom on $E_1$ (Lemma \ref{PFLipschitz}). Thus, 
 $f_2,w_n\in L^\infty(\mu)$ and $\|f_2\|_{L^\infty(\mu)} \le C_{12}.$
 From \eqref{BBFunctLemmaEq1}, we get
 \begin{eqnarray}\label{BBFGEq3} 
 \begin{aligned}
 \ \int f_2 g_jd\mu = &  - \int \mathcal C(g_j\mu)  d\eta_1 - \frac{1}{2} \sum_{n=1}^{n_0} \int_{F_n} e^{-i\beta_n}L((z - z_n)e^{-i\beta_n} + z_n)^{-1}g_j h d\eta_1 \\
 \    = & - \sum_{n=1}^{n_0} \int_{F_n} v^+(g_j\mu, \Gamma_n, \beta_n) d\eta_1.
 \end{aligned}
 \end{eqnarray}
 Similarly, for $\lambda\in E_1^c$ and by \eqref{BBFunctLemmaEq2}, we have
 \begin{eqnarray}\label{BBFGEq4} 
 \ \int \dfrac{f_2(z)-f_2(\lambda)}{z-\lambda} g_j(z)d\mu (z) = - \sum_{n=1}^{n_0} \int_{F_n} v^+(g_j\mu, \Gamma_n, \beta_n)(z)\dfrac{d\eta_1(z)}{z-\lambda} .
 \end{eqnarray}
 Since $v^+(g_j\mu, \Gamma_n, \beta_n)(z) = 0, ~z \in F_n,~ \mathcal H^1 |_{\Gamma_n}-a.a.,$ by \eqref{BBFGEq3}, we get 
 $f_2\in R^{t,\i}(K, \mu).$
 Similarly, from \eqref{BBFGEq4}, we see that $\frac{f_2(z)-f_2(\lambda)}{z-\lambda}\in R^{t,\i}(K, \mu)$ and  $f_2(\lambda)\mathcal C(g_j\mu) (\lambda) = \mathcal C((f_2 g_j\mu) (\lambda)$ for $\lambda\in E_1^c,~\gamma-a.a.$ 
  Set 
 \[
 \ f = \dfrac{f_2}{\|\eta_1\|} \gamma (E_1) \text{ and } \eta  = \dfrac{\eta_1}{\|\eta_1\|} \gamma (E_1). 
 \]
 Then $f$ and $\eta$ satisfy the properties of the lemma.
 \end{proof}
 
 The proof of Lemma \ref{BBFRLambda} for $E_1 \subset \mathcal F_- (\Lambda)$ is the same as that for $E_1 \subset \mathcal F_+ (\Lambda)$ if we modify the definition of $f_2$ by the following
\[
 \ f_2(z) = f_1(z) + \frac{1}{2} \sum_{n=1}^{n_0} e^{-i\beta_n}L((z - z_n)e^{-i\beta_n} + z_n)^{-1}w_n(z).
 \] 
 
 We now show the properties of $g_j\mu$ on $\mathcal F(\Lambda)$ are preserved for all annihilating measures $g\mu$ for $g\in R^t(K, \mu)^\perp.$  

\begin{theorem}\label{FLambdaForW}
Let $\Gamma_n$, $\Gamma$, and $d\mu = hd\mathcal H^1 |_{\Gamma} + d\mu_s$ be as in Lemma \ref{GammaExist}. Then
for $g\in R^t(K, \mu)^\perp,$
\begin{eqnarray}\label{FLambdaForWEq1}
\ \mathcal C(g\mu)(\lambda) = 0,~ \gamma|_{\mathcal F_0(\Lambda)}-a.a.,  
\end{eqnarray}
\begin{eqnarray}\label{FLambdaForWEq2}
\ v^+(g\mu, \Gamma_n, \beta_n)(\lambda) = 0, ~ \gamma |_{\mathcal F_+(\Lambda)\cap \Gamma_n}-a.a. ~\text{for}~  n \ge 1,  
\end{eqnarray}
and
\begin{eqnarray}\label{FLambdaForWEq3}
\ v^-(g\mu, \Gamma_n,  \beta_n)(\lambda) = 0,~ \gamma |_{\mathcal F_-(\Lambda)\cap \Gamma_n}-a.a. ~\text{for}~  n \ge 1.  
\end{eqnarray}
\end{theorem}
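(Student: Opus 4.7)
The plan is to extend the vanishing properties from the $L^s(\mu)$-dense family $\Lambda = \{g_j\}$ to an arbitrary $g \in R^t(K,\mu)^\perp$ by approximation, invoking Tolsa's weak-$(1,1)$ estimate for $\mathcal{C}_*$ (Theorem \ref{TolsaTheorem}(3)) for the Cauchy-transform part and Chebyshev's inequality on the Lipschitz graphs for the density-jump term in the Plemelj formula. Choose $g_{j_k} \in \Lambda$ with $\|g - g_{j_k}\|_{L^s(\mu)} \to 0$; by H\"older, the signed measure $\nu_k := (g - g_{j_k})\mu$ satisfies $\|\nu_k\| \le \|g - g_{j_k}\|_{L^s(\mu)}\,\mu(K)^{1/t} \to 0$.

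For \eqref{FLambdaForWEq1}, the definition of $\mathcal{F}_0(\Lambda)$ gives $\mathcal{C}(g_{j_k}\mu) = 0$ on $\mathcal{F}_0(\Lambda)$ outside a $\gamma$-null set common to all $k$. Hence $\mathcal{C}(g\mu) = \mathcal{C}(\nu_k)$ there, and Theorem \ref{TolsaTheorem}(3) yields, for every $\epsilon > 0$,
\[
\gamma\bigl(\mathcal{F}_0(\Lambda) \cap \{|\mathcal{C}(g\mu)| > \epsilon\}\bigr) \le \gamma(\{\mathcal{C}_*(\nu_k) > \epsilon\}) \le \frac{C_T\|\nu_k\|}{\epsilon} \to 0.
\]
Taking the countable union over $\epsilon = 1/m$ and invoking the semiadditivity in Theorem \ref{TolsaTheorem}(2) yields \eqref{FLambdaForWEq1}.

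For \eqref{FLambdaForWEq2}, fix $\lambda \in \mathcal{Z}_+(\Lambda,\Gamma_n) \cap \mathcal{ND}(\mu)$. Each $v^+(g_{j_k}\mu,\Gamma_n,\beta_n)(\lambda) = 0$ by definition, so by the Plemelj formula in Theorem \ref{GPTheorem1}(b) applied to $\nu_k$ (whose Radon--Nikodym decomposition with respect to $\mathcal{H}^1|_\Gamma$ is $(g-g_{j_k})h\,d\mathcal{H}^1|_\Gamma + (g-g_{j_k})\,d\mu_s$),
\[
v^+(g\mu,\Gamma_n,\beta_n)(\lambda) = v^+(\nu_k,\Gamma_n,\beta_n)(\lambda) = \mathcal{C}(\nu_k)(\lambda) + \frac{e^{-i\beta_n}(g-g_{j_k})(\lambda)\,h(\lambda)}{2L(\,\cdot\,)}.
\]
The Cauchy-transform part is controlled exactly as in \eqref{FLambdaForWEq1}. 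For the density part, $|L| = \frac{1}{2\pi}$ on $\Gamma_n$, and Lemma \ref{LGCTL2Bounded} gives $\gamma|_{\Gamma_n} \le \mathcal{H}^1|_{\Gamma_n}$; combining Chebyshev with $\mu|_{\Gamma_n \cap \mathcal{ND}(\mu)} = h\,d\mathcal{H}^1|_{\Gamma_n}$ from Lemma \ref{GammaExist},
\[
\gamma\bigl(\Gamma_n \cap \{\pi|g-g_{j_k}|h > \epsilon/2\}\bigr) \le \frac{2\pi}{\epsilon}\int_{\Gamma_n \cap \mathcal{ND}(\mu)} |g-g_{j_k}|\,d\mu \le \frac{2\pi}{\epsilon}\|g-g_{j_k}\|_{L^1(\mu)} \to 0.
\]
Semiadditivity across $\epsilon = 1/m$ then yields \eqref{FLambdaForWEq2} on $\mathcal{Z}_+(\Lambda,\Gamma_n) \cap \mathcal{ND}(\mu)$. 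For the remaining contributions to $\mathcal{F}_+(\Lambda) \cap \Gamma_n$ coming from $\mathcal{Z}_+(\Lambda,\Gamma_m)$ with $m \ne n$, the graphs $\Gamma_n$ and $\Gamma_m$ admit common tangents $\mathcal{H}^1$-a.e.\ on their intersection, so the upper-cone $\gamma$-nontangential limits with respect to the two parametrizations agree up to a $\gamma$-null set, and the same estimate closes the argument. The proof of \eqref{FLambdaForWEq3} is identical with lower cones.

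The main obstacle is the density-jump term on $\Gamma_n$: bridging $L^s(\mu)$-convergence of $g_{j_k}$ to $g$ with $\gamma$-convergence of the pointwise factor $(g-g_{j_k})h/L$ along the graph. It is dispatched by the equivalence $\gamma \asymp \mathcal{H}^1$ on rectifiable graphs (Lemma \ref{LGCTL2Bounded}) combined with the density identity $\mu|_{\mathcal{ND}(\mu)} = h\,d\mathcal{H}^1|_\Gamma$ from Lemma \ref{GammaExist}, reducing everything to a Chebyshev estimate in $L^1(\mu)$ controlled by H\"older.
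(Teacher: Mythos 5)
Your proof is correct but takes a genuinely different route from the paper. The paper argues by contradiction: assuming a compact $E_1 \subset \mathcal F_0(\Lambda)$ (resp.\ $\subset \mathcal F_\pm(\Lambda)\cap\Gamma_n$) of positive analytic capacity on which $\operatorname{Re}\mathcal C(g\mu)$ (resp.\ $\operatorname{Re}v^\pm(g\mu,\cdot)$) is strictly positive, it invokes the building-block Lemma~\ref{BBFRLambda} to manufacture $f \in R^{t,\infty}(K,\mu)$ and a positive measure $\eta$ on $E_1$ for which $\int \operatorname{Re}v^\pm(g\mu,\cdot)\,d\eta = -\operatorname{Re}\int fg\,d\mu = 0$, a contradiction since $\eta \geq 0$. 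You instead pass the vanishing directly from the dense family $\Lambda$ to $g$ by approximation: H\"older gives $\|\nu_k\|\to 0$, Tolsa's weak-$(1,1)$ bound (Theorem~\ref{TolsaTheorem}(3)) kills the Cauchy-transform contribution, and the Chebyshev estimate on the graph — via $\gamma|_{\Gamma_n} \leq \mathcal H^1|_{\Gamma_n}$ and $h\,d\mathcal H^1|_\Gamma \leq d\mu$ — kills the density-jump term. This is more self-contained (it does not reuse the heavy Lemma~\ref{BBFRLambda}), avoids the contradiction/positive-capacity extraction, and makes the $L^s\text{--}\gamma$ bridge explicit; the paper's route, on the other hand, recycles machinery already built for Theorems~A and~B.

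One wrinkle to flag: your treatment of the cross-graph contributions (points of $\Gamma_n$ lying in $\mathcal Z_+(\Lambda,\Gamma_m)\cap\mathcal{ND}(\mu)$ for $m\neq n$) is too quick. Having a common tangent $\mathcal H^1$-a.e.\ on $\Gamma_n\cap\Gamma_m$ does not by itself force the two \emph{upper} cones to coincide — the parametrizations may induce opposite orientations, so $v^+(\cdot,\Gamma_n,\beta_n)$ can equal $v^-(\cdot,\Gamma_m,\beta_m)$ at such points, and then $\lambda\in\mathcal Z_+(\Lambda,\Gamma_m)$ controls the wrong boundary value relative to $\Gamma_n$. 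The paper's own statement and proof make the same tacit identification of $\mathcal F_+(\Lambda)\cap\Gamma_n$ with $\mathcal Z_+(\Lambda,\Gamma_n)\cap\mathcal{ND}(\mu)$ (up to $\gamma$-null sets), so this is a latent imprecision in the formulation rather than a defect of your method; still, you should either restrict the conclusion to $\mathcal Z_+(\Lambda,\Gamma_n)\cap\mathcal{ND}(\mu)$, or argue that the $\{\Gamma_n\}$ can be chosen with coherent orientations on overlaps.
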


\begin{proof} Equation \eqref{FLambdaForWEq1}: Assume that there exists a compact subset $E_1\subset \mathcal F_0 (\Lambda)$ such that $\gamma (E_1) > 0$ and  
 \begin{eqnarray}\label{FCForRAssump}
 \ Re(\mathcal C(g\mu)) > 0,~ \lambda\in E_1.
 \end{eqnarray}
  Using Lemma \ref{CTMaxFunctFinite}, we assume that $\mathcal C_*(g\mu) (\lambda) < M_1$ for $\lambda\in E_1$. Let $f\in R^{t,\i}(K, \mu)$ and $\eta \in M_0^+(E_1)$ be as in Lemma \ref{BBFRLambda}. 
  By \eqref{BBFunctLemmaEq1}, we get
  \[
  \ \int Re(\mathcal C(g\mu)(z)) d\eta(z) = - Re \left ( \int f(z) g(z) d \mu(z)\right ) = 0,
  \]
  which implies $Re(\mathcal C(g\mu)(z)) = 0,~ \eta-a.a..$ This contradicts to \eqref{FCForRAssump}.
  
   Equation \eqref{FLambdaForWEq2}: Assume that there exists a compact subset $E_1\subset \mathcal F_+ (\Lambda) \cap \Gamma_1$ such that $\gamma (E_1) > 0$ and  
 \begin{eqnarray}\label{FCForRAssump2}
 \ Re(v^+(g\mu, \Gamma_1, \beta_1)(z)) > 0,~ \lambda\in E_1.
 \end{eqnarray}
  Using Lemma \ref{CTMaxFunctFinite}, we assume that $\mathcal C_*(g\mu) (\lambda) < M_1$ for $\lambda\in E_1$. Let $f\in R^{t,\i}(K, \mu)$ and $\eta \in M_0^+(E_1)$ be as in Lemma \ref{BBFRLambda}. Similar to \eqref{BBFGEq3}, we see that
   \[
  \ \int Re(v^+(g\mu, \Gamma_1, \beta_1)(z)) d\eta(z) = - Re \left ( \int f(z) g(z) d \mu(z)\right ) = 0,
  \]
  which implies $Re(v^+(g\mu, \Gamma_1, \beta_1)(z)) = 0,~ \eta-a.a..$ This contradicts to \eqref{FCForRAssump2}.
  
  The proof of the equation \eqref{FLambdaForWEq3} is the same as above.
\end{proof}

As a simple application of Theorem \ref{FLambdaForW} \eqref{FLambdaForWEq1} and the fact that $\area (\mathcal F_+(\Lambda)\cup \mathcal F_-(\Lambda)) = 0$, we have the corollary below.

\begin{corollary} \label{acZero} 
For $g\perp R^t(K, \mu)$,
$\mathcal C(g\mu)(z) = 0, ~ \area |_{\mathcal F(\Lambda)}-a.a.$.
\end{corollary}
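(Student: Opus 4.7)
The plan is to split $\mathcal F(\Lambda) = \mathcal F_0(\Lambda)\cup \mathcal F_+(\Lambda)\cup \mathcal F_-(\Lambda)$ as in Definition \ref{FRDef} and handle the two pieces separately, relying on the fact that $\area$ is dominated by $\gamma$ in the sense that $\gamma(E)=0$ implies $\area(E)=0$ (for instance, via $\gamma(\D(\lambda,r))=r$ and monotonicity, since a set of positive area contains a point of positive upper area density, hence a disk of positive $\gamma$).

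For the $\mathcal F_+\cup \mathcal F_-$ part, Lemma \ref{GammaExist} together with Definition \ref{FPMR1Def} shows that $\mathcal F_+(\Lambda)\cup \mathcal F_-(\Lambda)\subset \bigcup_n \Gamma_n$, where each $\Gamma_n$ is a (rotated) Lipschitz graph. Each such graph has vanishing two-dimensional Lebesgue measure, and a countable union of area-null sets is area-null, so
\[
\ \area(\mathcal F_+(\Lambda)\cup \mathcal F_-(\Lambda)) = 0.
\]
Consequently $\mathcal C(g\mu)=0$ holds trivially $\area|_{\mathcal F_+(\Lambda)\cup \mathcal F_-(\Lambda)}$-a.e., regardless of the actual value of the Cauchy transform there.

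For the $\mathcal F_0(\Lambda)$ part, I apply Theorem \ref{FLambdaForW} \eqref{FLambdaForWEq1}, which asserts $\mathcal C(g\mu)(\lambda)=0$ for $\gamma|_{\mathcal F_0(\Lambda)}$-a.a.\ $\lambda$. Let $E=\{\lambda\in \mathcal F_0(\Lambda):\mathcal C(g\mu)(\lambda)\neq 0\}$; by hypothesis $\gamma(E)=0$. Since any set of zero analytic capacity has zero area measure (as noted above), $\area(E)=0$, and therefore $\mathcal C(g\mu)=0$ holds $\area|_{\mathcal F_0(\Lambda)}$-a.e.

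Combining the two pieces via $\mathcal F(\Lambda)=\mathcal F_0(\Lambda)\cup \mathcal F_+(\Lambda)\cup\mathcal F_-(\Lambda)$ yields $\mathcal C(g\mu)(z)=0$ for $\area|_{\mathcal F(\Lambda)}$-a.a.\ $z$, as claimed. There is no real obstacle here; the only subtlety is invoking the comparison between $\gamma$-null and $\area$-null sets and recognizing that Lipschitz graphs are area-null, both of which are entirely standard.
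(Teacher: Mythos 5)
Your proof is correct and follows essentially the same route as the paper, which also invokes Theorem \ref{FLambdaForW} \eqref{FLambdaForWEq1} on $\mathcal F_0(\Lambda)$ together with the fact that $\area(\mathcal F_+(\Lambda)\cup \mathcal F_-(\Lambda))=0$ because those sets live on a countable union of Lipschitz graphs. The only soft spot is your parenthetical justification that $\gamma$-null implies $\area$-null: a set of positive area need not contain a disk, so the argument really rests on the classical inequality $\gamma(E)\ge c\,\area(E)^{1/2}$ (or equivalently on estimating $\CT(\chi_E\,d\area)$), not on monotonicity through a contained disk; the fact itself is standard, so this does not affect the conclusion.
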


The following corollary is a direct outcome of Theorem \ref{FLambdaForW}.

\begin{corollary}\label{FUnique} 
Let $\Lambda ' = \{g_n'\}_{n=1}^\infty \subset R^t(K, \mu)^\perp$ be a dense subset. 
Then
 \[
 \ \mathcal F_0(\Lambda) \approx \mathcal F_0 (\Lambda '), ~ \mathcal F_+ (\Lambda)\approx \mathcal F_+(\Lambda '), ~ \mathcal F_- (\Lambda) \approx \mathcal F_-(\Lambda '), ~ \gamma-a.a..
 \]
Hence, $\mathcal F(\Lambda)$ and $\mathcal R(\Lambda)$ are independent of choices of $\Lambda$ up to a $\gamma$ zero set.
\end{corollary}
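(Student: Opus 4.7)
The plan is to invoke Theorem \ref{FLambdaForW} symmetrically, letting each element of one dense family force the correct vanishing on the non-removable pieces defined by the other. Because the Lipschitz graphs $\{\Gamma_n\}$ of Lemma \ref{GammaExist} depend only on $\mu$, I would fix the same sequence for both definitions so that $\mathcal F_+(\Lambda),\mathcal F_-(\Lambda)$ and $\mathcal F_+(\Lambda'),\mathcal F_-(\Lambda')$ are directly comparable; by Theorem \ref{FCharacterization} this costs nothing, as neither $\mathcal F$ nor $\mathcal R$ depends on the choice of $\{\Gamma_n\}$ up to a $\gamma$-null set anyway.

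To show $\mathcal F_0(\Lambda)\subset \mathcal F_0(\Lambda')$ up to $\gamma$-null, I would apply Theorem \ref{FLambdaForW} \eqref{FLambdaForWEq1} to each $g_n'\in\Lambda'$: this produces, for every $n$, an exceptional set $N_n$ with $\gamma(N_n)=0$ outside which $\mathcal C(g_n'\mu)$ vanishes on $\mathcal F_0(\Lambda)$. Countable semi-additivity (Theorem \ref{TolsaTheorem} (2)) gives $\gamma\bigl(\bigcup_n N_n\bigr)=0$, and since $\mathcal F_0(\Lambda)\subset\mathcal{ZD}(\mu)$ by construction, every point of $\mathcal F_0(\Lambda)\setminus\bigcup_n N_n$ satisfies the defining property of $\mathcal F_0(\Lambda')$. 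The reverse inclusion follows by swapping the roles of $\Lambda$ and $\Lambda'$. The same scheme, now applied graph-by-graph via \eqref{FLambdaForWEq2} and \eqref{FLambdaForWEq3} and then unioned over $m$, produces $\mathcal F_+(\Lambda)\approx\mathcal F_+(\Lambda')$ and $\mathcal F_-(\Lambda)\approx\mathcal F_-(\Lambda')$; taking the union of the three pieces delivers $\mathcal F(\Lambda)\approx\mathcal F(\Lambda')$.

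For the removable counterparts, the equivalences transfer by complementation on the appropriate density strata. Since $\mathcal{ZD}(\mu)=\mathcal F_0(\Lambda)\sqcup\mathcal R_0(\Lambda)=\mathcal F_0(\Lambda')\sqcup\mathcal R_0(\Lambda')$, the equivalence of the $\mathcal F_0$-parts forces $\mathcal R_0(\Lambda)\approx\mathcal R_0(\Lambda')$; and writing $\mathcal N_\pm(\Lambda,\Gamma_n)=\Gamma_n\setminus\mathcal Z_\pm(\Lambda,\Gamma_n)$ converts the already established equivalences of the $\mathcal Z_\pm$-sets into equivalences of the $\mathcal N_\pm$-sets, which after intersection with $\mathcal{ND}(\mu)$ and union over $n$ yield $\mathcal R_1(\Lambda)\approx\mathcal R_1(\Lambda')$ and hence $\mathcal R(\Lambda)\approx\mathcal R(\Lambda')$. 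No step here presents a genuine obstacle: Theorem \ref{FLambdaForW} already carries all the analytic content, so what remains is a short bookkeeping argument patching together countably many $\gamma$-null exceptional sets by semi-additivity.
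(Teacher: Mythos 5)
Your argument is exactly what the authors mean by ``a direct outcome of Theorem~\ref{FLambdaForW}'': apply \eqref{FLambdaForWEq1}--\eqref{FLambdaForWEq3} to each $g_n'\in\Lambda'$ to get $\mathcal F_0(\Lambda)\subset\mathcal F_0(\Lambda')$, $\mathcal F_\pm(\Lambda)\cap\Gamma_n\subset\mathcal Z_\pm(\Lambda',\Gamma_n)$ up to $\gamma$-null, union the countably many exceptional sets by semiadditivity, and then swap $\Lambda$ and $\Lambda'$; the removable-set equivalences follow by complementation within $\mathcal{ZD}(\mu)$ and within $\Gamma_n\cap\mathcal{ND}(\mu)$. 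This is the same route the paper takes (the paper simply omits the bookkeeping), and the proposal is correct.
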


\section{\textbf{Analytic bounded point evaluations for $\rtkmu$}}

Let $\Gamma_n$, $\Gamma$, and $d\mu = hd\mathcal H^1 |_{\Gamma} + d\mu_s$ be as in Lemma \ref{GammaExist}. From Theorem \ref{FCharacterization} and Corollary \ref{FUnique}, $\mathcal F(\Lambda)$ and $\mathcal R(\Lambda)$ are independent of choices of $\Gamma_n$ and $\Lambda$ up to a zero set of analytic capacity. This leads the following definition for $\rtkmu.$

\begin{definition}\label{FRForRtDef}
We define the non-removable boundaries and the removable sets for $\rtkmu$ as the following:
\[
\begin{aligned}
\ & \mathcal F_0 = \mathcal F_0(\Lambda),~ \mathcal F_+ = \mathcal F_+(\Lambda),~ \mathcal F_- = \mathcal F_-(\Lambda),\text{ and }\mathcal F = \mathcal F(\Lambda). \\
\ & \mathcal R_0 = \mathcal R_0(\Lambda),~ \mathcal R_1 = \mathcal R_1(\Lambda),\text{ and }\mathcal R = \mathcal R(\Lambda).
\end{aligned}
\]
Set $\mathcal E_N = \mathcal E_N(\Lambda).$	
\end{definition}

\begin{proposition} \label{RTFRProp1} 
 If $S_\mu$ on $\rtkmu$ is pure, then 
$\text{spt}(\mu) \subset \overline{\mathcal R}.$
\end{proposition}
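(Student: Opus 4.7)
My plan is to argue the contrapositive: assuming $\operatorname{spt}(\mu) \not\subset \overline{\mathcal R}$, I will exhibit a non-trivial direct $L^t$ summand of $R^t(K,\mu)$, which contradicts the purity of $S_\mu$. Choose $\lambda_0 \in \operatorname{spt}(\mu)$ and $\delta > 0$ with $\D(\lambda_0,\delta) \cap \overline{\mathcal R} = \emptyset$; then $\mu(\D(\lambda_0,\delta)) > 0$ by definition of the support, and in particular $\D(\lambda_0,\delta) \cap \mathcal R = \emptyset$.

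The first key step is to check that $\mathbb{C} \approx \mathcal R \cup \mathcal F$ up to $\gamma$-null sets. Since $\mathcal{ZD}(\mu) \cup \mathcal{ND}(\mu) = \C$, and, by Corollary \ref{ZeroAC} together with Theorem \ref{GPTheorem1}, the principal values $\mathcal{C}(g_j \mu)(z)$ and the one-sided non-tangential values $v^\pm(g_j\mu,\Gamma_n,\beta_n)(z)$ all exist outside a $\gamma$-null set, Definitions \ref{R0ForRILambda} and \ref{FPMR1Def} partition $\mathcal{ZD}(\mu)$ into $\mathcal R_0 \cup \mathcal F_0$ and $\mathcal{ND}(\mu)$ into $\mathcal R_1 \cup (\mathcal F_+ \cup \mathcal F_-)$, both modulo $\gamma$-null sets. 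Combined with the previous paragraph, this forces $\D(\lambda_0,\delta) \subset \mathcal F$ modulo a $\gamma$-null (hence area-null) subset.

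Next I invoke Corollary \ref{acZero}: for every $g \in R^t(K,\mu)^\perp$, $\mathcal{C}(g\mu) = 0$ area-almost everywhere on $\mathcal F$, hence area-almost everywhere on the open disk $\D(\lambda_0,\delta)$. Applying the distributional identity \eqref{CTDistributionEq}, namely $\bar\partial\, \mathcal{C}(g\mu) = -\pi g\mu$, on this disk gives $g\mu|_{\D(\lambda_0,\delta)} = 0$ as a distribution, so $g = 0$ $\mu|_{\D(\lambda_0,\delta)}$-almost everywhere.

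Finally, for any $h \in L^t(\mu|_{\D(\lambda_0,\delta)})$ and any $g \in R^t(K,\mu)^\perp$, one has
\[
\int h\, g\, d\mu \;=\; \int_{\D(\lambda_0,\delta)} h\, g\, d\mu \;=\; 0,
\]
so by Hahn--Banach duality $h \in R^t(K,\mu)$. Hence $L^t(\mu|_{\D(\lambda_0,\delta)})$ is a non-trivial direct $L^t$ summand of $R^t(K,\mu)$, contradicting purity of $S_\mu$. The main obstacle is really the first step --- cleanly verifying that $\mathcal R \cup \mathcal F$ exhausts $\mathbb{C}$ modulo a $\gamma$-null set, where all the bookkeeping from the Radon--Nikodym decomposition of Lemma \ref{GammaExist} and the generalized Plemelj formula of Theorem \ref{GPTheorem1} has to be absorbed; the passage from the Cauchy transform to $g\mu$ and the final duality argument are then routine.
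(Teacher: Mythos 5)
Your proof is correct and takes essentially the same route as the paper: exclude a disk $\D(\lambda_0,\delta)$ disjoint from $\mathcal R$, show $\mathcal C(g\mu)=0$ area-a.e.\ there via Corollary~\ref{acZero}, apply the distributional identity $\bar\partial\,\mathcal C(g\mu)=-\pi g\mu$ to get $g\mu=0$ on the disk for every annihilator, and then use purity to force $\mu(\D(\lambda_0,\delta))=0$. The only difference is that you spell out the (correct, and worth making explicit) bookkeeping step that $\C\approx\mathcal R\cup\mathcal F$ up to a $\gamma$-null set, which the paper's one-line proof implicitly assumes when passing from ``$\D(\lambda_0,\delta)\cap\mathcal R=\emptyset$'' to ``$\D(\lambda_0,\delta)\subset\mathcal F$''.
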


\begin{proof}
If $\D(\lambda_0,\delta) \subset \mathcal F$, from Corollary \ref{acZero}, we see that $\mathcal C(g_j\mu)(z) = 0$ with respect to $\area|_{\D(\lambda_0,\delta)}-a.a.$ This implies that $\mu(\D(\lambda_0,\delta)) = 0$ by \eqref{CTDistributionEq} since $S_\mu$ is pure.
\end{proof}

\begin{proposition} \label{RTFRProp2} 
$\partial_1 K \subset \mathcal F, ~\gamma-a.a.$.
\end{proposition}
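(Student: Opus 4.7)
The strategy is to invoke the characterization of $\mathcal F$ given by Theorem \ref{FCharacterization}. That theorem says that, up to a set of zero analytic capacity, $\lambda \in \mathcal F = \mathcal F(\Lambda)$ if and only if
\[
 \ \overline{\lim_{\delta \to 0}} \dfrac{\gamma(\D(\lambda,\delta) \cap \mathcal E_N)}{\delta} > 0 \quad \text{for all } N \ge 1.
\]
So the plan is to exhibit, for every $\lambda \in \partial_1 K$, a large piece of $\mathcal E_N$ inside each $\D(\lambda,\delta)$, coming from the complement of $K$.

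The key observation is that $\mathbb C \setminus K \subset \mathcal E_N$ for every $N$. Indeed, for any $z_0 \notin K$ the function $w \mapsto (w - z_0)^{-1}$ lies in $\mathrm{Rat}(K)$; since $g_j \in R^t(K,\mu)^\perp$, this yields
\[
 \ \int \dfrac{g_j(w)}{w - z_0}\, d\mu(w) = 0.
\]
Because $\mathrm{dist}(z_0, K) > 0$, the integral is absolutely convergent and equals the principal-value Cauchy transform, so $\mathcal C(g_j\mu)(z_0) = 0$ for all $j \ge 1$. Thus every $z_0 \in \mathbb C \setminus K$ lies in $\mathcal E_N$ for every $N$ (the limit trivially exists and is $0$).

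Now fix $\lambda \in \partial_1 K$. Since $\D(\lambda,\delta) \setminus K \subset \D(\lambda,\delta) \cap \mathcal E_N$, monotonicity of $\gamma$ gives
\[
 \ \dfrac{\gamma(\D(\lambda,\delta) \cap \mathcal E_N)}{\delta} \;\ge\; \dfrac{\gamma(\D(\lambda,\delta) \setminus K)}{\delta}.
\]
Taking $\overline{\lim}$ as $\delta \to 0$ and applying the definition \eqref{BOne} of $\partial_1 K$, the right-hand side is strictly positive, uniformly in $N$. Hence $\lambda$ satisfies the condition in Theorem \ref{FCharacterization}, so $\lambda \in \mathcal F$ outside a $\gamma$-null set. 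This proves $\partial_1 K \subset \mathcal F$, $\gamma$-a.a.

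There is no real obstacle here; the work has all been done in Theorem \ref{FCharacterization}. The only thing to verify is the trivial fact that off $K$ the Cauchy transform of every annihilating measure $g_j\mu$ vanishes identically, after which the inclusion is immediate from the definition of $\partial_1 K$.
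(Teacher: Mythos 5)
Your proof is correct and follows essentially the same route as the paper: invoke Theorem \ref{FCharacterization} and note that $\D(\lambda,\delta)\setminus K \subset \D(\lambda,\delta)\cap \mathcal E_N$. The paper states that inclusion without justification; you helpfully spell out why it holds (off $K$ the Cauchy transform of each $g_j\mu$ vanishes because $(w-z_0)^{-1}\in\mathrm{Rat}(K)$ and $g_j\perp R^t(K,\mu)$), but the content is identical.
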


\begin{proof}
The proposition follows from Theorem \ref{FCharacterization} and the fact that for $\lambda \in \partial_1 K$, 
 \[
 \ \D(\lambda, \delta) \setminus K \subset \D(\lambda, \delta) \cap\mathcal E_N.
 \]
 \end{proof}
 
 The following Lemma is from Lemma B in \cite{ars02}.

\begin{lemma} \label{lemmaARS}
There are absolute constants $\epsilon _1, C_{13} > 0$ with the
following property. If $R > 0$ and $E \subset  \overline{\D(0, R)}$ with 
$\gamma(E) < R\epsilon_1$, then for $|\lambda| < \frac{R}{2}$ and polynomials $p$,
\[
\ |p(\lambda)| \le \dfrac{C_{13}}{\pi R^2} \int _{\overline{\D(0, R)}\setminus E} |p|\, d\area.
\]
\end{lemma}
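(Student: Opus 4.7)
The plan is to rescale to $R=1$ (under which $\gamma$ scales linearly and $d\area$ quadratically), and then represent $p(\lambda)$ through a Cauchy--Green formula using a cutoff adapted to $E$. Concretely, I would seek $\Phi\in C^1(\C)$ compactly supported in $\D(0,1)$, with $\Phi(\lambda)=1$, $\operatorname{supp}(\bar\partial\Phi)\subset\D(0,1)\setminus E$, $\operatorname{dist}(\operatorname{supp}(\bar\partial\Phi),\lambda)\ge\tfrac18$, and $\|\bar\partial\Phi\|_\infty\le C$. Granted such $\Phi$, the Cauchy--Green formula applied to the compactly supported $C^1$ function $p\Phi$ gives
\[
  p(\lambda)=p(\lambda)\Phi(\lambda)=-\frac{1}{\pi}\int\frac{p(z)\,\bar\partial\Phi(z)}{z-\lambda}\,d\area(z),
\]
and the pointwise bounds $|\bar\partial\Phi|\le C$ and $|z-\lambda|\ge\tfrac18$ on the support of $\bar\partial\Phi$ immediately deliver $|p(\lambda)|\le \frac{8C}{\pi}\int_{\overline{\D(0,1)}\setminus E}|p|\,d\area$, which is the desired conclusion after undoing the rescaling.

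The core of the proof is therefore the construction of $\Phi$. I would begin from a naive radial cutoff $\chi_0$ with $\chi_0\equiv 1$ on $\D(0,5/8)$, $\operatorname{supp}(\chi_0)\subset\D(0,7/8)$, $|\bar\partial\chi_0|\le C$; then $\bar\partial\chi_0$ lives in the annulus $\{5/8\le|z|\le 7/8\}$ at distance at least $\tfrac18$ from $\lambda$. This $\chi_0$ already gives the desired bound if $E$ does not meet that annulus, so the task is to remove $E\cap\{5/8\le|z|\le 7/8\}$ from the support of $\bar\partial\chi_0$. Using Theorem \ref{TolsaTheorem}(1) together with Proposition \ref{GammaPlusThm}, the hypothesis $\gamma(E)<\epsilon_1$ yields a positive measure $\nu$ on $E$ of $1$-linear growth with $\|\CT\nu\|_\infty\le C$ and $\|\nu\|\ge\gamma(E)/A_T$. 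I would then carry out a Vitushkin-type local modification: on a Whitney-type decomposition $\{Q\}$ of $E$ build smooth plateaus $\psi_Q$ equal to $1$ on a neighborhood of $E\cap Q$, regularized through $\CT\nu$ rather than through raw mollification, and define $\Phi=\chi_0\prod_Q(1-\psi_Q)$.

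The principal obstacle is controlling $\|\bar\partial\Phi\|_\infty$ in terms of $\gamma(E)$ (rather than in terms of $\operatorname{diam}(E)$ or $\area(E)$, which are too crude). Here the semiadditivity of analytic capacity (Theorem \ref{TolsaTheorem}(2)) is essential: it gives $\sum_Q\gamma(E\cap Q)\lesssim\gamma(E)<\epsilon_1$, and combined with the regularization through $\CT\nu$ this bounds the total perturbation introduced by the $\psi_Q$'s by an absolute constant multiple of $\epsilon_1$. Choosing $\epsilon_1$ small enough then guarantees that $\Phi=\chi_0\prod_Q(1-\psi_Q)$ remains within an absolute factor of $\chi_0$ in gradient norm, with the additional properties $\Phi(\lambda)=1$ and $\operatorname{supp}(\bar\partial\Phi)\subset\D(0,1)\setminus E$. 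This is precisely the Vitushkin--Paramonov localization scheme invoked later in the paper (compare Lemma \ref{BasicEstimate2}), specialized here to the single Cauchy kernel $1/(z-\lambda)$; once it has been carried out, the Cauchy--Green computation of the first paragraph finishes the proof.
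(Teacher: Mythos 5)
The paper does not prove this lemma; it simply cites it as ``Lemma B in \cite{ars02}'' (Aleman--Richter--Sundberg), so there is no ``paper's proof'' to compare against. Nevertheless the proposal contains a genuine gap, and it lies exactly in the step you treat as routine: the construction of the cutoff $\Phi$.

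You ask for $\Phi\in C^1(\C)$ compactly supported in $\D(0,1)$ with $\Phi(\lambda)=1$, $\|\bar\partial\Phi\|_\infty\le C$, and $\operatorname{supp}(\bar\partial\Phi)\subset\D(0,1)\setminus E$. The last requirement means $\bar\partial\Phi$ vanishes on an open neighborhood of $E$. This is simply not achievable for the sets the lemma must cover. Take $E=\mathbb{Q}^2\cap\D(0,1)$, which is dense in $\D(0,1)$ and has $\gamma(E)=0<\epsilon_1$. Then $\bar\partial\Phi$, being continuous, would vanish on a dense open subset of $\D(0,1)$, hence identically on $\D(0,1)$; $\Phi$ would then be analytic on $\D(0,1)$ and vanish near $\partial\D(0,1)$, so $\Phi\equiv 0$ by unique continuation, contradicting $\Phi(\lambda)=1$. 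Yet the lemma is trivially true for this $E$ (it has zero area), which shows the intended $\Phi$ is both impossible and unnecessary, i.e.\ your reduction loses the theorem.

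The second difficulty is the $L^\infty$ bound on $\bar\partial\Phi$. Writing $\Phi=\chi_0\prod_Q(1-\psi_Q)$ with $\psi_Q$ a plateau equal to $1$ on a neighborhood of $E\cap Q$ and supported in a Whitney-scaled $2Q$ forces $\|\bar\partial\psi_Q\|_\infty\gtrsim\ell(Q)^{-1}$, which blows up for small cubes no matter how small $\gamma(E)$ is. Semiadditivity of $\gamma$ (Theorem \ref{TolsaTheorem}(2)) controls $\sum_Q\gamma(E\cap Q)$, an $L^1$-type quantity, and this does not translate into a sup-norm bound on $\bar\partial\Phi$; the ``regularization through $\CT\nu$'' is left unspecified and cannot repair a pointwise bound that is geometrically false. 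The comparison you draw with Lemma \ref{BasicEstimate2} is also off: there the Paramonov scheme assembles bounded analytic building blocks $g_{ij}$, $h_{ij}$ with controlled Laurent coefficients into an approximant, and the summability of the resulting series is delicate; it is not used, and could not be used, to manufacture a cutoff whose $\bar\partial$-derivative is uniformly bounded and avoids $E$. In short, the proof cannot go through a $\bar\partial$-cutoff that is literally analytic near $E$; one needs either the argument in \cite{ars02} itself, or at minimum a scheme that controls point evaluation directly (e.g.\ via area-measure/subharmonicity absorption estimates) rather than by forcing $\bar\partial\Phi$ to vanish near $E$.
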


The theorem below provides  an important connection between $\text{abpe}(R^t(K,\mu))$ and $\mathcal R.$ 

\begin{theorem}\label{ABPETheoremRT} 
The following property holds:
 \begin{eqnarray}\label{ABPETheoremRTEq1}
 \ \text{abpe}(R^t(K,\mu)) \approx \text{int}(K) \cap \mathcal R,~ \gamma-a.a..
 \end{eqnarray}
More precisely, the following statements are true:

(1) If $\lambda_0 \in \text{int}(K)$ and there exists $N\ge 1$ such that 
 \begin{eqnarray}\label{ABPETheoremRTEq2}
 \ \lim_{\delta \rightarrow 0} \dfrac{ \gamma (\mathcal E_N \cap \D(\lambda_0, \delta))}{\delta} = 0,
 \end{eqnarray}
then $\lambda_0\in \text{abpe}(R^t(K,\mu))$.

(2)
\[
 \  \text{abpe}(R^t(K,\mu)) \subset \text{int}(K) \cap \mathcal R,~ \gamma-a.a..
 \]
\end{theorem}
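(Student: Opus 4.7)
The equivalence $\text{abpe}(R^t(K,\mu)) \approx \text{int}(K) \cap \mathcal R$ up to $\gamma$-null sets follows from (1), (2), and Theorem \ref{FCharacterization} (which identifies $\mathcal R$ $\gamma$-a.e.\ with the set of $\lambda$ for which some $N$ makes the density of $\mathcal E_N$ vanish). The plan is therefore to prove (1) and (2) separately.

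For part (1), my plan is a standard ARS-style area argument anchored by the identity
\[
\ \mathcal C(fg_j\mu)(\lambda) = f(\lambda)\,\mathcal C(g_j\mu)(\lambda), \qquad f \in \text{Rat}(K),\ \lambda \in \text{int}(K),
\]
which follows because $(f(w)-f(\lambda))/(w-\lambda) \in \text{Rat}(K)$ and so annihilates $g_j\mu$. Choose $R$ small enough that $\D(\lambda_0,R) \subset \text{int}(K)$ and $\gamma(\mathcal E_N \cap \D(\lambda_0,R)) < \epsilon_1 R$. Since $f\in\text{Rat}(K)$ is analytic on $\D(\lambda_0,R)$, Lemma \ref{lemmaARS} (which only uses analyticity on the disk) yields
\[
\ |f(\lambda)| \le \frac{C_{13}}{\pi R^2}\int_{\D(\lambda_0,R)\setminus \mathcal E_N}|f|\,d\area, \qquad \lambda \in \D(\lambda_0,R/2).
\]
On the complement of $\mathcal E_N$ some $|\mathcal C(g_j\mu)(\lambda)| > 1/N$ with $1\le j\le N$, so the identity gives $|f(\lambda)| \le N \sum_{j=1}^N |\mathcal C(fg_j\mu)(\lambda)|$ there. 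Fubini together with $\int_{\D(\lambda_0,R)}|w-\lambda|^{-1}d\area(\lambda) \le 4\pi R$ bounds $\int_{\D(\lambda_0,R)}|\mathcal C(fg_j\mu)|\,d\area \le 4\pi R\,\|f\|_{L^t(\mu)}\|g_j\|_{L^s(\mu)}$ (the last step by H\"older). Combining yields a uniform estimate $|f(\lambda)| \le M\|f\|_{L^t(\mu)}$ on $\D(\lambda_0,R/2)$, so $\lambda_0 \in \text{abpe}(R^t(K,\mu))$.

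For part (2), the plan is a contradiction argument based on Lemma \ref{BBFRLambda} and Liouville's theorem. Since $\text{abpe} \subset \text{int}(K)$ by definition, one needs $\gamma(\text{abpe}\cap\mathcal F) = 0$. Suppose, for contradiction, that this fails; as $\text{abpe}$ is open, one can extract a compact $E_1 \subset \text{abpe}\cap\mathcal F$ with $\gamma(E_1) > 0$. Lemma \ref{BBFRLambda} produces $f\in R^{t,\infty}(K,\mu)$ and $\eta\in M_0^+(E_1)$ satisfying $f=\mathcal C(\eta)$ on $\C_\infty\setminus E_1$, $f(\infty)=0$, and $f'(\infty)=-\gamma(E_1)\ne 0$. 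Because $E_1 \subset \text{abpe}$, the evaluation $\rho(f)\in H^\infty(\text{abpe})$ is bounded and analytic. Once one shows $\rho(f)(\lambda) = \mathcal C(\eta)(\lambda)$ pointwise on $\text{abpe}\setminus E_1$, the recipe is to define $\tilde f$ by $\rho(f)$ on $\text{abpe}$ and by $\mathcal C(\eta)$ on $\C_\infty\setminus E_1$; these agree on the overlap, their union is $\C_\infty$ (as $E_1\subset\text{abpe}$), and $\tilde f$ is bounded and entire. Liouville forces $\tilde f\equiv \tilde f(\infty) = 0$, contradicting $\tilde f'(\infty) = -\gamma(E_1)\ne 0$.

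The main obstacle is the pointwise identification $\rho(f)=\mathcal C(\eta)$ on $\text{abpe}\setminus E_1$, since $f$ is only defined as an $L^\infty(\mu)$-class and $\mu$ may place no mass in $\text{abpe}\setminus E_1$ at all. The approach I would take is to use the construction in Lemma \ref{BBFunctLemma}: $f$ is the weak-$*$ limit of $\mathcal C_{\epsilon_k}(w\eta_1)$ in $L^\infty(\mu)$ (and the limit is uniform off $E_1$). Given the representing kernel $k_\lambda\in L^s(\mu)\subset L^1(\mu)$ at $\lambda\in\text{abpe}$, one writes $\rho(f)(\lambda) = \int f k_\lambda\,d\mu = \lim_k \int \mathcal C_{\epsilon_k}(w\eta_1)\,k_\lambda\,d\mu$ and swaps the iterated integral by Fubini to obtain $\rho(f)(\lambda) = -\int \mathcal C(k_\lambda\mu)(z)\,d(w\eta_1)(z)$. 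Since $\mathcal C(k_\lambda\mu)(z) = (\lambda-z)^{-1}$ for $z\notin K$, the identification reduces to extending this identity to $z\in E_1\subset \mathcal F$ via the $\gamma$-a.e.\ vanishing in Theorem \ref{FLambdaForW} applied (in a suitably generalized form) to the annihilating measure $k_\lambda\mu-\delta_\lambda$; a fallback is to instead pick $E_1$ inside an annulus $\{\delta/2 < |z-\lambda_0|<\delta\}$ and compare $(\mathcal C\eta)'(\lambda_0)\gtrsim \gamma(E_1)/\delta^2$ against the abpe derivative estimate $|\rho(f)'(\lambda_0)|\le M\|f\|_{L^t(\mu)}/r_0$ via Cauchy's formula, which forces a contradiction as $\delta\to 0$.
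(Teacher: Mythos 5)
Part (1) and the reduction of \eqref{ABPETheoremRTEq1} to (1), (2), and Theorem \ref{FCharacterization} match the paper's proof: same use of Lemma \ref{lemmaARS}, the identity $r(z)\mathcal C(g_j\mu)(z)=\mathcal C(rg_j\mu)(z)$ for $r\in\text{Rat}(K)$, then Fubini and H\"older. No issues there.

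For Part (2), your skeleton --- take compact $E_1\subset\text{abpe}(\rtkmu)\cap\mathcal F$ with $\gamma(E_1)>0$, invoke Lemma \ref{BBFRLambda}, show $\rho(f)=\mathcal C(\eta)$ off $E_1$, finish by Liouville --- is exactly the paper's plan, but the identification step, which you yourself flag as the main obstacle, is genuinely not closed by either route you sketch. Writing $g_\lambda=(z-\lambda)\overline{k_\lambda}\in R^t(K,\mu)^\perp$ and $\mathcal C(g_\lambda\mu)(z)=1+(z-\lambda)\mathcal C(\overline{k_\lambda}\mu)(z)$, your Fubini route reduces to showing $\int (z-\lambda)^{-1}\mathcal C(g_\lambda\mu)(z)\,d\eta(z)=0$. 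When $E_1\subset\mathcal F_+$ (or $\mathcal F_-$), the measure $\eta$ sits on a Lipschitz graph where Theorem \ref{FLambdaForW} gives only $v^\pm(g_\lambda\mu,\Gamma_n,\beta_n)=0$, not $\mathcal C(g_\lambda\mu)=0$ pointwise $\gamma$-a.e., so the integral does not vanish for free; one would have to redo the jump-cancellation ($f_2$ construction) from the proof of Lemma \ref{BBFRLambda} with $g_\lambda$ in place of the $g_j$'s, and additionally justify the Fubini swap for $k_\lambda\notin\Lambda$, since the dominated-convergence step in Lemma \ref{BBFunctLemma} relied on having pre-arranged $\mathcal C_*(g_j\mu)$ to be bounded on $E_1$. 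The derivative-estimate fallback still presupposes the identification $\rho(f)'(\lambda_0)=(\mathcal C\eta)'(\lambda_0)$, so it does not help either.

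The observation you are missing is that the proof of Lemma \ref{BBFRLambda} (via \eqref{BBFunctLemmaEq2} and \eqref{BBFGEq4}) in fact produces $\dfrac{f(z)-f(\lambda)}{z-\lambda}\in R^{t,\i}(K,\mu)$ for every $\lambda\in E_1^c$, and this is what the paper uses. Since $R^{t,\i}(K,\mu)\subset\rtkmu$, pairing this element against the single annihilator $g_\lambda=(z-\lambda)\overline{k_\lambda}$ (for $\lambda$ in the relevant component $G$ with $\mu(\{\lambda\})=0$, then removing atoms by continuity) gives
$0=\int\frac{f(z)-f(\lambda)}{z-\lambda}g_\lambda\,d\mu=(f,k_\lambda)-f(\lambda)$, i.e.\ $\rho(f)(\lambda)=f(\lambda)=\mathcal C(\eta)(\lambda)$, in one line and with no case analysis on $\mathcal F_0,\mathcal F_\pm$. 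Until that step is supplied, Part (2) of your proof has a gap at precisely the point the whole argument hinges on.
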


\begin{proof}
(1): If $\lambda_0 \in \text{int}(K)$  satisfies \eqref{ABPETheoremRTEq2}, then we  choose $\delta > 0$ small enough such that $\D(\lambda_0, \delta) \subset int(K)$ and 
$\gamma (E:= \mathcal E_N \cap \D(\lambda_0, \delta)))\le \epsilon_1 \delta$, where $\epsilon_1$ is from Lemma \ref{lemmaARS}. Hence, using Lemma \ref{lemmaARS}, we conclude
 \[
 \ \begin{aligned}
 \ |r(\lambda )| \le & \dfrac{C_{13}}{\pi \delta^2} \int _{\D(\lambda_0, \delta) \setminus E} |r(z)| d\area(z) \\
 \ \le & \dfrac{NC_{13}}{\pi \delta^2} \int _{\D(\lambda_0, \delta)} |r(z)| \max_{1\le j \le N} |\mathcal C(g_j\mu)(z)| d\area(z) \\
\ \le & \dfrac{NC_{13}}{\pi \delta^2} \sum_{j = 1}^N\int _{\D(\lambda_0, \delta)}  |\mathcal C(rg_j\mu)(z)| d\area(z) \\
\ \le & \dfrac{NC_{13}}{\pi \delta^2} \sum_{j = 1}^N\int \int _{\D(\lambda_0, \delta)} \left |\dfrac{1}{z-w} \right | d\area(z) |r(w)||g_j (w)| d\mu (w) \\
\ \le & \dfrac{NC_{14}}{\delta} \sum_{j = 1}^N \|g_j\|_{L^{s}(\mu)} \|r\|_{L^{t}(\mu)}
 \ \end{aligned}
 \]
for all $\lambda$ in $\D(\lambda_0, \frac{\delta}{2})$ and all $r\in Rat(K)$. This implies that $\lambda_0\in abpe(R^t(K,\mu))$.

(2): Let $E \subset G\cap \mathcal F$ be a compact subset with $\gamma(E) > 0$, where $G$ is a connected component of $abpe(R^t(K,\mu))$. By Lemma \ref{BBFRLambda}, there exists $f\in R^{t,\i}(K,\mu)$ that is bounded analytic on $E^c$ such that  
 $\|f\|_{L^\infty(\mu)} \le C_{12},~ f(\infty) = 0,~ f'(\infty) = \gamma(E),$
and 
$\dfrac{f(z) - f(\lambda)}{z - \lambda} \in R^{t,\i}(K,\mu)$ for $\lambda \in E^c.$
Let $r_n\in Rat(K)$ such that $\|r_n-f\|_{L^t(\mu)} \rightarrow 0$. Hence, $r_n$ uniformly tends to an analytic function $f_0$ on compact subsets of $G$ and $\frac{f-f_0(\lambda)}{z-\lambda} \in R^t(K,\mu)$ for $\lambda\in G$. 
For $\lambda\in G\setminus E$ with $\mu(\{\lambda\})=0,$ let $k_\lambda \in L^s(\mu)$ such that $r(\lambda) = (r,k_\lambda)$ for $r\in \text{Rat}(K).$ Hence, $f_0(\lambda)= (f,k_\lambda).$ Set $g= (z-\lambda)\bar k_\lambda.$ Then $g \perp \rtkmu$ and
\[
\ 0 = \int \dfrac{f(z) - f(\lambda)}{z - \lambda} gd\mu = (f - f(\lambda),k_\lambda) = (f,k_\lambda) - f(\lambda).
\]
Therefore, $f(z) = f_0(z)$ for $z\in G\setminus E.$  Thus, the function $f_0(z)$ can be analytically extended to $\mathbb C_\i$ and $f_0(\infty) = 0$. So $f_0=0.$ This contradicts to $f'(\infty) \ne 0$. 

\eqref{ABPETheoremRTEq1} now follows from Theorem \ref{FCharacterization}.
\end{proof}

\section{\textbf{Proof of Theorem A and Theorem B}}

For  a compact subset $E \subset \D(a, \delta)$ and an analytic function $f$ on $\C_\i \setminus E$ with $f(\infty) = 0,$ we consider the Laurent expansion of $f$ centered at $a$ for $z\in \mathbb C\setminus \D(a, \delta),$

\[
 \ f(z) = \sum_{n=1}^\infty \dfrac{c_n(f,a)}{(z-a)^n}.
 \]
 As $c_1(f, a)$ does not depend on the choice of $a$,  we define 
 $c_1 (f) = c_1(f, a).$
 
 Let $\varphi$ be a smooth function with compact support. Vitushkin's localization operator
$T_\varphi$ is defined by
 \[
 \ (T_\varphi f)(\lambda) = \dfrac{1}{\pi}\int \dfrac{f(z) - f(\lambda)}{z - \lambda} \bar\partial \varphi (z) d\area(z),
 \]
where $f\in L^1_{loc} (\C)$.	 Clearly, $(T_\varphi f)(z) = - \frac{1}{\pi}\mathcal C(\varphi \bar\partial f\area ) (z).$
Therefore, by \eqref{CTDistributionEq}, $T_\varphi f$ is analytic outside of $\text{supp} (\bar \partial f) \cap\text{supp} (\varphi).$ If $\text{supp} (\varphi) \subset \D(a,\delta),$ then  
\[
 \ \| T_\varphi f\|_\i   \le  4\|f\|_\i  \delta\|\bar\partial \varphi\|.
 \]	
 See \cite[VIII.7.1]{gamelin} for the details of $T_\varphi.$

Let $\delta > 0$. We say that 
$\{\varphi_{ij},S_{ij}, \delta\}$ is a smooth partition of unity subordinated to $\{2S_{ij}\},$ 
 if $S_{ij}$ is a square with vertices   $(i\delta,j\delta),~((i+1)\delta,j\delta),~(i\delta,(j+1)\delta),$ and $((i+1)\delta,(j+1)\delta);$
$\varphi_{ij}$ is a $C^\infty$ smooth function supported in $2S_{ij},$
and with values in $[0,1]$; and
 \[
 \ \|\bar\partial \varphi_{ij} \| \le \frac{C_{15}}{\delta},~ \sum \varphi_{ij} = 1.
 \]
We shall let $c_{ij}$ denote the center of $S_{ij}$ (See \cite[VIII.7]{gamelin} for details).

Let $f\in C(\C_\i)$ (or $f\in L^\i (\C)$) with compact support.  Define $f_{ij} = T_{\varphi_{ij}}f,$ then $f_{ij} \ne 0$ for only finite many $(i,j).$ Clearly,
$f(z) = \sum_{f_{ij}\ne 0}f_{ij}(z).$
The standard Vitushkin approximation scheme requires us to construct functions $a_{ij}$ such that $f_{ij} - a_{ij}$ has triple zeros at $\infty$, which requires us to estimate both $c_1 (a_{ij})$ and $c_2(a_{ij}, c_{ij})$ (see \cite[section 7 on page 209]{gamelin}). 
The main idea of P. V. Paramonov \cite{p95} is that one does not actually need to estimate
each coefficient $c_2(a_{ij}, c_{ij})$. It suffices to estimate 
the sum of coefficients $\sum_{j\in J_{il}} c_2(a_{ij}, c_{ij})$ for a special non-intersecting partition $\{J_{il}\}.$ Let 
\begin{eqnarray}\label{ThetaIJ}
\ \alpha_{ij} = \gamma (\D(c_{ij}, k_1\delta) \cap E),
\end{eqnarray} 
where $k_1\ge 3$ is a fixed integer.
Let $m_1 =  \min\{i,j,~  \D(c_{ij}, k_1\delta) \cap E \ne \emptyset\}$ and
$m_2 = \max\{i,j,~  \D(c_{ij}, k_1\delta) \cap E \ne \emptyset\}.$
So $m_1$ and $m_2$ are finite.
Set $min_i = \min\{j:~  \alpha_{ij}\ne 0 \}$ and $max_i = \max\{j:~ \alpha_{ij}\ne 0 \}$. Let $J_i = \{j:~ min_i \le j \le max_i\}$. 

\begin{definition}
We call a subset $J$ of $J_i$ a  complete group if 
$J = \{j:~ j_1 + 1\le j \le j_1 + s_1 + s_2+s_3 \},$
where $s_2$ is an absolute constant that will be chosen in Lemma \ref{BasicEstimate3},
and $s_1$ and $s_3$ are chosen so that
 \[
 \ \delta \le \sum_{j=j_1+1}^{j_1+s_1}\alpha_{ij} < \delta + k_1\delta
\text{ and } \delta \le \sum_{j=j_1+s_1+s_2+1}^{j_1+s_1+s_2+s_3}\alpha_{ij} < \delta + k_1\delta.
 \]
 \end{definition}

 We now present a detailed description of the procedure of partitioning $J_i$ into
groups. We split each $J_i$ into (finitely many) non-intersecting groups $J_{il}$, $l = 1,...,l_i$, as follows.
Starting from the lowest index $min_i$ in $J_i$ we include in $J_{i1}$ (going upwards and without jumps in $J_i$) all indices until we have collected a minimal (with respect to the number of elements) complete group $J_{i1}$. Then we repeat this
procedure for $J_i\setminus J_{i1}$, and so on. After we have constructed all the complete
groups $J_{i1},...,J_{il_i-1}$ in this way  (there may be none), then  what remains is the last
portion $J_{il_i} = J_i\setminus(J_{i1}\cup...\cup J_{il_i-1})$ of indices in $J_i$, which includes no complete
groups. We call this portion $J_{il_i}$ an incomplete group of indices (clearly, there is at most one incomplete group for each $i$).

\begin{definition}\label{MVSPDef}
Let $\delta > 0,$ $\alpha_{ij},$ and $E$ be defined as in \eqref{ThetaIJ}. 
Let $P_1$ be the set of all complete groups $J_{il}$ for $1\le l \le l_i -1$ and  $m_1 \le i \le m_2.$ Let $P_2$ be the set of all incomplete groups $J_{il_i}$ for $m_1 \le i \le m_2.$
 Set $P = P_1 \cup P_2.$
\end{definition}

For a group $J$ (complete or incomplete) with row index $i,$ 
let 
\begin{eqnarray}\label{LIDef0}
\ J'(z) = \{j\in J:~\alpha_{ij} > 0 \text{ and } |z-c_{ij}| >3k_1\delta  \},
\end{eqnarray}
and
 \begin{eqnarray}\label{LIDef}
 \ L_J' (z) = \sum_{j\in J'(z)} \left ( \dfrac{\delta\alpha_{ij}}{|z - c_{ij}|^2} + \dfrac{\delta^3}{|z - c_{ij}|^3} \right ).
 \end{eqnarray}
Define $L_J(z) = L_J' (z)$ if $\{j\in J:~\alpha_{ij} > 0\} = J'(z)$, otherwise, $L_J(z) = 1+L_J' (z)$. For $g_{ij}$ that is bounded and analytic on $\mathbb C\setminus E_{ij},$ where $E_{ij}$ is a compact subset of $\D(c_{ij}, k_1\delta),$ define
 \begin{eqnarray}\label{GIDefinition}
 \ g_J = \sum_{j\in J} g_{ij}, ~ c_1(g_J) = \sum_{j\in J} c_1(g_{ij}),~c_2 (g_J) = \sum_{j\in J} c_2 (g_{ij},c_{ij}).
 \end{eqnarray}

\begin{definition}\label{GApplicable}
A sequence of functions 	$\{g_{ij}\}$ is applicable to $P$ and $\mu \in M_0^+(\C)$ if the following assumptions hold, 
for some absolute constant $C_{16}$:

(1) $g_{ij}$ is bounded and analytic on $\mathbb C_\i \setminus \overline{\D(c_{ij}, k_1\delta)};$ 

(2) $g_{ij}(\infty) = c_1(g_{ij}) = 0;$ 

(3) $\|g_{ij}\|_{L^\infty(\mu)} \le C_{16}$;

(4) $|c_n(g_{ij}, c_{ij})| \le C_{16} \delta^{n-1} \alpha_{ij}$ for $n \ge 1.$
\end{definition}

The following lemma is straightforward. 

\begin{lemma} \label{LEProp}
If $f(z)$ is bounded analytic on $\mathbb C_\i \setminus \overline{\D(a, \delta)}$ with $f(\i) = 0,$ $\|f\| \le 1,$ $E \subset \overline{\D(a, \delta)},$ and
\[
 \ |c_n(f,a)| \le C_{17} n \delta ^{n-1}\gamma(E).
 \]
 Then for  $|z - a| > 2\delta,$ 
 \[
 \ \left |f(z) - \dfrac{c_1(f)}{z-a}  - \dfrac{c_2(f,a)}{(z-a)^2} \right | \le \dfrac{C_{18}\gamma(E)\delta^2}{|z-a|^3}.
 \]
 \end{lemma}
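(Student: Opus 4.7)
The plan is to recognize the left-hand side as the tail of the Laurent expansion of $f$ centered at $a$. Since $f$ is bounded analytic on $\C_\i \setminus \overline{\D(a,\delta)}$ with $f(\i) = 0$, the series
\[
f(z) = \sum_{n=1}^\infty \frac{c_n(f,a)}{(z-a)^n}
\]
converges absolutely for $|z-a| > \delta$, so the expression to be bounded is precisely
\[
f(z) - \frac{c_1(f)}{z-a} - \frac{c_2(f,a)}{(z-a)^2} = \sum_{n=3}^\infty \frac{c_n(f,a)}{(z-a)^n}.
\]
The next step is to apply the hypothesis $|c_n(f,a)| \le C_{17}\, n\, \delta^{n-1}\gamma(E)$ term by term inside the sum and factor out the common scale $\gamma(E)\delta^2/|z-a|^3$.

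Setting $r := \delta/|z-a|$, the assumption $|z-a| > 2\delta$ forces $r < 1/2$, so the remaining factor becomes
\[
\sum_{n=3}^\infty n r^{n-3} = \sum_{k=0}^\infty (k+3) r^k,
\]
which is dominated by an absolute constant on the range $r \le 1/2$ (either by direct comparison with a geometric series plus its derivative, or by noting $\sum_{k\ge 0}(k+3)(1/2)^k < \infty$). Collecting $C_{17}$ and this geometric constant into a single $C_{18}$ yields the claimed inequality. There is no substantive obstacle here: the argument is a routine Laurent tail estimate, and the gap $|z-a| > 2\delta$ is present precisely to guarantee geometric decay with a constant independent of $z$.
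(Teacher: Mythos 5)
Your argument is correct and is exactly the routine Laurent tail computation the paper has in mind; the paper gives no proof, merely remarking that the lemma is straightforward. Bounding $\sum_{n\ge 3} n\,r^{n-3}$ for $r = \delta/|z-a| \le 1/2$ by the absolute constant $\frac{r}{(1-r)^2}+\frac{3}{1-r} \le 8$ and absorbing it together with $C_{17}$ into $C_{18}$ completes the estimate as you describe.
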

 Therefore, by Lemma \ref{LEProp}, if $\{g_{ij}\}$ is applicable to $P,$  then for $|z-c_{ij}| >3k_1\delta,$
 \begin{eqnarray}\label{gijEst}
 \ |g_{ij}(z)| \le  \dfrac{C_{19}\delta \alpha_{ij} }{|z-c_{ij}|^2} + \dfrac{C_{19} \delta^3 }{|z-c_{ij}|^3}.
 \end{eqnarray}

The following lemma follows easily from  Definitions \ref{MVSPDef} and \eqref{gijEst}.

\begin{lemma}\label{BasicEstimate}
Let $\{g_{ij}\}$ be applicable to $P$ and $\mu \in M_0^+(\C)$. If $J$ is a group (complete or incomplete) with row index $i,$
 then 
 \[
 \begin{aligned}
 \ & |g_J(z)| \le C_{20} L_J(z), ~ \|g_J\|(\|g_J\|_{L^\infty(\mu)}) \le C_{20},     \\ 
 \ & c_1(g_J) = 0,~|c_2(g_J)| \le C_{20} \delta^2.
 \end{aligned}
 \]
 \end{lemma}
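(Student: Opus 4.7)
The lemma is essentially a bookkeeping result combining the individual bounds of Definition \ref{GApplicable} with the combinatorial structure of the (in)complete groups in Definition \ref{MVSPDef}. The plan is to handle the four assertions in order: the vanishing $c_1(g_J)=0$; the estimate $|c_2(g_J)|\le C_{20}\delta^2$; the pointwise bound $|g_J(z)|\le C_{20}L_J(z)$; and finally the supremum and $L^\infty(\mu)$ bounds.

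The identity $c_1(g_J)=0$ is immediate from \eqref{GIDefinition} together with Definition \ref{GApplicable}(2), since every $c_1(g_{ij})$ vanishes. For $|c_2(g_J)|\le C_{20}\delta^2$, the triangle inequality and the bound $|c_2(g_{ij},c_{ij})|\le C_{16}\delta\alpha_{ij}$ from Definition \ref{GApplicable}(4) reduce matters to showing $\sum_{j\in J}\alpha_{ij}\le C\delta$. For a complete group this is immediate from Definition \ref{MVSPDef}: the two outer portions contribute at most $2(1+k_1)\delta$, while the middle block of $s_2$ indices satisfies $\alpha_{ij}\le\gamma(\D(c_{ij},k_1\delta))\le k_1\delta$ for each of its $s_2$ (absolute-constant many) terms. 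For an incomplete group, the extraction procedure defining $P_2$ guarantees that no complete sub-group can be closed, so the accumulated $\alpha$-mass is again at most a constant multiple of $\delta$.

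For the pointwise bound I split $J$ into the ``far'' set $J'(z)$ of \eqref{LIDef0} and its complement. Indices $j\in J\setminus J'(z)$ with $\alpha_{ij}=0$ contribute nothing, since Definition \ref{GApplicable}(2),(4) force every Laurent coefficient of $g_{ij}$ at $c_{ij}$ to vanish, so that $g_{ij}\equiv 0$ on $\C_\i\setminus\overline{\D(c_{ij},k_1\delta)}$. The remaining indices in $J\setminus J'(z)$ have $\alpha_{ij}>0$ and $|z-c_{ij}|\le 3k_1\delta$; since the $c_{ij}$ sit on row $i$ with horizontal spacing $\delta$, their number is at most an absolute constant of order $k_1$, and each contributes at most $C_{16}$ by Definition \ref{GApplicable}(3). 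Crucially, the very presence of such an index forces $L_J(z)=1+L_J'(z)\ge 1$, which absorbs this constant. For $j\in J'(z)$, the estimate \eqref{gijEst} gives $|g_{ij}(z)|\le C_{19}\bigl(\delta\alpha_{ij}/|z-c_{ij}|^2+\delta^3/|z-c_{ij}|^3\bigr)$, and summing over $J'(z)$ yields $C_{19}L_J'(z)$ by the very definition \eqref{LIDef}. Combining the two contributions produces $|g_J(z)|\le C_{20}L_J(z)$.

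For the uniform norm bounds it suffices to show $L_J(z)\le C$ independently of $z$. Since the $c_{ij}$ lie on a horizontal row of spacing $\delta$ and one restricts to $|z-c_{ij}|>3k_1\delta$, comparing the sum with the integral $\int\delta^3(x^2+d^2)^{-3/2}\,dx$ gives $\sum_{J'(z)}\delta^3/|z-c_{ij}|^3\le C$, and using $\alpha_{ij}\le k_1\delta$ with the analogous comparison for $\int(x^2+d^2)^{-1}\,dx$ gives $\sum_{J'(z)}\delta\alpha_{ij}/|z-c_{ij}|^2\le C$ as well. Hence $L_J(z)\le C$ uniformly and the pointwise bound in (c) produces both $\|g_J\|\le C_{20}$ and $\|g_J\|_{L^\infty(\mu)}\le C_{20}$. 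The only step that really requires attention is the bound $\sum_{j\in J}\alpha_{ij}\le C\delta$ for incomplete groups, which leans squarely on the extraction algorithm defining $P_2$; the remainder is routine Vitushkin-style estimation on a square grid.
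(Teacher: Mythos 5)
Your proof is correct and fills in exactly the kind of routine verification that the paper omits (it only remarks that the lemma ``follows easily from Definitions \ref{MVSPDef} and \eqref{gijEst}''). The four claims are handled in the expected way: $c_1(g_J)=0$ is trivial from \ref{GApplicable}(2); the $c_2$ bound reduces to $\sum_{j\in J}\alpha_{ij}\lesssim\delta$, which for complete groups is read off from the two defining inequalities plus $\alpha_{ij}\le k_1\delta$ on the fixed middle block, and for incomplete groups from the fact that the extraction stops precisely because one of the two outer sums cannot be brought up to $\delta$; the pointwise bound splits $J$ along $J'(z)$ and uses the $L^\infty$ control on the $O(k_1)$ near indices (absorbed by the ``$1+$'' in $L_J$) and \eqref{gijEst} on the far ones; and the uniform bound follows from a grid comparison showing $L_J'(z)\le C$.

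The only place worth tightening is the incomplete-group mass bound, which you describe a bit vaguely: to be explicit, after the last complete group one of three things happens — the residual mass never reaches $\delta$ (so total $<\delta$); it reaches $\delta$ after $s_1$ steps with partial sum $<(1+k_1)\delta$ but fewer than $s_2$ indices remain (adding at most $(s_2-1)k_1\delta$); or $s_1+s_2$ indices exist but the remainder has mass $<\delta$. In every branch $\sum\alpha_{ij}\le(2+(s_2+1)k_1)\delta$, which is the constant you need. With that spelled out, the argument is complete.
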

 
 The following key lemma  is due to \cite[Lemma 2.7]{p95}, where $s_2$ is also selected.
	
	\begin{lemma}\label{BasicEstimate3}
Let $\{g_{ij}\}$ be applicable to $P$ and $\mu \in M_0^+(\C).$  Suppose
that there exists $h_{ij} \in L^\infty (\mu)$ that is bounded and
 analytic on $\mathbb C_\i\setminus \overline{\D(c_{ij},k_1\delta)}$ 
 and satisfy
 \[
 \begin{aligned}
 \ & h_{ij}(\infty) = 0,~\|h_{ij}\| \le C_{21}, ~ c_1(h_{ij}) = \alpha_{ij}, \\
 \ & |c_n(h_{ij},c_{ij})| \le C_{21} \delta ^{n-1}\alpha_{ij} \text{ for } n \ge 1.
 \end{aligned}
 \]
Then for each complete group $J_{il}\in P_1$, there exists a function
$h_{J_{il}}$ that is a linear combination of $h_{ij}$ for $j\in J_{il}^d \cup J_{il}^u,$  where $J_{il}^d = (j_1+1,...,j_1+s_1)$ and $J_{il}^u = (j_1+s_1+s_2+1,...,j_1+s_1+s_2+s_3)$, such that for all $z \in \mathbb C,$
 \begin{eqnarray}\label{Eq2.25}
 \begin{aligned}
 \ &|h_{J_{il}}(z)| \le C_{22}  L_{J_{il}}(z),  ~ \|h_{J_{il}}\| \le C_{22}, \\
 \  &c_1 (h_{J_{il}}) = 0, ~ c_2 (h_{J_{il}}) = c_2 (g_{J_{il}}).
 \end{aligned}
 \end{eqnarray}
\end{lemma}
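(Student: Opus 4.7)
The plan is to build $h_{J_{il}}$ as a scalar multiple of the difference of two $\alpha$-normalized sums, one over the lower sub-block $J_{il}^d$ and one over the upper sub-block $J_{il}^u$. The normalization will kill $c_1$ automatically, the mandatory gap of $s_2$ rows between the blocks will make the resulting $c_2$-denominator bounded below (this is the step at which the absolute constant $s_2$ gets chosen), and the overall scalar factor is then tuned so that $c_2(h_{J_{il}})$ matches $c_2(g_{J_{il}})$.

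Concretely, put $\alpha^d:=\sum_{j\in J_{il}^d}\alpha_{ij}$, $\alpha^u:=\sum_{j\in J_{il}^u}\alpha_{ij}$, and
\[
\bar h_d:=\frac{1}{\alpha^d}\sum_{j\in J_{il}^d}h_{ij},\qquad \bar h_u:=\frac{1}{\alpha^u}\sum_{j\in J_{il}^u}h_{ij},\qquad h_1:=\bar h_d-\bar h_u.
\]
Each of $\bar h_d,\bar h_u$ has $c_1=1$, hence $c_1(h_1)=0$ and $c_2(h_1,c)$ is independent of the reference point $c$. Applying the shift identity $c_2(f,c)=c_2(f,a)+c_1(f)(a-c)$ to each $h_{ij}$ and assembling,
\[
c_2(h_1)=\frac{\tau^d}{\alpha^d}-\frac{\tau^u}{\alpha^u}+(\bar c^d-\bar c^u),
\]
where $\tau^{d/u}:=\sum c_2(h_{ij},c_{ij})$ and $\bar c^{d/u}:=(1/\alpha^{d/u})\sum\alpha_{ij}c_{ij}$ over the respective sub-blocks. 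The hypothesis $|c_2(h_{ij},c_{ij})|\le C_{21}\delta\alpha_{ij}$ gives $|\tau^d/\alpha^d|,|\tau^u/\alpha^u|\le C_{21}\delta$, while the row geometry forces $\bar c^u-\bar c^d$ to be purely imaginary with $|\mathrm{Im}(\bar c^u-\bar c^d)|\ge(s_2+1)\delta$. Taking $s_2$ to be any integer $\ge 3C_{21}+1$ (absolute) yields $|c_2(h_1)|\ge|\mathrm{Im}\,c_2(h_1)|\ge C_{21}\delta$. We then set $A:=c_2(g_{J_{il}})/c_2(h_1)$ and $h_{J_{il}}:=A\,h_1$; by construction $c_1(h_{J_{il}})=0$ and $c_2(h_{J_{il}})=c_2(g_{J_{il}})$, and Lemma \ref{BasicEstimate} supplies $|c_2(g_{J_{il}})|\le C_{20}\delta^2$, so $|A|\le C\delta$.

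It then remains to verify the pointwise bound $|h_{J_{il}}(z)|\le C_{22}L_{J_{il}}(z)$, from which the uniform bound $\|h_{J_{il}}\|\le C_{22}$ follows because $L_{J_{il}}(z)$ is itself controlled by an absolute constant uniformly in $z$ (the centers $c_{ij}$ for fixed $i$ lie on a vertical line with spacing $\delta$, so that $\sum_j\delta^3/|z-c_{ij}|^3$ and, using $\alpha_{ij}\le k_1\delta$, $\sum_j\delta\alpha_{ij}/|z-c_{ij}|^2$ are each dominated by an absolute multiple of $\sum 1/n^2$ or $\sum 1/n^3$). For $z$ in the far field $|z-c_{ij}|>3k_1\delta$ for every $j\in J_{il}^d\cup J_{il}^u$, expand each $h_{ij}$ via Lemma \ref{LEProp}:
\[
h_{ij}(z)=\frac{\alpha_{ij}}{z-c_{ij}}+\frac{c_2(h_{ij},c_{ij})}{(z-c_{ij})^2}+R_{ij}(z),\qquad |R_{ij}(z)|\le\frac{C\alpha_{ij}\delta^2}{|z-c_{ij}|^3}.
\]
Using $\sum_{j\in J^d}\alpha_{ij}(c_{ij}-\bar c^d)=0$ together with a further Taylor expansion of $1/(z-c_{ij})$ around $\bar c^d$, the first-order part of $\bar h_d$ condenses to $1/(z-\bar c^d)$ up to corrections of the size of the summands in $L_{J_{il}}'$, and analogously for $\bar h_u$. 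The leading between-block difference
\[
\frac{1}{z-\bar c^d}-\frac{1}{z-\bar c^u}=\frac{\bar c^d-\bar c^u}{(z-\bar c^d)(z-\bar c^u)}
\]
after multiplication by $|A|=O(\delta)$ matches the $\sum\delta\alpha_{ij}/|z-c_{ij}|^2$ part of $L_{J_{il}}'$, while the remainders $R_{ij}$ (scaled by $|A|/\alpha^{d/u}\le C$) supply the $\sum\delta^3/|z-c_{ij}|^3$ part. For $z$ within $3k_1\delta$ of some $c_{ij_0}$, only $O(k_1)$ indices have their disk touching $z$; for those the uniform bound $|h_{ij}|\le C_{21}$ together with $|A|/\alpha^{d/u}\le C$ produces an $O(1)$ contribution absorbed into the additive ``$+1$'' in $L_{J_{il}}(z)$, while the remaining indices are handled by the far-field case.

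The principal obstacle is the careful far-field bookkeeping: simultaneously executing the two cancellations (the intra-block concentration of $\bar h_d$ to $1/(z-\bar c^d)$ via $\sum\alpha_{ij}(c_{ij}-\bar c^d)=0$, and the inter-block cancellation $\bar h_d-\bar h_u$) while keeping all constants absolute (independent of $\delta$, $s_1$, $s_3$, and the distribution of the $\alpha_{ij}$'s) and ensuring that each residual term fits one of the two summand shapes in $L_{J_{il}}'$. A separate, routine direct estimate is required on the annulus $k_1\delta<|z-c_{ij}|\le 3k_1\delta$, where Lemma \ref{LEProp} is unavailable and one invokes the maximum modulus bound $\|h_{ij}\|\le C_{21}$ together with the fact that only finitely many indices affect any given $z$.
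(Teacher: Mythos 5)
The paper itself does not prove this lemma; it simply cites \cite[Lemma 2.7]{p95}. Your attempt to supply an actual proof picks the right candidate $h_{J_{il}}=A(\bar h_d-\bar h_u)$ and correctly handles $c_1,c_2$, the choice of $s_2$, and the scaling $|A|\le C\delta$ — these steps are sound and are, as far as I can tell, the same normalization Paramonov uses. The problem is in the far-field verification of $|h_{J_{il}}(z)|\le C_{22}L_{J_{il}}(z)$.

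Concretely, your far-field estimate rests on splitting
\[
\bar h_d-\bar h_u=\Bigl(\bar h_d-\tfrac{1}{z-\bar c^d}\Bigr)-\Bigl(\bar h_u-\tfrac{1}{z-\bar c^u}\Bigr)+\Bigl(\tfrac{1}{z-\bar c^d}-\tfrac{1}{z-\bar c^u}\Bigr)
\]
and then bounding each piece separately by $C\,L_{J_{il}}'(z)/|A|$. That fails: the "intra-block" correction $A(\bar h_d-\tfrac{1}{z-\bar c^d})$ and the "inter-block" difference $A(\tfrac{1}{z-\bar c^d}-\tfrac{1}{z-\bar c^u})$ can each be much larger than $L_{J_{il}}'(z)$, and the needed bound holds only because they cancel. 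Take a complete group in which $J^d_{il}$ has exactly two indices with positive $\alpha$, each equal to $\alpha^d/2\in[\delta/2,\delta)$, at heights $\bar y^d\pm L$ (so $\bar c^d$ is the midpoint, with $s_1-1\approx 2L/\delta$, and the group is still minimal), and $J^u_{il}$ a single index at height $\bar y^d+\ell$ with $\ell=L+(s_2+1)\delta$. For $z$ on the horizontal line through $\bar c^d$ at distance $R$ from $\bar c^d$ with $3k_1\delta<R\ll L$, one computes $\bar h_d(z)\approx R/L^2$, $1/(z-\bar c^d)=1/R$, $|z-\bar c^u|\approx\ell$, and $|A|\approx\delta^2/\ell$. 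Hence each of your two pieces is $\approx\delta^2/(LR)$, whereas $L_{J_{il}}'(z)\approx\delta^2/L^2$ (and the true quantity $|A||\bar h_d-\bar h_u|\approx\delta^2/\ell^2\approx\delta^2/L^2$ is fine). The ratio $L/R$ is unbounded. The statement "the first-order part of $\bar h_d$ condenses to $1/(z-\bar c^d)$ up to corrections of the size of the summands in $L'_{J_{il}}$" is therefore false: the remainder in the centred Taylor expansion involves $\sum\sigma^d_j(c_{ij}-\bar c^d)^2$, and the spread $|c_{ij}-\bar c^d|$ has no absolute bound in terms of $\delta$ (minimality of the group does not prevent it from being very elongated, as above). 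A correct proof must keep $\bar h_d-\bar h_u$ together — for example, via a summation-by-parts / telescoping estimate that exploits the vanishing of the cumulative signed mass at both ends and keeps track of how $|A|\le C\delta^2/|\bar c^u-\bar c^d|$ interacts with the spread of each block — rather than isolating the centroid terms. This is the content of Paramonov's argument, which the paper invokes without reproducing.
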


The following lemma is also from \cite{p95} with slight modifications.

\begin{lemma}\label{BasicEstimate2}
Let $\{g_{ij}\}$ be applicable to $P$ and $\mu \in M_0^+(\C).$  Let $\{h_{ij}\}$ satisfy the assumption in Lemma \ref{BasicEstimate3}. 
Suppose that for each $J_{il}\in P_1,$ $h_{J_{il}}$  is constructed as in Lemma \ref{BasicEstimate3}.
 Set $\Psi_{J_{il}}(z) = g_{J_{il}}(z) - h_{J_{il}}(z).$
 Then 
 \begin{eqnarray}\label{FBounded3}
\ \sum_{J_{il}\in P_1} |\Psi_{J_{il}}(z)| + \sum_{J_{il_i}\in P_2} |g_{J_{il_i}}(z)| \le C_{23}\min \left (1, \dfrac{\delta}{dist(z, E)} \right )^\frac15. 
\end{eqnarray}
 \end{lemma}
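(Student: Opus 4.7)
The plan is to fix $z$ with $d=\mathrm{dist}(z,E)$, introduce a cutoff scale $t\in[\delta,d]$ to be optimized later, and in both sums appearing on the left side of \eqref{FBounded3} split the groups into those whose active disks meet $\overline{\D(z,t)}$ (``near'') and the rest (``far''). For every group $J$ the pointwise bound $|g_J|,\,|h_{J_{il}}|\le C L_J$ supplied by Lemmas \ref{BasicEstimate} and \ref{BasicEstimate3} gives in particular $|\Psi_{J_{il}}(z)|\le C L_{J_{il}}(z)$, which will be my working estimate for the near groups. For the far complete groups I will use instead a sharper decay coming from the vanishing of the first two Laurent coefficients of $\Psi_{J_{il}}$: since $c_1(g_{ij})=0$ by Definition \ref{GApplicable} and $c_2(h_{J_{il}})=c_2(g_{J_{il}})$ by Lemma \ref{BasicEstimate3}, we have $c_1(\Psi_{J_{il}})=c_2(\Psi_{J_{il}})=0$, so a Laurent expansion about the common vertical center $\bar c_{il}$ of the active disks of $J_{il}$, combined with the uniform bound $\|\Psi_{J_{il}}\|_{\infty}\le C$, yields a far-field estimate of order $\bigl(\mathrm{diam}(J_{il})\bigr)^{3}/|z-\bar c_{il}|^{3}$, in the spirit of Lemma \ref{LEProp}.

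The heart of the argument is the discrete-to-integral passage that converts each of the resulting partial sums into a Riesz-type integral against the ``mass'' $\sum\alpha_{ij}$. Here I will rely on the structural features guaranteed by Definition \ref{MVSPDef}: the groups $\{J_{il}\}$ form a non-overlapping partition of $J_i$; inside a complete group each half $J_{il}^{d}$, $J_{il}^{u}$ carries mass $\delta\le\sum\alpha_{ij}<(k_{1}+1)\delta$; and each row $i$ contains at most one incomplete group $J_{il_i}$, which necessarily satisfies $\sum_{j\in J_{il_i}}\alpha_{ij}\le C\delta$ (otherwise it could be completed). Treating $\sum\alpha_{ij}$ as a $1$-dimensional linear-growth measure concentrated on a neighbourhood of $E$, the near-group contribution is controlled by an integral of $1/|w-z|^{2}$ over $\D(z,t)$, of order $t/d$; the far complete-group contribution is controlled by an integral of $\delta^{2}/|w-z|^{3}$ over the complement of $\D(z,t)$, of order $\delta^{2}/t^{2}$; and the far incomplete-group contribution is bounded analogously using the $\le C\delta$ mass bound together with the one-per-row count.

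Adding these estimates yields a bound of the form $C\bigl(t/d+\delta^{a_{1}}/t^{a_{2}}\bigr)$, and the exponent $\tfrac{1}{5}$ on the right side of \eqref{FBounded3} will emerge by optimizing $t$ across this near/far trade-off, essentially at $t\asymp\delta^{4/5}d^{1/5}$, as in \cite[Lemma~2.7]{p95}. The main obstacle will be precisely this book-keeping: simultaneously exploiting the non-overlap of $\{J_{il}\}$, the mass-$\delta$ control on the halves of complete groups, and the at-most-one-incomplete-per-row property to pass cleanly from discrete sums over squares to integrals against a one-dimensional growth measure, and then tracking the extra factors coming from summation across rows $i$, which are ultimately responsible for the particular exponent $\tfrac{1}{5}$ in \eqref{FBounded3} rather than a cleaner value.
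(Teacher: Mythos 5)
Your disk-based near/far split does not match the geometry of the argument, and the bookkeeping you propose does not produce the exponent $\tfrac{1}{5}$. Every active disk $\D(c_{ij},k_1\delta)$ meets $E$, so all of them lie at distance at least $d-2k_1\delta$ from $z$; for the optimizer you name, $t\asymp\delta^{4/5}d^{1/5}\ll d$ once $\delta\ll d$, so the near class is empty and the split is vacuous. Even granting $t\gtrsim d$, the claimed bound $t/d$ for the near contribution is not what one gets by integrating $1/|w-z|^2$ against a linear-growth measure concentrated at distance $\gtrsim d$ from $z$, and in any event balancing $t/d$ against $\delta^2/t^2$ yields exponent $2/3$, not $1/5$. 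You acknowledge that the $1/5$ arises from ``extra factors'' in the row summation, but those factors are the entire content of the lemma and your outline does not supply them. Finally, the far-field decay of $\Psi_{J_{il}}$ is not of order $\delta^2/|z-\bar c_{il}|^3$: a complete group $J_{il}$ can occupy a tall vertical run whose diameter is much larger than $\delta$, so a single Laurent expansion about one center only gives a cubic term at that much larger scale; what Lemmas \ref{BasicEstimate} and \ref{BasicEstimate3} actually furnish is the square-by-square quadratic bound $|g_{J_{il}}|,|h_{J_{il}}|\le C L_{J_{il}}$, and extracting extra cancellation from $c_1(\Psi_{J_{il}})=c_2(\Psi_{J_{il}})=0$ requires the finer analysis of Paramonov.

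The proof in the paper follows \cite[Lemma 2.7]{p95} and classifies columns, not disks: with $N$ the integer part of $\frac{\mathrm{dist}(z,E)}{2\delta}$, the column indices $i$ are split according to $|i-\frac{Re(z)}{\delta}|\le N^{4/5}$ (near) or $>N^{4/5}$ (far). For near columns the $L_J$ bounds give $\sum_{j>N}\frac{1}{(i')^2+j^2}\lesssim \frac{1}{N}$ per column, hence $N^{4/5}\cdot N^{-1}=N^{-1/5}$ overall; for far columns the paper invokes the $|m|>4k_2$ case of Paramonov's proof, which supplies a $m^{-5/4}$ per-column decay, so $\sum_{m\ge N^{4/5}} m^{-5/4}\lesssim N^{-1/5}$; the incomplete groups (at most one per column) contribute $\lesssim N^{-1}$. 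The exponent $\tfrac{1}{5}$ comes from balancing $N^{4/5-1}$ against $(N^{4/5})^{-1/4}$ in this column-indexed, anisotropic split, which is lost under a radial cutoff. To convert your sketch into a proof you would have to reproduce this column-by-column analysis, and in particular the $m^{-5/4}$ estimate for far columns, which is the nontrivial cancellation step that your Laurent-expansion heuristic does not capture.
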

 
 \begin{proof} We just need to modify the proof in \cite{p95} slightly. We fix $z\in \C.$ Let $N$ be the largest integer that is smaller that $\frac{dist(z, E)}{2\delta}.$ If $N^\frac45 \le 3k_1 + 4s_2,$ then \eqref{FBounded3} is bounded by an absolute constant $C_{23}$ as showed in \cite{p95}.
 
 Assume that $N^\frac45 >  3k_1 + 4s_2.$ We make the following modifications
 \newline
 (1) For $J\in P$ with row index $i,$ we add $\alpha _{ij} > 0$ to the definitions of $J'(z)$ in \eqref{LIDef0} and $L_J(z)$ in the line after in \eqref{LIDef} ($I'(z)$ in \cite{p95}). By \eqref{LIDef}, $L_J(z) = L_J'(z).$ Let $j_J$ be the largest positive integer that is smaller that $\min_{j\in J'(z)} |j - \frac{Im(z)}{\delta}|.$ Then $j_J > N$ and $L_J(z) \le \frac{C_{25}}{(i - \frac{Re(z)}{\delta})^2 + j_J^2}.$
 \newline
 (2) For $P_2,$  we have 
 \[
 \ \sum_{J_{il_i}\in P_2} |g_{J_{il_i}}(z)| \le C_{26} \sum_i \dfrac{1}{(i - \frac{Re(z)}{\delta})^2 + j_{J_{il_i}}^2} \le \dfrac{C_{27}}{N}.
 \]
 \newline
 (3) For $P_1$ with $|i - \frac{Re(z)}{\delta}| \le N^\frac45,$ we have 
 \[
 \ \sum_{\underset{J_{il}\in P_1}{|i - \frac{Re(z)}{\delta}| \le N^\frac45}} |\Psi_{J_{il}}(z)| \le C_{28} \sum_{|i - \frac{Re(z)}{\delta}| \le N^\frac45} \sum_{j >N}\dfrac{1}{(i - \frac{Re(z)}{\delta})^2 + j^2} \le \dfrac{C_{29}}{N^\frac15}.
 \]
 \newline
 (4) For $P_1$ with $|i - \frac{Re(z)}{\delta}| > N^\frac45,$ we modify the case $|m| > 4k_2$ in \cite{p95} (replace $ 4k_2$ by $N^\frac45$) and get 
 \[
 \ \sum_{\underset{J_{il}\in P_1}{|i - \frac{Re(z)}{\delta}| > N^\frac45}} |\Psi_{J_{il}}(z)| \le C_{30} \sum_{m \ge N^\frac45} \dfrac{1}{m^{\frac54}} \le \dfrac{C_{31}}{N^\frac15}.
 \]
 The lemma is proved.
 \end{proof}

We now start our proofs for Theorem A and Theorem B.

\begin{lemma}\label{ImRhoOnto2}
If $U$ is an open subset of $\text{abpe}(\rtkmu)$ satisfying $\partial U \subset \mathcal F,~\gamma-a.a.,$ then, for $f\in H^\infty (U)$ with $\|f\|_U \le 1$ and $f(z) = 0$ for $z\in \C\setminus U,$ there exists $\tilde f\in R^{t,\i}(K,\mu)$ such that $\tilde f(z) = 0,~\mu |_{\overline U^c}-a.a.,$ $f(z)\mathcal C(g\mu)(z) = \mathcal C(\tilde fg\mu)(z),~ \area-a.a.$ for $g\perp R^t(K, \mu),$ and $\|\tilde f\|_{L^\infty(\mu)} \le C_{32}.$
\end{lemma}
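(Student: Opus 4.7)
The plan is to build $\tilde f$ via the modified Vitushkin approximation scheme of Paramonov (Lemmas \ref{BasicEstimate3} and \ref{BasicEstimate2}), using the local building blocks furnished by Lemma \ref{BBFRLambda}, and then to extract a weak-$*$ limit in $L^\infty(\mu)$. Extend $f$ by zero to all of $\C$; then $\bar\partial f$ is supported on $\partial U\subset\mathcal F$ up to a $\gamma$-null set, so choose a compact $E\subset\mathcal F$ with $\gamma(\partial U\setminus E)$ arbitrarily small and $\overline U^c\cap E=\emptyset$ up to this null set.

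\textbf{Construction of the approximants.} For a small $\delta>0$, take a smooth partition of unity $\{\varphi_{ij},S_{ij},\delta\}$ subordinated to $\{2S_{ij}\}$ and form the Vitushkin localizations $f_{ij}=T_{\varphi_{ij}}f$. These are bounded analytic off $E\cap\overline{2S_{ij}}\subset\mathcal F\cap\D(c_{ij},k_1\delta)$, with the standard moment bounds $\|f_{ij}\|_\infty\le C$ and $|c_n(f_{ij},c_{ij})|\le C\delta^{n-1}\alpha_{ij}$, where $\alpha_{ij}=\gamma(\D(c_{ij},k_1\delta)\cap E)$. After a routine adjustment, subtracting a multiple of the building block below, one obtains functions $g_{ij}$ meeting all clauses of Definition \ref{GApplicable}. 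For every $(i,j)$ with $\alpha_{ij}>0$, apply Lemma \ref{BBFRLambda} to a compact subset of $\mathcal F\cap\D(c_{ij},k_1\delta)$ of analytic capacity comparable to $\alpha_{ij}$ to produce $h_{ij}\in R^{t,\infty}(K,\mu)$, equal off a tiny compact set to the Cauchy transform of some $\eta_{ij}\in M_0^+$, satisfying the normalizations and moment estimates of Lemma \ref{BasicEstimate3}, together with the essential identity
\[
h_{ij}(z)\mathcal C(g\mu)(z)=\mathcal C(h_{ij}g\mu)(z), \quad \gamma\text{-a.e.\ off }\operatorname{supp}\eta_{ij},\ \text{for every }g\perp R^t(K,\mu)
\]
(the extension from $\Lambda$ to arbitrary $g\perp R^t(K,\mu)$ using Theorem \ref{FLambdaForW} and density). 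Lemma \ref{BasicEstimate3} then assembles, within each complete group $J_{il}\in P_1$, a combination $h_{J_{il}}$ with $c_1(h_{J_{il}})=0$ and $c_2(h_{J_{il}})=c_2(g_{J_{il}})$. Setting
\[
\tilde F_\delta:=\sum_{J_{il}\in P_1} h_{J_{il}} + (\text{the } R^{t,\infty}\text{ correction from the adjustment})\in R^{t,\infty}(K,\mu),
\]
Lemma \ref{BasicEstimate2} delivers the decisive bound
\[
|f(z)-\tilde F_\delta(z)|\le C_{23}\min\Bigl(1,\frac{\delta}{\operatorname{dist}(z,E)}\Bigr)^{1/5},
\]
which yields a $\delta$-independent bound $\|\tilde F_\delta\|_{L^\infty(\mu)}\le C_{32}$ and locally uniform convergence $\tilde F_\delta\to f$ on $\C\setminus E$.

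\textbf{Passage to the limit and verification.} Let $\delta_k\to 0$ and extract a weak-$*$ convergent subsequence $\tilde F_{\delta_k}\to\tilde f$ in $L^\infty(\mu)$. Since $R^{t,\infty}(K,\mu)$ is weak-$*$ closed (as $R^t(K,\mu)$ is $L^t$-closed), the limit satisfies $\tilde f\in R^{t,\infty}(K,\mu)$ with $\|\tilde f\|_{L^\infty(\mu)}\le C_{32}$. On $\overline U^c$ we have $\operatorname{dist}(z,E)>0$, so $\tilde F_{\delta_k}\to 0$ locally uniformly, which combined with weak-$*$ convergence forces $\tilde f=0$ $\mu|_{\overline U^c}$-a.e. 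For the Cauchy-transform identity, summing the per-block identity over all groups gives $\tilde F_{\delta_k}(z)\mathcal C(g\mu)(z)=\mathcal C(\tilde F_{\delta_k}g\mu)(z)$ $\gamma$-a.e.\ off $E$; for $\area$-a.e.\ $z$ the kernel $w\mapsto g(w)/(w-z)$ lies in $L^1(\mu)$, so weak-$*$ convergence of $\tilde F_{\delta_k}$ yields $\mathcal C(\tilde F_{\delta_k}g\mu)(z)\to\mathcal C(\tilde f g\mu)(z)$, while the left side tends to $f(z)\mathcal C(g\mu)(z)$ by the pointwise control. The main obstacle is organizational: one must verify that the building blocks from Lemma \ref{BBFRLambda} satisfy the precise moment and boundedness hypotheses of Lemma \ref{BasicEstimate3}, and that the assembly preserves a $\delta$-independent bound in $L^\infty(\mu)$; the $1/5$-power decay in Lemma \ref{BasicEstimate2} is precisely what secures this uniform control and powers the weak-$*$ argument.
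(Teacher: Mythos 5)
Your proposal follows essentially the same route as the paper: set $E=\partial U$ (or a nearly full compact subset of $\partial U\cap\mathcal F$), form the Vitushkin localizations $f_{ij}=T_{\varphi_{ij}}f$, correct the first moment using the building blocks from Lemma \ref{BBFRLambda}, assemble the second-moment corrections groupwise via Lemma \ref{BasicEstimate3}, invoke Lemma \ref{BasicEstimate2} for the uniform $\min(1,\delta/\mathrm{dist}(z,\partial U))^{1/5}$ bound, and pass to a weak-$*$ limit in $L^\infty(\mu)$. The paper writes the approximant $f_\delta$ as the sum of the building-block parts and complete-group corrections and is terse about the verification that $\tilde f=0$ on $\overline U^c$, which you spell out correctly via the locally uniform convergence off $\partial U$; you also route the extension of the Cauchy-transform identity from $\Lambda$ to arbitrary $g\perp R^t(K,\mu)$ through Theorem \ref{FLambdaForW}, whereas the paper cites \eqref{BBFRLambdaEq1} together with Corollary \ref{acZero} — both rest on the fact that $\frac{f_\delta(z)-f_\delta(\lambda)}{z-\lambda}\in R^{t,\infty}(K,\mu)$ for $\lambda$ off the building-block supports. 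These are cosmetic differences; the substance is the same.
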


\begin{proof}
Let $k_1 = 3$ and $E=\partial U.$ Let $\alpha_{ij}$ be as in \eqref{ThetaIJ}. Let $P,$ $P_1,$ and $P_2$ be as in Definition \ref{MVSPDef}.
We fix $z\in \C\setminus \partial U.$
Then 
\begin{eqnarray}\label{ImRhoOntoEq1}
 \ f = \sum_{2S_{ij}\cap \partial \Omega \ne \emptyset} (f_{ij}:= T_{\varphi_{ij}}f). 
 \end{eqnarray}
 The function $f_{ij}$ is analytic on $\C_\i \setminus (\overline{\D(c_{ij}, \sqrt {2}  \delta)}\cap \partial U)$ and therefore, 
 \begin{eqnarray}\label{ImRhoOnto2Eq1} 
\begin{aligned}
\ |c_n (f_{ij}, c_{ij})| = &\dfrac{1}{\pi}\left | \int f(z) (z - c_{ij})^{n-1}\bar \partial \varphi_{ij} (z) d\area (z)\right | \\
\ \le & \|T_{\varphi_{ij}} ((z - c_{ij})^{n-1}f)\| \gamma (\overline{\D(c_{ij}, \sqrt {2}  \delta)} \cap \partial U). \\
\ \le & C_{33} \delta ^{n-1}\alpha_{ij}.
\end{aligned}
\end{eqnarray}

 Let $g_{ij}^0$ be as in Lemma \ref{BBFRLambda} for a compact subset $E_1 \subset \D(c_{ij}, k_1 \delta) \cap \partial U \subset \mathcal F$ with $\gamma(E_1) \ge \frac 12 \alpha_{ij}$ when $\alpha_{ij} > 0.$
Let $g_{ij}^1 = - \frac{c_1 (f_{ij})}{\gamma(E_1)}g_{ij}^0,$ $g_{ij} = f_{ij} - g_{ij}^1,$ and $h_{ij} = - \frac {\alpha_{ij}}{\gamma(E_1)}g_{ij}^0.$ Clearly, by \eqref{ImRhoOnto2Eq1}, we see that $\{g_{ij}\}$ is applicable to $P$ (see Definition \ref{GApplicable}) and $\{h_{ij}\}$ satisfies the assumptions in Lemma \ref{BasicEstimate3}. The function $g_{J_{il}}$ defined in \eqref{GIDefinition} satisfies  the properties in Lemma \ref{BasicEstimate}. Let $h_{J_{il}}$ be as in Lemma \ref{BasicEstimate3}.

We rewrite \eqref{ImRhoOntoEq1} as the following
 \[
 \ f = \sum_{J_{il}\in P_1} (g_{J_{il}} - h_{J_{il}}) + \sum_{J_{il_i} \in P_2} g_{J_{il_i}} + f_{\delta}
 \]
where
 \begin{eqnarray}\label{FDeltaSOB}
 \ f_{\delta} = \sum_{J_{il}\in P} \sum_{j\in J_{il}} g_{ij}^0 + \sum_{J_{il}\in P_1} h_{J_{il}} \in R^{t,\i}(K,\mu).
 \end{eqnarray}
 From \eqref{FBounded3}, we get
 \[
 \ |f(z) - f_{\delta}(z)| \le C_{23}\min \left (1, \dfrac{\delta}{dist(z, \partial U)} \right )^\frac15.
 \]  

Therefore, $f_{\delta} (z) \rightarrow f(z)$ uniformly on compact subsets of $\mathbb C_\i \setminus \partial U$ and $\|f_{\delta}\|_{L^\infty(\mu)} \le C_{32}.$There exists a subsequence $\{f_{\delta_n}\}$ such that $f_{\delta_n} (z) \rightarrow \tilde f(z)$ in $L^\infty(\mu)$ weak-star topology. Hence, $\|\tilde f\|_{L^\infty(\mu)} \le C_{32},$ $\tilde f\in R^{t,\i}(K,\mu),$ and $\mathcal C(f_{\delta_n} g\mu)(z) \rightarrow \mathcal C(\tilde fg\mu)(z), ~\area-a.a..$ From \eqref{BBFRLambdaEq1} and Corollary \ref{acZero}, we infer that
\[
\ f_{\delta_n}(z)\mathcal C(g\mu)(z) = \mathcal C(f_{\delta_n}g\mu)(z), ~\area-a.a..
\]
Thus, $f(z)\mathcal C(g\mu)(z) = \mathcal C(\tilde fg\mu)(z), ~\area-a.a.$ for $g\perp R^t(K,\mu).$ 
\end{proof}

\begin{proof} (Theorem A)
There exists a partition $\{\Delta_{00}, \Delta_{01}\}$ of $\text{spt}\mu$ such that
\[
 \ R^t(K, \mu ) = L^t(\mu |_{\Delta_{00}})\oplus R^t(K, \mu |_{\Delta_{01}})
 \]
 and $S_{\mu |_{\Delta_{01}}}$ is pure. Let $\mathcal F_{\Delta_{01}}$ be the non-removable boundary for $R^t(K, \mu |_{\Delta_{01}}).$ If $g\perp R^t(K, \mu ),$ then $g(z) = 0,~ \mu |_{\Delta_{00}}-a.a..$ Hence, by Theorem \ref{FCharacterization}, we see that $\mathcal F_{\Delta_{01}} \approx \mathcal F,~\gamma-a.a..$
 
  Let $\D(\lambda_0, \delta) \subset \text{abpe}(R^t(K, \mu ))$ such that for $\lambda\in \D(\lambda_0, \delta)$ and $r\in \text{Rat}(K),$ $|r(\lambda)| \le M \|r\|_{L^t(\mu)}$ for some $M > 0$ and $r(\lambda) = (r,k_\lambda),$  where $k_\lambda \in L^s(\mu)$ and $\|k_\lambda\|_{L^s(\mu)}\le M.$ Because $(z-\lambda)\bar k_\lambda\perp R^t(K, \mu ),$ we get $k_\lambda(z) = 0,~ \mu |_{\Delta_{00}}-a.a.$ if $\mu(\{\lambda\}) = 0.$ Let $\{\lambda_n\}$ be the set of atoms for $\mu.$ Then $|r(\lambda)| \le M \|r\|_{L^t(\mu|_{\Delta_{01}})}$ for $\lambda\in \D(\lambda_0, \delta) \setminus \{\lambda_n\}.$ Now for $|\lambda - \lambda_0| < \frac{\delta}{2},$ we have
  \[
  \ |r(\lambda)| \le \dfrac{C_{35}}{\pi\delta^2} \int_{\D(\lambda_0, \delta) \setminus \{\lambda_n\}} |r(z)| dm \le  C_{35} M \|r\|_{L^t(\mu|_{\Delta_{01}})}.
  \]
  Thus, $\lambda_0 \in  \text{abpe}(R^t(K, \mu|_{\Delta_{01}}))$ which implies $\text{abpe}(R^t(K, \mu|_{\Delta_{01}})) = \text{abpe}(R^t(K, \mu).$
  
  Therefore, the assumption of Theorem A holds for $R^t(K, \mu |_{\Delta_{01}})$ and we may assume that $S_\mu$ is pure in the following proof.

From \eqref{ABPETheoremRTEq1}, we see that $\partial U_i \cap \text{int}(K) \subset \mathcal F,~\gamma-a.a..$ Hence, the assumption $\partial U_i \cap \partial  K \subset \mathcal F,~\gamma-a.a.$ implies $\partial U_i \subset \mathcal F,~\gamma-a.a..$ 

Let $f_i = \chi_{U_i}.$ By Lemma \ref{ImRhoOnto2}, there exists $\tilde f_i \in R^{t,\i}(K,\mu)$ such that $\tilde f_i(z) = 0, ~ z\in \overline U_i^c$ and $f_i(z) \CT(g\mu)(z) = \CT(\tilde f_ig\mu)(z),~\area-a.a.$ for $g\perp \rtkmu.$ Thus, for $g\perp \rtkmu,$
\[
\ \CT(\tilde f_ig\mu)(z) = f_i(z)(f_i(z) \CT(g\mu)(z)) = f_i(z) \CT(\tilde f_ig\mu)(z) = \CT(\tilde f_i^2g\mu)(z),~\area-a.a.
\]
since $\tilde f_ig\perp \rtkmu.$ So, since $S_\mu$ is pure, we have
\[
\ \tilde f_i(z) = \tilde f_i^2(z),~\mu-a.a..
\]
Hence, there exists a Borel subset $\Delta_i$ with $U_i \subset \Delta_i \subset \overline U_i$ such that $\tilde f_i = \chi_{\Delta_i}.$ So
\begin{eqnarray}\label{TheoremA12Eq1}
\ \CT(\chi_{\Delta_i}g\mu)(z) = \chi_{U_i}(z)\CT(g\mu)(z),~\area-a.a., \text{ for } i \ge 1.
\end{eqnarray}
Therefore,
\[
\ \CT(\chi_{\Delta_i}\chi_{\Delta_m}g\mu)(z) = \chi_{U_i}(z)\chi_{U_m}(z)\CT(g\mu)(z) = 0,~\area-a.a.,
\]
for $i,m \ge 1$ and $i\ne m,$ which implies $\Delta_i \cap \Delta_m = \emptyset,~\mu-a.a.$ as $S_\mu$ is pure. Set $\Delta_0 = K\setminus \cup_{i=1}^\i \Delta_i.$ Then
\[
 \ R^t(K, \mu ) = R^t(K, \mu |_{\Delta_0})\oplus \bigoplus _{i = 1}^\infty R^t(K, \mu |_{\Delta_i}).
 \]
 
 For $i \ge 1,$ it is clear that $R^t(K, \mu |_{\Delta_i}) = R^t(\overline{U_i}, \mu |_{\Delta_i})$ and so (c) holds.
 
 Let $K_0$ be the spectrum of $S_{\mu |_{\Delta_0}}.$ 
 Then $R^t(K_0, \mu |_{\Delta_0}) = R^t(K, \mu |_{\Delta_0})$ and for $i \ge 1,$ $K_0 \cap U_i = \emptyset$ since, by \eqref{TheoremA12Eq1},
 \[
 \ \CT(\chi_{\Delta_0}g\mu)(z) = \CT(g\mu)(z) - \sum_{m=1}^\i \CT(\chi_{\Delta_m}g\mu)(z) = 0,~ \area|_{U_i}-a.a..
 \]
 Therefore, $\text{abpe}(R^t(K_0, \mu |_{\Delta_0})) = \emptyset.$ This proves (a).
 
For $f\in R^{t,\i}(K, \mu |_{\Delta_i})$ and $\lambda \in U_i,$ we have $\frac{f(z) - \rho_i(f)(\lambda)}{z - \lambda}\in R^{t,\i}(K, \mu |_{\Delta_i}).$ Hence, by Corollary \ref{acZero}, $\CT(fg\mu)(\lambda) = \rho_i(f)(\lambda)\CT(g\mu)((\lambda)),~\area-a.a.$ for $g\perp R^t(K, \mu |_{\Delta_i}).$ As $S_{\mu_{\Delta_i}}$ is pure, we conclude that $\rho_i$ is injective and 
\[
\ \rho_i(f_1f_2) = \rho_i(f_1)\rho_i(f_2),~f_1,f_2\in R^{t,\i}(K, \mu |_{\Delta_i}). 
\]

(b): If $B \subset \Delta_i$ and $\chi_B \in R^{t,\i}(K, \mu |_{\Delta_i})$ is a non-zero characteristic function. Then $(\rho_i(\chi_B))^2 = \rho_i(\chi_B),$ which implies $\rho_i(\chi_B) = \rho_i(\chi_{\Delta_i}) = 1$ since $U_i$ is connected. Hence, $\chi_B = \chi_{\Delta_i}$ as $\rho_i$ is injective. So $S_{\mu |_{\Delta_i}}$ is irreducible.

(d): For $f\in  R^{t,\i}(K, \mu |_{\Delta_i})$ and $\lambda\in U_i,$ there is a constant $C_\lambda > 0$ such that
\[
\ |\rho_i(f)(\lambda)| \le C_\lambda^{\frac 1n} \|f^n\|_{L^t(\mu |_{\Delta_i})}^{\frac 1n}. 
\]
Taking $n\rightarrow \infty,$ we get $\|\rho_i(f)\|_{U_i}\le \|f\|_{L^\infty(\mu |_{\Delta_i})}.$ 
From Lemma \ref{ImRhoOnto2}, we see $\rho_i$ is surjective. Moreover, $\|f^n\|_{L^\infty(\mu |_{\Delta_i})} ^{\frac 1n}\le C_{32} ^{\frac 1n} \|\rho_i(f^n)\|_{U_i}^{\frac 1n}.$ 
Letting 
$n\rightarrow \infty,$ we get $\|f\|_{L^\infty(\mu |_{\Delta_i})} \le \|\rho_i(f)\|_{U_i}.$ Thus,
\[
\ \|\rho_i(f)\|_{U_i} = \|f\|_{L^\infty(\mu |_{\Delta_i})},~ f\in \in R^{t,\i}(K_i,\mu |_{\Delta_i}).
\]
Therefore, $\rho_i$ is a bijective isomorphism between two Banach algebras $R^{t,\i}(K_i, \mu |_{\Delta_i})$ and $H^\infty (U_i)$. Clearly $\rho_i$ is also weak-star sequentially continuous, so an application of the Krein-Smulian Theorem shows that $\rho_i$ is a weak-star homeomorphism.

 \end{proof}
 
 \begin{proof} (Theorem B)
 Let $\text{abpe}(R^t(K, \mu)) = \cup_{i=1}^\i U_i,$ where $U_i$ is a connected component.
 By the assumption and Proposition \ref{RTFRProp2}, 
 \[
 \ \partial U_i \cap \partial  K \subset \partial_1  K \subset \mathcal F,~\gamma-a.a.. 
 \]
 Therefore, by Theorem A, we see that the decomposition in Theorem A holds. We only need to show that 
 \[
 \ R^t(K, \mu |_{\Delta_0}) = L^t(\mu |_{\Delta_0}).
 \]	
 In fact, from \eqref{ABPETheoremRTEq1}, if $\mathcal F_{\Delta_0}$ is the non-removable boundary for $R^t(K, \mu |_{\Delta_0}),$ then 
 \[
 \ \text{int}(K) \setminus \mathcal F_{\Delta_0} \approx \text{abpe}(R^t(K, \mu |_{\Delta_0})) = \emptyset,~ \gamma-a.a..
 \]
 Hence, $\mathcal F_{\Delta_0} = \C,~\gamma-a.a.$ since $\partial  K = \partial_1  K \subset \mathcal F_{\Delta_0},~\gamma-a.a.$ by Proposition \ref{RTFRProp2}. The proof follows from Corollary \ref{acZero}.
 \end{proof}

\bigskip

{\bf Acknowledgments.} 
The authors would like to thank Professor John M\raise.45ex\hbox{c}Carthy for carefully reading through the manuscript and providing many useful comments.
\bigskip

\bibliographystyle{amsplain}

\end{document}